

\documentclass[12pt,a4paper]{amsart}
\usepackage{amsmath,amstext,amssymb,amsthm,amsfonts,bbold}
\newcommand{\nc}{\newcommand}

\nc{\one}{\mbox{\bf 1}}
\nc{\invtensor}{\underset{\leftarrow}{\otimes}}
\nc{\const}{\operatorname{const}}

\nc{\ad}{\operatorname{ad}}
\nc{\tr}{\operatorname{tr}}

\nc{\tp}{\operatorname{top}}
\nc{\rank}{\operatorname{rank}}
\nc{\corank}{\operatorname{corank}}
\nc{\codim}{\operatorname{codim}}
\nc{\sdim}{\operatorname{sdim}}
\nc{\mult}{\operatorname{mult}}

\nc{\spn}{\operatorname{span}}
\nc{\Sym}{\operatorname{Sym}}
\nc{\sym}{\operatorname{sym}}
\nc{\id}{\operatorname{id}}
\nc{\Id}{\operatorname{Id}}
\nc{\Ree}{\operatorname{Re}}

\nc{\htt}{\operatorname{ht}}
\nc{\sch}{\operatorname{sch}}
\nc{\str}{\operatorname{str}}

\nc{\Ker}{\operatorname{Ker}}
\nc{\rker}{\operatorname{rKer}}
\nc{\im}{\operatorname{Im}}
\nc{\osp}{\mathfrak{osp}}
\nc{\sgn}{\operatorname{sgn}}
\nc{\F}{\operatorname{F}}
\nc{\Mod}{\operatorname{Mod}}
\nc{\Mat}{\operatorname{Mat}}
\nc{\Soc}{\operatorname{Soc}}
\nc{\Inj}{\operatorname{Inj}}
\nc{\Hom}{\operatorname{Hom}}
\nc{\End}{\operatorname{End}}
\nc{\supp}{\operatorname{supp}}
\nc{\Card}{\operatorname{Card}}
\nc{\Ann}{\operatorname{Ann}}
\nc{\Ind}{\operatorname{Ind}}
\nc{\Coind}{\operatorname{Coind}}
\nc{\wt}{\operatorname{wt}}
\nc{\ch}{\operatorname{ch}}
\nc{\Stab}{\operatorname{Stab}}
\nc{\Sch}{{\mathcal S}\mbox{\em ch}}
\nc{\Irr}{\operatorname{Irr}}
\nc{\Spec}{\operatorname{Spec}}
\nc{\Prim}{\operatorname{Prim}}
\nc{\Aut}{\operatorname{Aut}}
\nc{\Ext}{\operatorname{Ext}}

\nc{\Fract}{\operatorname{Fract}}
\nc{\gr}{\operatorname{gr}}
\nc{\deff}{\operatorname{def}}
\nc{\HC}{\operatorname{HC}}

\nc{\red}{\operatorname{red}}

\nc{\wdchi}{\widetilde{\chi}}
\nc{\wdH}{\widetilde{H}}
\nc{\wdN}{\widetilde{N}}
\nc{\wdM}{\widetilde{M}}
\nc{\wdO}{\widetilde{O}}
\nc{\wdR}{\widetilde{R}}
\nc{\wdS}{\widetilde{S}}
\nc{\wdV}{\widetilde{V}}

\nc{\wdC}{\widetilde{C}}

\nc{\Obj}{\operatorname{Obj}}
\nc{\Dglie}{\operatorname{{\mathcal D}glie}}
\nc{\Fin}{\operatorname{{\mathcal F}in}}

\nc{\Adm}{\operatorname{\mathcal{A}dm}}

\nc{\Sg}{{\cS(\fg)}}
\nc{\Shg}{{\cS(\fhg)}}
\nc{\Ug}{{\cU(\fg)}}
\nc{\Uhg}{{\cU(\fhg)}}
\nc{\Sh}{{\cS(\fh)}}
\nc{\Uh}{{\cU(\fh)}}
\nc{\Uhh}{{\cU(\fhh)}}
\nc{\Zg}{{{\mathcal{Z}}(\fg)}}

\nc{\Vir}{{\mathcal{V}ir}}
\nc{\NS}{{\mathcal{N}S}}

\nc{\tZg}{{\widetilde{\mathcal Z}({\mathfrak g})}}
\nc{\Zk}{{\mathcal Z}({\mathfrak k})}

\nc{\Up}{{\mathcal U}({\mathfrak p})}
\nc{\Ah}{{\mathcal A}({\mathfrak h})}
\nc{\Ag}{{\mathcal A}({\mathfrak g})}
\nc{\Ap}{{\mathcal A}({\mathfrak p})}
\nc{\Zp}{{\mathcal Z}({\mathfrak p})}
\nc{\cR}{\mathcal R}
\nc{\cS}{\mathcal S}
\nc{\cT}{\mathcal{T}}
\nc{\cY}{\mathcal Y}
\nc{\cA}{\mathcal A}
\nc{\cU}{\mathcal U}
\nc{\cH}{\mathcal H}
\nc{\cM}{\mathcal M}
\nc{\cL}{\mathcal L}
\nc{\cF}{\mathcal F}
\nc{\fg}{\mathfrak g}

\nc{\fo}{\mathfrak o}

\nc{\CO}{\mathcal O}
\nc{\CR}{\mathcal R}
\nc{\Cl}{\mathcal {C}\ell}

\nc{\cW}{\mathcal{W}}
\nc{\bM}{\mathbf{M}}
\nc{\bL}{\mathbf{L}}
\nc{\bN}{\mathbf{N}}

\nc{\zq}{\mathpzc q}

\nc{\fl}{\mathfrak l}
\nc{\fn}{\mathfrak n}
\nc{\fm}{\mathfrak m}
\nc{\fp}{\mathfrak p}
\nc{\fh}{\mathfrak h}
\nc{\ft}{\mathfrak t}
\nc{\fk}{\mathfrak k}
\nc{\fb}{\mathfrak b}
\nc{\fs}{\mathfrak s}
\nc{\fB}{\mathfrak B}

\nc{\vareps}{\varepsilon}
\nc{\varesp}{\varepsilon}
\nc{\veps}{\varepsilon}

\nc{\fsl}{\mathfrak{sl}}
\nc{\fgl}{\mathfrak{gl}}
\nc{\fso}{\mathfrak{so}}

\nc{\fpq}{\mathfrak{pq}}
\nc{\fq}{\mathfrak q}
\nc{\fsq}{\mathfrak{sq}}
\nc{\fpsq}{\mathfrak{psq}}


\nc{\fhg}{\hat{\fg}}
\nc{\fhn}{\hat{\fn}}
\nc{\fhh}{\hat{\fh}}
\nc{\fhb}{\hat{\fb}}
\nc{\hrho}{\hat{\rho}}

\nc{\hsl}{\hat{\fsl}}
\nc{\fpo}{\mathfrak{po}}
\nc{\dirlim}{\underset{\rightarrow}{\lim}\,}
\nc{\nen}{\newenvironment}
\nc{\ol}{\overline}
\nc{\ul}{\underline}
\nc{\ra}{\rightarrow}
\nc{\lra}{\longrightarrow}
\nc{\Lra}{\Longrightarrow}
\nc{\bo}{\bar{1}}
\nc{\Lla}{\Longleftarrow}

\nc{\Llra}{\Longleftrightarrow}

\nc{\thla}{\twoheadleftarrow}

\nc{\lang}{(}
\nc{\rang}{)}

\nc{\hra}{\hookrightarrow}

\nc{\iso}{\overset{\sim}{\lra}}

\nc{\ssubset}{\underset{\not=}{\subset}}

\nc{\vac}{|0\rang}

\nc{\Thm}[1]{Theorem~\ref{#1}}
\nc{\Prop}[1]{Proposition~\ref{#1}}
\nc{\Lem}[1]{Lemma~\ref{#1}}
\nc{\Cor}[1]{Corollary~\ref{#1}}
\nc{\Conj}[1]{Conjecture~\ref{#1}}
\nc{\Claim}[1]{Claim~\ref{#1}}
\nc{\Defn}[1]{Definition~\ref{#1}}
\nc{\Exa}[1]{Example~\ref{#1}}
\nc{\Rem}[1]{Remark~\ref{#1}}
\nc{\Note}[1]{Note~\ref{#1}}
\nc{\Quest}[1]{Question~\ref{#1}}
\nc{\Hyp}[1]{Hypoth\`ese~\ref{#1}}
\nen{thm}[1]{\label{#1}{\bf Theorem.\ } \em}{}
\nen{prop}[1]{\label{#1}{\bf Proposition.\ } \em}{}
\nen{lem}[1]{\label{#1}{\bf Lemma.\ } \em}{}
\nen{cor}[1]{\label{#1}{\bf Corollary.\ } \em}{}
\nen{conj}[1]{\label{#1}{\bf Conjecture.\ } \em}{}

\nen{claim}[1]{\label{#1}{\bf Claim.\ } \em}{}

\nen{defn}[1]{\label{#1}{\bf Definition.\ } }{}
\nen{exa}[1]{\label{#1}{\bf Example.\ } }{}
\nen{rem}[1]{\label{#1}{\em Remark.\ } }{}
\nen{note}[1]{\label{#1}{\em Note.\ } }{}
\nen{exer}[1]{\label{#1}{\em Exercise.\ } }{}
\nen{sket}[1]{\label{#1}{\em Sketch of proof.\ } }{}
\nen{quest}[1]{\label{#1}{\bf Question.\ } \em}{}

\nen{hyp}[1]{\label{#1}{\bf Hypoth\`ese.\ } \em}{}
\setlength{\unitlength}{0.8cm}
\setlength{\baselineskip}{18pt}
\setlength{\parskip}{6pt}
\setlength{\textwidth}{16cm}
\setlength{\textheight}{21cm}
\setlength{\oddsidemargin}{0.1in}
\setlength{\evensidemargin}{0.1in}
\setlength{\headheight}{30pt}
\setlength{\headsep}{40pt}
\setlength{\topmargin}{-60pt}
\setlength{\marginparwidth}{0pt}

\begin{document}

\setcounter{section}{0}
\setcounter{tocdepth}{1}
\date{December 2014}
\title[Affine generalized root systems]
{On the correspondence of Affine generalized root systems and symmetrizable affine Kac-Moody superalgebras}

\author{Ary Shaviv}

\address[]{Dept. of Mathematics, The Weizmann Institute of Science,
Rehovot 76100, Israel}
\email{ary.shaviv@weizmann.ac.il}

\maketitle

\begin{center}
\textsc{ Abstract}
\end{center}
Generalized root systems (GRS), that were introduced by V. Serganova, are a generalization of finite root systems (RS). We define a generalization of affine root systems (ARS), which we call {\em affine generalized root systems} (AGRS). The set of real roots of almost every symmetrizable affine indecomposable Kac-Moody superalgebra is an irreducible AGRS. In this paper we classify all AGRSs and show that almost every irreducible AGRS is the set of real roots of a symmetrizable affine indecomposable Kac-Moody superalgebra.


\tableofcontents

\newpage
\section{Introduction}
Towards the end of the $19^{th}$ century E.J. Cartan and W. Killing (see \cite{C1} and \cite{Killing1}) classified all finite dimensional simple complex Lie algebras, and showed their correspondence to finite root systems in Euclidean spaces. In the beginning of the 60's of the previous century J.P. Serre showed (see \cite{Serre1}) that any finite root system in a complex space with a non-degenerate bilinear form may be embedded in a Euclidean space, i.e. may be realized as a finite root system in a real space with a positive definite bilinear form. This realization explained why Euclidean root systems exhaust all finite dimensional simple {\em complex} Lie algebras.

\par In 1967 a full classification of Kac-Moody Lie algebras of finite and affine type was done independently by V.G. Kac and R.V. Moody (see \cite{K2} and \cite{moody}), by analyzing the Cartan matrices corresponding to the algebras. In the beginning of the 70's I.G. Macdonald classified all affine root systems (see \cite{M1}). Later in the 70's Kac realized Macdonald's affine root systems as the sets of real roots of the affine Kac-Moody algebras.

\par Kac also classified in the 70's all finite dimensional simple Lie superalgebras over an algebraically closed field of characteristic zero (see \cite{K1}). These include basic classical Lie superalgebras, which are finite dimensional simple Lie superalgebras $\fg=\fg_{\ol{0}}\oplus\fg_{\ol{1}}$ such that $\fg_{\ol{0}}$ is reductive and $\fg$ admits a consistent (i.e. even), non-degenerate, invariant, symmetric bilinear form. Approximately 20 years later, in 1996, V. Serganova classified generalized root systems (see \cite{VGRS}) and showed a correspondence between these systems and the basic classical Lie superalgebras.

\par In this paper we introduce affine generalized root systems (AGRSs) and show their correspondence to the symmetrizable affine Kac-Moody superalgebras.

\par The set of real roots of any indecomposable symmetrizable affine Kac-Moody superalgebra other than $gl(n|n)^{(1)},~n\geq1$ forms an irreducible AGRS. One might ask the converse question: is it true that an irreducible AGRS is (up to isomorphism) the set of real roots of an indecomposable symmetrizable affine Kac-Moody superalgebra?

\par We prove that the answer is no, but that there are only three exceptions: one family of finite AGRSs that are the root systems of the Kac-Moody superalgebras $gl(n|n),n\geq2$, all rational quotients of the set of real roots of the affine Kac-Moody superalgebra $gl(2|2)^{(1)}$, and all infinite quotients of the set of real roots of the affine Kac-Moody superalgebras $gl(n|n)^{(1)},~n\geq3$ (see \ref{quodef} and \ref{quodef1} below). Namely, we prove Theorem \ref{thm1}:

\subsection{}\label{thm1}
\begin{thm}{thm1}
The set of all irreducible AGRSs is a disjoint union of the following sets:
\par (1) The sets of real roots of all indecomposable symmetrizable affine Kac-Moody superalgebras other than $gl(n|n)^{(1)},n\geq1$.
\par (2) The sets of roots of the Kac-Moody superalgebras $gl(n|n),~n\geq2$.
\par (3) The rational quotients of the set of real roots of the indecomposable symmetrizable affine Kac-Moody superalgebra $gl(2|2)^{(1)}$.
\par (4) The infinite quotients of the sets of real roots of all indecomposable symmetrizable affine Kac-Moody superalgebras of the form $gl(n|n)^{(1)},~n\geq3$.
\end{thm}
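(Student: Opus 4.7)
My plan is to adapt Macdonald's strategy for classifying affine root systems to the generalized (super) setting: first reduce to Serganova's classification of finite GRSs via a ``gradient'' projection, then classify the affine extensions of each finite GRS, and finally single out the $gl(n|n)^{(1)}$ line where the picture degenerates. The easy direction --- that each of the four listed families consists of irreducible AGRSs --- is essentially structural. For (1) this amounts to verifying the AGRS axioms on the set of real roots of a symmetrizable affine Kac-Moody superalgebra, which is where the symmetrizability hypothesis enters, supplying the nondegenerate invariant form required by an AGRS. For (2)--(4) one verifies directly that the finite root system of $gl(n|n)$, together with the rational and infinite quotients of the real-root system of $gl(n|n)^{(1)}$ defined in \ref{quodef} and \ref{quodef1}, satisfy the AGRS axioms; the crucial point is that passing to the appropriate quotient restores the nondegeneracy of the induced form, which is precisely the obstruction that removes $gl(n|n)^{(1)}$ itself from class (1).

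For the forward direction, let $R\subset V$ be an arbitrary irreducible AGRS with bilinear form $B$. Irreducibility and the AGRS axioms should force $\ker B$ to be spanned by finitely many ``imaginary'' vectors $\delta_i$; projecting $R$ to $\bar V:=V/\ker B$ then produces a finite subset $\bar R\subset\bar V$ which I expect to satisfy the axioms of an irreducible finite GRS in Serganova's sense. Her classification pins down $\bar R$ up to isomorphism as one of the classical Lie-algebra root systems, the orthosymplectic families, the exceptional superalgebra root systems, or the degenerate family attached to $gl(n|n)$. One then reconstructs $R$ orbit by orbit: for each Weyl-group orbit of roots $\alpha\in\bar R$ one classifies, in the spirit of Macdonald, the compatible $\ZZ$-valued affine extensions $\alpha+k\delta_i$ consistent with the AGRS reflection axioms, matching the result against Kac's and Moody's Cartan-matrix classification of affine Kac-Moody superalgebras. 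Outside the $gl(n|n)$ line this case analysis produces exactly the real-root system of one symmetrizable affine Kac-Moody superalgebra, recovering family (1).

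The main obstacle is the $gl(n|n)$ line, because there the finite invariant form is already degenerate, so after affinization $\ker B$ acquires an extra dimension and the naive real-root set fails some AGRS axiom. I will handle this by enumerating all sublattices $L\subset\ker B$ such that $R/L$ remains irreducible and satisfies the AGRS axioms: collapsing the full imaginary direction recovers the finite $gl(n|n)$ and yields (2); quotienting by a rational sublattice when $n=2$ yields (3); and quotienting by an infinite sublattice when $n\geq 3$ yields (4). Disjointness of the four classes then falls out of invariants attached to $R$: finiteness separates (2) from the rest; the rank of $\ker B$ (together with whether the underlying superalgebra has degenerate invariant form) separates (1) from (3) and (4); and (3) is distinguished from (4) by the index of the collapsing sublattice in the original imaginary direction, i.e. by whether the quotient is rational or infinite in the sense of \ref{quodef}--\ref{quodef1}.
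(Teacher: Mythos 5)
Your broad strategy --- project to the finite quotient, invoke Serganova, then reconstruct the affine extensions orbit by orbit in Macdonald's style --- is indeed the paper's strategy, but your reduction step fails in a way that loses entire families. The image $cl(R)$ of an irreducible AGRS in $V/\ker(-,-)$ is in general only a \emph{weak} GRS (\Prop{prop1}): condition (2) can degenerate to (2') because $\beta+\alpha$ and $\beta-\alpha$ may both appear in $cl(R)$, coming from different preimages. Serganova's classification of GRSs therefore does not suffice; one needs her classification of weak GRSs, which contains two extra families $C(m,n)$ and $BC(m,n)$ that are not root systems of any Lie superalgebra. These arise as $cl(R)$ exactly for the twisted affine superalgebras $A(2m-1,2n-1)^{(2)}$, $A(2m,2n-1)^{(2)}$ and $A(2m,2n)^{(4)}$, so your case analysis would simply omit them from family (1). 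Relatedly, axiom (0') already forces $\dim\ker(-,-)=1$ for \emph{every} AGRS (it is not something irreducibility "should force," and it never grows), so your proposed disjointness invariant "rank of $\ker B$" is identically $1$ and separates nothing; what actually distinguishes (1) from (3)--(4) is the isomorphism type of $cl(R)$, namely whether it is $C(1,1)$ or $A(n,n)$.

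On the $gl(n|n)$ line, which you rightly flag as the hard case, your sublattice enumeration does not account for the asymmetry between (3) and (4): why only \emph{rational} quotients of $gl(2|2)^{(1)}$ are AGRSs, while \emph{all} infinite quotients of $gl(n|n)^{(1)}$, $n\geq3$, are. The reason is that for $n\geq3$ the map $cl$ is a bijection on $\tilde{A}(n-1,n-1)$, so each fibre of the quotient is a single coset $\alpha+\mathbb{Z}\delta$ or $\alpha+(q+\mathbb{Z})\delta$ and axiom (4) holds for every $q\in\mathbb{C}$; for $n=2$ the fibre over an isotropic root of $C(1,1)$ contains both $\mathbb{Z}\delta$ and $(q+\mathbb{Z})\delta$ translates of the same vector, and axiom (4) then forces $q\in\mathbb{Q}$. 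Proving that these "peculiar" systems are the \emph{only} AGRSs lying over $C(1,1)$ is the most delicate computation in the paper (\Lem{cmn11}) and is not a routine orbit-by-orbit matching. Two further points your reconstruction glosses over: finite irreducible AGRSs exist (the $\tilde{A}(n,n)$ of family (2), which still have a one-dimensional kernel), and showing they are the only ones requires the "no simple roots" argument of \Lem{lem2}; and in the $B(0,n)$ case the pair $(cl(R),k)$ does \emph{not} determine $R$ --- $A_{2n}^{(2)}$ and $A(0,2n-1)^{(2)}$ carry identical data and are separated only by reducedness (\Lem{B0nlcomp}), so matching against the Cartan-matrix classification needs this additional invariant.
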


\subsection{Contents of this paper}
In Section \ref{prelim} we establish the basic notation and terminology. In Section \ref{weakclasssection} we discuss the classification of all irreducible weak generalized root systems, which will be useful in understanding the structure of affine generalized root systems. In Sections \ref{generalfacts}-\ref{pfofthm} we develop the structure theory of AGRSs and prove Theorem \ref{thm1}. In Section \ref{coranaly} we summarize all known correspondences between types of root systems and Lie structures. In Section \ref{macrem} we discuss Macdonald's approach, and show how it is consistent with ours. In Section \ref{simple} we discuss general AGRSs and prove Theorem \ref{finunion} about their decomposition into irreducible ones. Finally, in Section \ref{further} we discuss the notions of parity of roots and root subsystems, give some more remarks about AGRSs, and briefly mention possible generalizations that may be made.

\section{Preliminaries}\label{prelim}

\par Throughout this paper the base field is $\mathbb{C}$ (or any other algebraically closed field of characteristic zero). All indices are assumed to be distinct unless otherwise is stated.
\subsection{}
Let $V$ be a finite dimensional complex vector space with a symmetric bilinear form $(-,-)$. Let $\widetilde{cl}$ be the canonical quotient map $\tilde{cl}: V\to V/ker(-,-)$. Note that the map $\widetilde{cl}$ induces a symmetric non-degenerate bilinear form $(-,-)$ on $V/ker(-,-)$, defined by $(\alpha,\beta):=(\widetilde{cl}^{-1}(\alpha),\widetilde{cl}^{-1}(\beta))$.

\par In what follows we introduce different types of root systems $R\subset V$ (RS, ARS, GRS, weak GRS, and AGRS). The vectors in $R$ are called {\em roots}, and a root $\alpha\in R$ is called {\em isotropic} if $(\alpha,\alpha)=0$. A system (of any type) $R$ is called {\em irreducible} if there are no two systems (of any type) $R^{1},R^{2}$, such that for all $\alpha_{1}\in R^{1},\alpha_{2}\in R^{2}:(\alpha_{1},\alpha_{2})=0$ and $R=R^{1}\sqcup R^{2}$. We call a bijective linear map $\psi: V\to V'$  an {\em isomorphism} of the systems $R\subset V,\ R'\subset V'$ if $\psi(R)=R'$ and there exists $a\in\mathbb{C}^*$ such that $(\psi(v),\psi(w))=a(v,w)$ for all $v,w\in V$. It will be clear that every system is a disjoint union of irreducible ones, and that this decomposition is unique. We call two systems $R,R'$ {\em similar} if there exists a decomposition $R=\sqcup_{i\in I}S_{i}$ and a decomposition $R'=\sqcup_{i\in I}S'_{i}$ such that $S_{i}\cong S'_{i}$ for all $i\in I$. Finally, when working with a system $R\subset V$ we denote by $cl$ the restriction of the map $\widetilde{cl}$ to $R$.

\par In \cite{VGRS} V. Serganova introduced generalized root systems, which are finite sets satisfying the conditions (0)-(2) below. In Section 7 of \cite{VGRS} Serganova introduced two alternative definitions. In the first, which we will call a weak generalized root system, the condition (2) is replaced by (2'). Both in the definition of a GRS and of a weak GRS it is assumed that $(-,-)$ is non-degenerate.

\subsection{Finite root systems}
Assume $(-,-)$ is non-degenerate.

\subsubsection{}
\begin{defn}{defroot1}
A finite non-empty set $R\subset V$ is called a {\em generalized root system} (GRS) if
the following conditions are fulfilled:

(0) $0\not\in R$ and $R$ spans $V$;

(1) if $\alpha,\beta\in R$ and $(\alpha,\alpha)\neq0$, then
$\frac{2(\alpha,\beta)}{(\alpha,\alpha)}\in \mathbb{Z}$
and $r_{\alpha}(\beta):=\beta-\frac{2(\alpha,\beta)}{(\alpha,\alpha)}\alpha\in R$;

(2) if $\alpha\in R$ and $(\alpha,\alpha)=0$, then
there exists an invertible mapping $r_{\alpha}: R\to R$ such that

$$r_{\alpha}(\beta) = \begin{cases} \mp\alpha, & \mbox{if } \beta=\pm\alpha \\ \beta, & \mbox{if } (\alpha,\beta)=0 \mbox{ and } \beta\neq\pm\alpha \end{cases}$$

$$r_{\alpha}(\beta)\in\{\beta\pm\alpha\} \mbox{ if }(\alpha,\beta)\neq0.$$
\end{defn}

\subsubsection{}
\begin{defn}{}
A GRS that contains no isotropic roots is called a {\em root system} (RS).
\end{defn}

\subsubsection{}
\begin{defn}{weak}
A finite non-empty set $R\subset V$ is called a {\em weak GRS} if it satisfies conditions (0), (1) from above and the following condition (2'):

(2') $R=-R$ and if  $\alpha\in R$ and $(\alpha,\alpha)=0$, then for any $\beta\in R$ such that $(\alpha,\beta)\neq0$, at least one of the vectors $\beta\pm\alpha$ lies in $R$.
\end{defn}

\subsection{Infinite root systems}
\subsubsection{}
\begin{defn}{affinegrs}
A (possibly infinite) non-empty set $R\subset V$ is called an {\em affine generalized root system} (AGRS) if
it satisfies the following conditions:

(0') $dim(ker(-,-))=1$, $R\cap ker(-,-)=\emptyset$, and $R$ spans $V$;

(1) if $\alpha,\beta\in R$ and $(\alpha,\alpha)\neq0$, then
$\frac{2(\alpha,\beta)}{(\alpha,\alpha)}\in \mathbb{Z}$
and $r_{\alpha}(\beta):=\beta-\frac{2(\alpha,\beta)}{(\alpha,\alpha)}\alpha\in R$;

(2) if $\alpha,\beta\in R$ and $(\alpha,\alpha)=0$, then
there exists an invertible mapping $r_{\alpha}: R\to R$ such that

$$r_{\alpha}(\beta) = \begin{cases} \mp\alpha, & \mbox{if } \beta=\pm\alpha \\ \beta, & \mbox{if } (\alpha,\beta)=0 \mbox{ and } \beta\neq\pm\alpha \end{cases}$$

$$r_{\alpha}(\beta)\in\{\beta\pm\alpha\} \mbox{ if }(\alpha,\beta)\neq0;$$

(3) $|cl(R)|<\infty$;

(4) $\forall\alpha\in cl(R),\exists\delta_{\alpha}\in ker(-,-):\forall\alpha',\alpha''\in R:cl(\alpha')=cl(\alpha'')=\alpha\Longrightarrow\alpha''-\alpha'\in\mathbb{Z}\delta_{\alpha}$.
\end{defn}

\subsubsection{}
\begin{defn}{}
An AGRS that contains no isotropic roots is called an {\em affine root system} (ARS).
\end{defn}

\par A GRS or an AGRS $R$ is called {\em reduced} if for all $\alpha\in R$: $\mathbb{C}\alpha\cap R=\{\pm\alpha\}$.

\par By~\cite{VGRS}, the map defined in axiom (2) of a GRS (which is called {\em an odd reflection}) is an involution, and it is uniquely defined, i.e. $(\alpha,\alpha)=0,(\alpha,\beta)\neq0$ implies $|\{\beta\pm\alpha\}\cap R|=1$ (see Lemma 1.11 in \cite{VGRS}). The same proof holds also for AGRSs, so in a GRS and in an AGRS the map $r_{\alpha}:R\to R$ is uniquely defined also for all isotropic roots $\alpha$.

\subsection{}
\begin{defn}{}
Let $R$ be a (possibly weak) GRS or an AGRS. We call the group generated by all well defined reflections $\widetilde{W}:=<r_{\alpha}>$ {\em the generalized Weyl group of} $R$.
\end{defn}

\begin{rem}{}
$\widetilde{W}$ is a subgroup of $Aut(R)$. Except for the cases when $R$ is a weak GRS and not a GRS, $r_{\alpha}$ is well defined for all $\alpha\in R$. For any non-isotropic $\alpha\in R$ one may define $r_{\alpha}\in End(V)$, {\em a Euclidean reflection with respect to $\alpha$}, by $r_{\alpha}(\beta):=\beta-\frac{2(\alpha,\beta)}{(\alpha,\alpha)}\alpha$. Doing so one has that if $R$ is either an RS or an ARS, then $\widetilde{W}$ is the restriction of the subgroup of $End(V)$ generated by all Euclidean reflections with respect to non-isotropic roots, and it coincides with the standard Weyl group.
\end{rem}

\subsection{}
\begin{defn}{stronglygen}
Let $R$ be a GRS and let $S\subseteq R$. Define for every $n\in\mathbb{N}$: $$S_{0}:=S~,~S_{n}:=\bigcup_{\alpha,\beta\in\cup_{j=0}^{n-1}S_{j}}r_{\alpha}(\beta).$$
We say that the subset $S$ {\em generates} $R$ if $R=\bigcup_{i=0}^{\infty}S_{i}$.
\end{defn}

\subsection{}\label{peculiar}
\begin{defn}{peculiar}
Let $q\in\mathbb{C}^{*},0\leq Re(q)<1$, and let $V$ be the three dimensional complex space spanned by the basis $\{\epsilon_{1},\delta_{1},\delta\}$ with the symmetric form: $$(\epsilon_{1},\epsilon_{1})=-(\delta_{1},\delta_{1})=1,(\epsilon_{1},\delta_{1})=(\epsilon_{1},\delta)=(\delta_{1},\delta)=(\delta,\delta)=0.$$ We call the following set $C(1,1)^{q}$:
$$C(1,1)^{q}:=\{\pm(2\epsilon_{1}+\mathbb{Z}\delta);\pm(2\delta_{1}+(q+\mathbb{Z})\delta);$$ $$\pm(\epsilon_{1}+\delta_{1}+(q+\mathbb{Z})\delta);\pm(\epsilon_{1}+\delta_{1}+\mathbb{Z}\delta);\pm(\epsilon_{1}-\delta_{1}-(q+\mathbb{Z})\delta);\pm(\epsilon_{1}-\delta_{1}+\mathbb{Z}\delta)\}.$$
We note that if $q\in\mathbb{Q}$ then $C(1,1)^{q}$ is an AGRS, and call an AGRS {\em peculiar} if it is isomorphic to $C(1,1)^{q}$ for some rational $0<q<1$.
\end{defn}

\subsection{}\label{Ann}
\begin{defn}{}
Let $n>0$ and let $V$ be the $2n+2$ dimensional space spanned by the basis $\{\epsilon_{i},\delta_{i}\}_{i\in\{1,2,..,.n+1\}}$, with the bilinear form $(\epsilon_{i},\epsilon_{j})=-(\delta_{i},\delta_{j})=\delta_{i,j},(\epsilon_{i},\delta_{j})=0$. We define $$\tilde{A}(n,n):=\{\epsilon_{i}-\epsilon_{j},\delta_{i}-\delta_{j}\}_{i\neq j}^{i,j=1,2,..n+1}\cup\{\pm(\epsilon_{i}-\delta_{j})\}_{i=1,2,..,n+1}^{j=1,2,..,n+1},$$ and $V':=span_{\mathbb{C}}\{R\}$ (a $2n+1$ dimensional subspace of $V$).
\end{defn}

\begin{rem}{}
For every $n\geq1~\tilde{A}(n,n)\subset V'$ is an irreducible finite AGRS (in Lemma \ref{lem2} we shall show that these are the only irreducible finite AGRSs). Note that $cl(\tilde{A}(1,1))\cong C(1,1)=:A(1,1)$ (see (16) in Section \ref{noisoclass} below) and if $n>1$ then $cl(\tilde{A}(n,n))=A(n,n)$ and $cl:\tilde{A}(n,n)\to A(n,n)$ is a bijection.
\par Note that $\tilde{A}(n,n)$ is the set of roots of the Lie superalgebra $gl(n+1|n+1)$.
\end{rem}

\subsection{}\label{quodef}
\begin{defn}{}
Retain the notation of Definition \ref{Ann}, and let $V'':=V\oplus span_{\mathbb{C}}\{\delta\}$, a $2n+3$ dimensional space with the form defined by the bilinear form of $V$ and $\delta$ in its kernel. We define the subset
$$\tilde{A}(n,n)^{(1)}:=\{\alpha+\mathbb{Z}\delta|~\alpha\in\tilde{A}(n,n)\}.$$
Denote $Id:=\Sigma_{i=1}^{n+1}(\epsilon_{i}-\delta_{i})$ and then $ker(-,-)=span_{\mathbb{C}}\{\delta,Id\}$.
Let us consider all possible quotients $V''/\mathbb{C}x$, where $x\in ker(-,-)\setminus\{0\}$. Under the quotient $V''/\mathbb{C}\delta$, $\tilde{A}(n,n)^{(1)}$ is mapped to the AGRS $\tilde{A}(n,n)$. All other quotients are of the form $V''/\mathbb{C}(q\delta+Id)$ for some $q\in\mathbb{C}$. We define the image of $\tilde{A}(n,n)^{(1)}$ under this quotient as $\tilde{A}(n,n)_{q}^{(1)}$. We shall see that if $n>1$ then $\tilde{A}(n,n)_{q}^{(1)}$ is an infinite AGRS with $cl(\tilde{A}(n,n)_{q}^{(1)})=A(n,n)$, and if $n=1$ this is true if and only if $q$ is rational.
\par Clearly $\tilde{A}(n,n)_{q}^{(1)}$ can be represented as:
$$\{\epsilon_{i}-\epsilon_{j}+\mathbb{Z}\delta,\delta_{i}-\delta_{j}+\mathbb{Z}\delta\}_{i\neq j}^{i,j=1,2,..n+1}\cup\{\pm(\epsilon_{i}-\delta_{j})+Id+(q+\mathbb{Z})\delta\}_{i=1,2,..,n+1}^{j=1,2,..,n+1},$$ and so $\tilde{A}(n,n)_{q}^{(1)}\cong\tilde{A}(n,n)_{q'}^{(1)}$ if $q-q'\in\mathbb{Z}$. In Lemma \ref{easyiso} we shall show that $\tilde{A}(n,n)_{q}^{(1)}\cong\tilde{A}(n,n)_{q'}^{(1)}$ if and only if either $q-q'\in\mathbb{Z}$ or $q+q'\in\mathbb{Z}$.
\par Note that $\tilde{A}(n,n)^{(1)}$ is the set of real roots of the affine Lie superalgebra $gl(n+1|n+1)^{(1)}$. We call $\tilde{A}(n,n)_{q}^{(1)}$ {\em an infinite quotient of $\tilde{A}(n,n)^{(1)}$}, and {\em a rational quotient} if $q$ is rational.
\end{defn}

\par I.G. Macdonald (see~\cite{M1}) classified all irreducible ARSs (using equivalent definitions to ours, see Section \ref{macrem} below). From the classification it follows that irreducible ARSs are precisely the sets of real roots of indecomposable affine Kac-Moody algebras, if they are reduced, and the sets of real roots of indecomposable symmetrizable affine Kac-Moody superalgebras that contain no isotropic roots and have a non-trivial odd part, if they are not reduced (see Table \ref{table1} in Section \ref{coranaly} below). V. Serganova (see~\cite{VGRS}) classified all irreducible GRSs, and from this classification it follows that these are almost precisely the root systems of the basic classical Lie superalgebras (see Table \ref{table1} in Section \ref{coranaly} and \ref{sergexception} below).

\section{Classification of weak generalized root systems}\label{weakclasssection}

Let $R$ be an irreducible weak GRS. We distinguish between two cases. The first is when all roots are non-isotropic, and the second is when $R$ contains at least one isotropic root.

\subsection{No isotropic roots}

Let $R$ be an irreducible weak GRS that contains no isotropic roots (i.e. an RS). A classical result (see for instance \cite{C1}, \cite{Killing1} and \cite{V1}) yields it must be either a root system of a finite dimensional simple Lie algebra (if it is reduced), or the non-reduced system $BC_{n}$, which we will denote by $B(0,n)$ (since it is the root system of a basic classical Lie superalgebra). The full list is: \par (1) $A_{n}$; \par (2) $B_{n}$; \par (3) $C_{n}$; \par (4) $D_{n}$; \par (5) $E_{6},E_{7},E_{8}$; \par (6) $F_{4}$; \par (7) $G_{2}$; \par (8) $BC_{n}=B(0,n)=\{\pm\delta_{i},\pm\delta_{i}\pm\delta_{j},\pm2\delta_{i}\}_{i,j\in\{1,2,...,n\}}$, when $(\delta_{i},\delta_{j})=-\delta_{i,j}$.

\subsection{$R$ contains at least one isotropic root}\label{noisoclass}

By \cite{VGRS} if $R$ contains an isotropic root, then it must be either one of the following GRSs corresponding to the basic classical Lie superalgebras (an explicit description of these is given in \cite{K1}): \par (9) $A(m,n)$, $m\geq0,n\geq1,(m,n)\neq(1,1)$; \par (10) $B(m,n)$, $m\geq1,n\geq1$; \par(11) $C(n)$, $n\geq2$; \par(12) $D(m,n)$, $m\geq2,n\geq1$; \par(13) $D(2,1;\lambda)$; \par(14) $G(3)$; \par(15) $F(4)$; \\*
or one of the two weak GRSs that are not the set of real roots of any Lie superalgebra: \par(16) $C(m,n)=\{\pm\epsilon_{i}\pm\epsilon_{j},\pm2\epsilon_{i},\pm\delta_{i}\pm\delta_{j},\pm2\delta_{i}\}_{i\neq j}\cup\{\pm\epsilon_{i}\pm\delta_{j}\}$, $m,n\geq1$; \par (17) $BC(m,n)=\{\pm\epsilon_{i}\pm\epsilon_{j},\pm\epsilon_{i},\pm2\epsilon_{i},\pm\delta_{i}\pm\delta_{j},\pm\delta_{i},\pm2\delta_{i}\}_{i\neq j}\cup\{\pm\epsilon_{i}\pm\delta_{j}\}$, $m,n\geq1$.\\* In (16),(17) $\{\epsilon_{i}\}_{i=1}^{m}\sqcup\{\delta_{j}\}_{j=1}^{n}$ is a basis of $V$ with the bilinear form $(\epsilon_{i},\epsilon_{j})=-(\delta_{i},\delta_{j})=\delta_{i,j},(\epsilon_{i},\delta_{j})=0$.

\subsubsection{}
\begin{rem}{}
In the definition of $A(m,n)$ (see \cite{K1}) the set $S:=\{\epsilon_{i},\delta_{j}\}_{i=1,2,..,m+1}^{j=1,2,..,n+1}$ is linearly independent, except for the case $A(n,n)$ for $n>1$: in that case every subset obtained from $S$ by removing one vector is linearly independent, but $\sum_{i=1}^{n+1}(\epsilon_{i}-\delta_{i})=0$ (recall that $A(n,n)$ is the root system of $psl(n+1|n+1):=sl(n+1|n+1)/<\mathbb{1}_{2n+2}>$). Hence $dim(span_{\mathbb{C}}(A(n,n)))=2n$, and in particular $A(n,n)\ncong\tilde{A}(n,n)$.
\end{rem}

\subsection{}\label{weakclass}
\begin{cor}{}
Any irreducible weak GRS has one of the forms (1)-(17).
\end{cor}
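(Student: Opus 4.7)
The plan is to reduce the statement to the two classifications already recorded in Subsections 3.1 and 3.2, assembled via the natural dichotomy on whether $R$ contains an isotropic root. Let $R\subset V$ be an irreducible weak GRS in the sense of Definition~\ref{weak}.

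Suppose first that $R$ contains no isotropic root. Then the first clause of axiom (2'), namely $R=-R$, already follows from axiom (1) by taking $\beta=\alpha$: since $(\alpha,\alpha)\neq 0$ for every $\alpha\in R$, we have $r_\alpha(\alpha)=-\alpha\in R$. The remaining (conditional) clause of (2') is vacuous under this hypothesis. Hence the weak GRS axioms collapse exactly to the ordinary axioms of a finite (possibly non-reduced) root system in the classical sense, and I would invoke the Cartan--Killing classification together with the standard treatment of the non-reduced case to conclude that $R$ is isomorphic to one of $A_n, B_n, C_n, D_n, E_6, E_7, E_8, F_4, G_2$, or $BC_n=B(0,n)$, which are precisely the systems (1)--(8) of Subsection 3.1.

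If on the other hand $R$ contains at least one isotropic root, the plan is to quote Serganova's classification of weak GRSs with isotropic roots from \cite[Section~7]{VGRS}: such an irreducible $R$ is isomorphic either to one of the GRSs (9)--(15) arising as the root system of a basic classical Lie superalgebra (including the exceptional families $D(2,1;\lambda)$, $G(3)$, $F(4)$), or to one of the two genuinely weak systems (16) $C(m,n)$ and (17) $BC(m,n)$. A short independent verification shows that (16) and (17) indeed satisfy (2') but not (2): for any isotropic $\alpha=\pm\epsilon_i\pm\delta_j$ and any $\beta\in R$ with $(\alpha,\beta)\neq 0$, inspection of the list of roots yields $\{\beta+\alpha,\beta-\alpha\}\cap R\neq\emptyset$, whereas in general both $\beta+\alpha$ and $\beta-\alpha$ can lie in $R$ simultaneously, obstructing the existence of the bijective reflection required by (2).

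The main obstacle is not in this assembly — which is purely bookkeeping and a citation — but lies in the substantive inputs, especially Serganova's analysis of the isotropic case, which proceeds by a delicate case-by-case study of chains of odd reflections. Once those classifications are in hand, the present corollary follows simply by observing that the dichotomy on the presence of an isotropic root is exhaustive and that the two resulting lists concatenate to (1)--(17).
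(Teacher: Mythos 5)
Your proposal is correct and follows essentially the same route as the paper: Section \ref{weakclasssection} is organized around exactly the dichotomy you use (no isotropic roots, handled by the classical Cartan--Killing classification plus the non-reduced case $BC_n$; at least one isotropic root, handled by citing Serganova's classification in \cite{VGRS}, which yields (9)--(17) including the two genuinely weak systems $C(m,n)$ and $BC(m,n)$). Your extra observations --- that $R=-R$ already follows from axiom (1) via $r_\alpha(\alpha)=-\alpha$ in the anisotropic case, and that (16)--(17) satisfy (2') but not (2) --- are correct but not needed beyond what the paper records.
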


\section{Step \romannumeral1: General properties of Affine Generalized Root Systems}\label{generalfacts}

As any AGRS is a finite union of irreducible AGRSs and GRSs (see Theorem \ref{finunion} below), in order to study and understand AGRSs in general we start by studying irreducible AGRSs, and so usually assume that $R$ is an irreducible AGRS.

\subsection{Notation}
Recall that for $\alpha\in\ cl(R)$: $cl^{-1}(\alpha):=\{x\in R|~cl(x)=\alpha\}$.

\subsection{}
\begin{prop}{prop1}
Let $R$ be an AGRS in $V$, then $cl(R)$ is a weak GRS in $\widetilde{cl}(V)$. Moreover, $R$ is irreducible if and only if $cl(R)$ is irreducible.
\end{prop}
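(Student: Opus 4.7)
The plan is to verify the weak GRS axioms for $cl(R)$ by lifting each condition to a preimage in $R$ and applying the AGRS axioms there, and then to establish the irreducibility equivalence by tracking how the one-dimensional $\ker(-,-)$ sits inside any orthogonal decomposition.

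For the weak GRS structure, axiom (0) is immediate: $0 \notin cl(R)$ by (0'), $cl(R)$ spans $\widetilde{cl}(V)$ because $R$ spans $V$, the induced form on the quotient is non-degenerate by construction, and $cl(R)$ is finite by axiom (3). For axiom (1), given $\alpha, \beta \in cl(R)$ with $(\alpha,\alpha) \neq 0$, I would choose preimages $\alpha', \beta' \in R$ and invoke AGRS axiom (1); since the form factors through the quotient and the reflection $\beta' - \frac{2(\alpha',\beta')}{(\alpha',\alpha')}\alpha'$ descends under $cl$, both the integrality and reflection assertions carry down. For axiom (2'), note first that $R = -R$ because $r_\alpha(\alpha) = -\alpha$ for every $\alpha \in R$ (by (1) if non-isotropic, by (2) if isotropic), so $cl(R) = -cl(R)$; the odd-reflection clause comes from the analogous lifting using AGRS axiom (2).

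For the irreducibility equivalence I would argue by contrapositive in both directions. If $cl(R) = S^1 \sqcup S^2$ is an orthogonal decomposition into non-empty weak GRSs, then $R^i := cl^{-1}(S^i) \cap R$ yields an orthogonal decomposition $R = R^1 \sqcup R^2$; each $R^i$ is a system in its span $V^i$ (an AGRS or a GRS according to whether $\ker(-,-) \subset V^i$ or $\ker(-,-) \cap V^i = 0$, the only two options since $\ker(-,-)$ is one-dimensional), with defining axioms inherited from those of $R$ by the same lifting principle.

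Conversely, suppose $R = R^1 \sqcup R^2$ with non-empty systems $R^i$ and $(R^1, R^2) = 0$, and set $V^i := \spn(R^i)$. Using $V^1 \perp V^2$ one checks that the radical of the form on $V^i$ equals $V^i \cap \ker(-,-)$, so at least one $V^i$ must contain the generator $\delta$ of $\ker(-,-)$ (otherwise decomposing $\delta \in V^1 + V^2$ forces $\delta = 0$). Without loss of generality $\delta \in V^1$, giving $\ker(-,-) \subset V^1$. If $cl(\alpha) = cl(\beta)$ for some $\alpha \in R^1, \beta \in R^2$, then $\alpha - \beta \in \ker(-,-) \subset V^1$, hence $\beta \in V^1 \cap V^2 \subset \ker(-,-)$, contradicting $R \cap \ker(-,-) = \emptyset$. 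Thus $cl(R^1) \cap cl(R^2) = \emptyset$; each $cl(R^i)$ is a non-empty weak GRS by the first part of the proposition (or trivially if $R^i$ is already a GRS, since $cl|_{V^i}$ is then injective), so $cl(R)$ is reducible. The main obstacle is this final disjointness step, which depends essentially on the one-dimensionality of $\ker(-,-)$ built into axiom (0').
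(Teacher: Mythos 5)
Your proof is correct and follows essentially the same route as the paper: the weak GRS axioms for $cl(R)$ are verified by lifting to preimages in $R$ exactly as in the paper's proof, and your careful treatment of the irreducibility equivalence (tracking the one-dimensional kernel through the orthogonal decomposition) correctly fills in the step the paper dismisses as ``straightforward.''
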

\begin{proof}
Let $R$ be an AGRS in $V$. By definition $cl(R)$ is finite (and clearly it is non-empty). Let $\alpha,\beta\in cl(R)$ and assume $(\alpha,\beta)\neq0$. We may find $\alpha',\beta'\in R$ such that $\alpha=cl(\alpha'),\beta=cl(\beta')$. Let us check the axioms of a weak GRS. \par
(0) $\alpha\neq0$ (otherwise $\alpha'\in ker(-,-)$), the rest is trivial. \par
(1) Assume $(\alpha,\alpha)\neq0$. We have $\frac{2(\alpha,\beta)}{(\alpha,\alpha)}=\frac{2(\alpha',\beta')}{(\alpha',\alpha')}\in \mathbb{Z}$. Since $r_{\alpha'}(\beta')\in R$ it is clear that $r_{\alpha}(\beta)\in cl(R)$. \par
(2) Assume $(\alpha,\alpha)=0$. One of $\alpha'\pm\beta'$ is in $R$, and so one of $\alpha\pm\beta$ is in $cl(R)$.
\par Proving that irreducibility of $R$ is equivalent to irreducibility of $cl(R)$ is straightforward.
\end{proof}

\begin{rem}{}
It may be possible to find $\alpha',\alpha'',\beta',\beta''\in R$ such that $cl(\alpha')=cl(\alpha'')=\alpha,cl(\beta')=cl(\beta'')=\beta$ and $\alpha'+\beta',\alpha''-\beta''\in R$, and so both $\alpha+\beta$ and $\alpha-\beta$ are in $cl(R)$, i.e. $cl(R)$ may sometimes be a weak GRS but not a GRS.
\end{rem}

\par Generalizing V. Kac (see ~\cite{K2}, Chapter 6) we prove the following proposition:
\subsection{}\label{lem1}
\begin{prop}{lem1}
Let $R$ be an irreducible AGRS and $\alpha,\beta\in cl(R)$ be such that $(\alpha,\beta)\not=0$.
Assume that either $(\alpha,\alpha)\not=0$ or $cl(R)\cap
\{\beta+2\alpha, \beta-2\alpha\}=\emptyset,|cl(R)\cap\{\beta+\alpha, \beta-\alpha\}|=1$.
Let $\alpha',\alpha'',\beta'\in R$ be such that $cl(\alpha')=cl(\alpha'')=\alpha,\
cl(\beta')=\beta$. Then there exists $t_{\alpha,\beta}\in\mathbb{Z}\setminus\{0\}$ such that
$$(r_{\alpha''}r_{\alpha'})^m (\beta')=\beta'+t_{\alpha,\beta} m(\alpha''-\alpha'),$$
and it is given by
$$t_{\alpha,\beta} = \begin{cases} \frac{2(\alpha,\beta)}{(\alpha,\alpha)}, & \mbox{if } (\alpha,\alpha)\neq0 \\ -1, & \mbox{if } (\alpha,\alpha)=0,\beta+\alpha\in cl(R) \\ 1, & \mbox{if } (\alpha,\alpha)=0,\beta-\alpha\in cl(R). \end{cases}$$
\end{prop}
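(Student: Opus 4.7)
The plan is to split on whether the root $\alpha$ is isotropic and to handle each case by a short induction on $m$, using in both cases the key observation that the difference $\alpha''-\alpha'$ lies in $\ker(-,-)$ by axiom (4) of an AGRS.

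Case 1: $(\alpha,\alpha)\neq 0$. Here axiom (1) makes $r_{\alpha'}$ and $r_{\alpha''}$ the honest Euclidean reflections on $V$, so they are linear. Using $(\alpha',\alpha')=(\alpha',\alpha'')=(\alpha'',\alpha'')=(\alpha,\alpha)$ and $(\alpha',\beta')=(\alpha'',\beta')=(\alpha,\beta)$, a direct computation gives
\[
r_{\alpha''}r_{\alpha'}(\beta')=\beta'+t(\alpha''-\alpha'),\qquad t=\tfrac{2(\alpha,\beta)}{(\alpha,\alpha)}\in\mathbb{Z}\setminus\{0\}.
\]
Since $\alpha''-\alpha'\in\ker(-,-)$, it is orthogonal to both $\alpha'$ and $\alpha''$, hence fixed by each reflection; so $w:=r_{\alpha''}r_{\alpha'}$ fixes $\alpha''-\alpha'$ and linearity yields $w^m(\beta')=\beta'+mt(\alpha''-\alpha')$ by an immediate induction.

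Case 2: $(\alpha,\alpha)=0$. Now $r_{\alpha'}$, $r_{\alpha''}$ are only permutations of $R$, so one must track signs. By the hypothesis exactly one of $\beta\pm\alpha$ lies in $cl(R)$ and neither of $\beta\pm 2\alpha$ does. Say $\beta+\alpha\in cl(R)$ (the other case is symmetric and yields the opposite sign). By the uniqueness statement following Definition~\ref{affinegrs} (Serganova's Lemma~1.11, which carries over to AGRSs), $r_{\alpha'}(\beta')$ equals the unique element of $\{\beta'\pm\alpha'\}\cap R$, whose image under $cl$ must lie in $cl(R)\cap\{\beta\pm\alpha\}=\{\beta+\alpha\}$, forcing $r_{\alpha'}(\beta')=\beta'+\alpha'$. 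Applying the same reasoning to $r_{\alpha''}(\beta'+\alpha')$, the two candidates close to $\beta+2\alpha$ and $\beta$; only the second is in $cl(R)$, giving $r_{\alpha''}(\beta'+\alpha')=\beta'+\alpha'-\alpha''$, i.e. $w(\beta')=\beta'-(\alpha''-\alpha')$. This matches $t_{\alpha,\beta}=-1$.

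For the inductive step, assume $w^m(\beta')=\beta'+m\alpha'-m\alpha''$. Its closure is still $\beta$ because $\alpha''-\alpha'\in\ker(-,-)$, so the same closure hypotheses on $\beta\pm\alpha$, $\beta\pm 2\alpha$ are available. Applying $r_{\alpha'}$, the two candidates $w^m(\beta')\pm\alpha'$ close to $\beta\pm\alpha$; only $\beta+\alpha$ lies in $cl(R)$, so we pick up $+\alpha'$. Applying then $r_{\alpha''}$, the two candidates close to $\beta+2\alpha$ and $\beta$; only $\beta$ survives, so we pick up $-\alpha''$. This yields $w^{m+1}(\beta')=\beta'+(m+1)\alpha'-(m+1)\alpha''$, completing the induction. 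Finally $t_{\alpha,\beta}\in\mathbb{Z}\setminus\{0\}$ since $t\in\{\pm 1\}$ here, and in Case 1 since $t\neq 0$ follows from $(\alpha,\beta)\neq 0$.

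The main obstacle is the bookkeeping of signs in Case 2: the reflections are not linear, so one cannot just iterate a formula; one must repeatedly invoke the hypothesis on $cl(R)\cap\{\beta\pm\alpha,\beta\pm 2\alpha\}$ to single out the correct preimage at each application of $r_{\alpha'}$ and $r_{\alpha''}$. The key simplification that makes this work is that $\alpha''-\alpha'\in\ker(-,-)$, which both preserves the closure $cl(w^m(\beta'))=\beta$ throughout the induction and eliminates any cross terms in Case 1.
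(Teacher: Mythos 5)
Your proof is correct and follows essentially the same route as the paper's: induction on $m$, a case split on whether $\alpha$ is isotropic, linearity of the Euclidean reflections together with $\alpha''-\alpha'\in\ker(-,-)$ in the non-isotropic case, and the hypotheses on $cl(R)\cap\{\beta\pm\alpha,\beta\pm2\alpha\}$ to pin down the signs in the isotropic case. If anything, your sign bookkeeping in the isotropic case is spelled out more explicitly than in the paper, but the argument is the same.
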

\begin{proof}
We prove the proposition by induction on $m$. The case $m=0$ is trivial.
\par If $(\alpha,\alpha)\neq0$ then also $(\alpha',\alpha'),(\alpha'',\alpha'')\neq0$ and so $r_{\alpha'},r_{\alpha''}\in End(V)$. Therefor:
$$(r_{\alpha''}r_{\alpha'})^{m+1}(\beta')=(r_{\alpha''}r_{\alpha'})(\beta'+t_{\alpha,\beta} m(\alpha''-\alpha'))=$$ $$(r_{\alpha''}r_{\alpha'})(\beta')+t_{\alpha,\beta} m(\alpha''-\alpha')=\beta'+\frac{2(\alpha,\beta)}{(\alpha,\alpha)}(\alpha''-\alpha')+t_{\alpha,\beta} m(\alpha''-\alpha')=\beta'+t_{\alpha,\beta}(m+1)(\alpha''-\alpha')$$ (the first equality is made by induction hypothesis, the second by linearity of $r_{\alpha}$ and the fact that $\alpha''-\alpha'\in ker(-,-)$, and the third is just a straight forward calculation).

\par If $(\alpha,\alpha)=0$, then assume $r_{\alpha'}(\beta')=\beta'+\alpha'\in R$ and so $\beta+\alpha\in cl(R)$. \\* As $|cl(R)\cap\{\beta+\alpha, \beta-\alpha\}|=1$, $\beta-\alpha\not\in cl(R)$. So we have: $$(r_{\alpha''}r_{\alpha'})^{m+1}(\beta')=(r_{\alpha''}r_{\alpha'})(\beta'-m(\alpha''-\alpha'))=$$ $$r_{\alpha''}(\beta'+t_{\alpha,\beta} m(\alpha''-\alpha')+\alpha')=\beta'-(m+1)(\alpha''-\alpha'),$$ as required (the second equality is since $(\alpha',\beta'+t_{\alpha',\beta'} m(\alpha''-\alpha'))\neq0$ and $\beta-\alpha\not\in cl(R)$ and the third is as $cl(R)\cap\{\beta+2\alpha, \beta-2\alpha\}=\emptyset$). \\* Assuming $r_{\alpha'}(\beta')=\beta'-\alpha'\in R$ to get $t_{\alpha,\beta}=1$ is done the same.
\end{proof}
\subsubsection{}
\begin{rem}{}
One easily sees from the classification in Section \ref{weakclasssection} that the only case when $(\alpha,\alpha)=0$ but $cl(R)$ contains elements of the form $\beta\pm2\alpha$ (resp. $|cl(R)\cap\{\beta+\alpha, \beta-\alpha\}|=2$) is when $\alpha\in\{\pm\epsilon_{i}\pm\delta_{j}\},\beta\in\{\pm2\epsilon_{i},\pm2\delta_{j}\}$ ($\alpha\in\{\pm(\epsilon_{i}+\delta_{j})\},\beta\in\{\pm(\epsilon_{i}-\delta_{j})\}$, or vise versa).
\end{rem}

\subsection{}\label{lem2}
\begin{lem}{lem2}
Any irreducible AGRS is infinite, unless it is isomorphic to $\tilde{A}(n,n)$ for some $n\geq1$.
\end{lem}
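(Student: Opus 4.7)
The plan is to prove the contrapositive: if $R$ is a finite irreducible AGRS in $V$, then $R\cong\tilde{A}(n,n)$ for some $n\geq 1$. By Proposition~\ref{prop1} the weak GRS $cl(R)$ is finite and irreducible, hence one of the seventeen types listed in Corollary~\ref{weakclass}. The strategy is first to use Proposition~\ref{lem1} to bound the fibres of $cl\colon R\to cl(R)$: if some $\alpha\in cl(R)$ had $|cl^{-1}(\alpha)|\geq 2$ and any $\beta\in cl(R)$ with $(\alpha,\beta)\neq 0$ satisfied the hypotheses of Proposition~\ref{lem1}, then iterating $r_{\alpha''}r_{\alpha'}$ would produce the infinite arithmetic progression $\beta'+t_{\alpha,\beta}m(\alpha''-\alpha')$ of roots, contradicting the finiteness of $R$.

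Combined with the remark after Proposition~\ref{lem1}, this shows that a non-trivial fibre can occur only over an isotropic $\alpha\in\{\pm(\epsilon_i\pm\delta_j)\}$ whose every partner $\beta$ lies in $\{\pm 2\epsilon_i,\pm 2\delta_j,\pm(\epsilon_i\mp\delta_j)\}$. A case-by-case inspection of the list (1)--(17) shows that this only happens when $cl(R)=C(1,1)=A(1,1)$; in any other weak GRS with isotropic roots a non-exceptional partner $\beta$ exists, as for instance in $A(m,n)$ with $(m,n)\neq(1,1)$ one can pick $\alpha=\epsilon_i-\delta_j$, $\beta=\epsilon_i-\delta_k$ with $k\neq j$. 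Thus either $cl(R)=A(1,1)$---in which case axiom~(4) and finiteness pin down the isotropic fibres to size exactly two and a direct verification identifies $R$ with $\tilde{A}(1,1)$---or every fibre of $cl$ is a singleton.

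In the singleton-fibre case $cl$ is a bijection with inverse $\sigma$, and axioms~(1), (2) force $\sigma(r_\alpha(\beta))=\sigma(\beta)-k_{\alpha,\beta}\sigma(\alpha)$ whenever $r_\alpha(\beta)\in cl(R)$; every linear relation in $cl(R)$ that is generated by iterated reflections therefore lifts identically to a relation among elements of $R$. But axiom~(0') requires $\dim V=\dim\widetilde{cl}(V)+1$, so for $R$ to span $V$ some linear relation in $cl(R)$ must lift to a non-zero element of $\ker(-,-)$, and hence must escape the reflection-generated relations. For types (1)--(8) all relations among roots are reflection-generated, and for the types (9)--(17) other than $A(n,n)$ with $n\geq 2$ the set $\{\epsilon_i,\delta_j\}$ is linearly independent in $\widetilde{cl}(V)$, again forcing all relations to be reflection-generated---so these cases yield contradictions. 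The only remaining possibility is $cl(R)=A(n,n)$ with $n\geq 2$, whose distinguished Weyl-invariant relation $\sum_{i=1}^{n+1}(\epsilon_i-\delta_i)=0$ is not reflection-generated; lifting it to a non-zero element of $\ker(-,-)$ reconstructs exactly the AGRS $\tilde{A}(n,n)$ up to isomorphism. The principal obstacle is the dual case-by-case analysis---that only $A(1,1)$ has an isotropic root with all partners exceptional, and that only $A(n,n)$ with $n\geq 2$ carries a linear relation among its roots that is not reflection-generated---each of which requires inspecting the seventeen types separately.
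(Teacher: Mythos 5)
Your proposal follows essentially the same route as the paper's proof: Proposition~\ref{lem1} shows that a non-singleton fibre of $cl$ forces $R$ to be infinite except when $cl(R)\cong C(1,1)$; that case is treated separately to extract $\tilde{A}(1,1)$; and in the injective case one derives a dimension contradiction ($\dim V=\dim\widetilde{cl}(V)+1$ against the lift of $cl(R)$ spanning $V$) for every type except $A(n,n)$ with $n\geq 2$. Your ``reflection-generated relations'' formulation is the paper's ``set of simple roots'' argument in different clothing. Two steps, however, are under-justified as written, and they are precisely where the paper does real work.

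First, in the $C(1,1)$ case the claim that ``axiom~(4) and finiteness pin down the isotropic fibres to size exactly two'' does not follow from axiom~(4) and finiteness alone: those only say each fibre is a finite subset of an arithmetic progression, with no bound on its size. The missing idea is a transfer of multiplicity from isotropic to non-isotropic classes: if $\alpha'$, $\alpha'+\delta$, $\alpha'+\delta'$ are three distinct roots over $\epsilon_1+\delta_1$ and $\beta'$ lies over $\epsilon_1-\delta_1$, then $r_{\beta'}$ sends each of them to a root over one of the two classes $2\epsilon_1$, $2\delta_1$, so by pigeonhole (and injectivity of $r_{\beta'}$) some non-isotropic fibre has at least two elements, and then Proposition~\ref{lem1} applied with $\beta=\alpha$ makes $R$ infinite. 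Second, for types (16) and (17) the phrase ``all relations are reflection-generated'' is problematic, because in $C(m,n)$ and $BC(m,n)$ the odd reflection $r_{\epsilon_i-\delta_j}$ is not well defined in $cl(R)$ (both $2\epsilon_i$ and $2\delta_j$ are roots), so your formula $\sigma(r_\alpha(\beta))=\sigma(\beta)-k_{\alpha,\beta}\sigma(\alpha)$ does not say which of the two competing relations the lift satisfies. The clean argument here, which the paper uses, is that injectivity of $cl$ forces \emph{both} $\sigma(\epsilon_i+\delta_j)+\sigma(\epsilon_i-\delta_j)=\sigma(2\epsilon_i)$ and $\sigma(\epsilon_i+\delta_j)-\sigma(\epsilon_i-\delta_j)=\sigma(2\delta_j)$ to hold in $R$ (each is obtained by reflecting $\sigma(2\epsilon_i)$, respectively $\sigma(2\delta_j)$, by $r_{\sigma(\epsilon_i-\delta_j)}$, since only one of the two candidate images has its class in $cl(R)$); this contradicts the uniqueness of the odd reflection in the AGRS $R$ directly, with no dimension count needed. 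Both gaps are fillable, but as stated the proposal asserts their conclusions rather than proving them.
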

\begin{proof}
Let $R\subset V$ be an irreducible finite AGRS. Recall that $cl(R)$ is an irreducible weak GRS.
\par Assume that $cl$ is not injective, i.e. $cl(\alpha')=cl(\alpha'')=\alpha$ for some $\alpha'\neq\alpha''\in R$.

\par It is easy to verify, using Corollary \ref{weakclass}, that unless $cl(R)\cong C(1,1),(\alpha,\alpha)=0$ then there exists $\beta\in cl(R)$ such that $(\alpha,\beta)\neq0$ and $t_{\alpha,\beta}$ is defined and non-zero. By Lemma \ref{lem1} $cl^{-1}(\beta)$ is infinite, a contradiction.

\par If $cl(R)\cong C(1,1),(\alpha,\alpha)=0$ assume $\alpha=\epsilon_{1}+\delta_{1}$ (the other cases are done in the same way). We have $\alpha'\neq\alpha''\in R'$ such that $cl(\alpha')=cl(\alpha'')=\epsilon_{1}+\delta_{1}$, denote $\alpha''-\alpha'=\delta$. Choose some $\beta'\in R$ such that $cl(\beta')=\epsilon_{1}-\delta_{1}$. One has that one of $\alpha'\pm\beta'\in R$ and so $r_{\alpha'\pm\beta'}(\alpha'')=\mp\beta'-\delta\in R$. So we have $|cl^{-1}(\epsilon_1+\delta_{1})|=|cl^{-1}(\epsilon_1-\delta_{1})|\geq2$.
\par If $|cl^{-1}(\epsilon_1+\delta_{1})|,|cl^{-1}(\epsilon_1-\delta_{1})|=2$ we have $R\cong \tilde{A}(1,1)$. Assume that there exists $\delta\neq\delta'\in ker(-,-)$ such that $\alpha'+\delta'\in R$. Then we have:
\par $r_{\beta'}(\alpha')\in\{\alpha'+\beta',\alpha'-\beta'\}$;
\par $r_{\beta'}(\alpha'+\delta)\in\{\alpha'+\beta'+\delta,\alpha'-\beta'+\delta\}$;
\par $r_{\beta'}(\alpha'+\delta')\in\{\alpha'+\beta'+\delta',\alpha'-\beta'+\delta'\}$.
\par So at least one of $cl^{-1}(\alpha+\beta),cl^{-1}(\alpha-\beta)$ has cardinality greater then 1. As both $\alpha'+\beta'$ and $\alpha'-\beta'$ are non-isotropic we are back the previous case and $R$ is infinite.

\par We conclude that, unless $R\cong \tilde{A}(1,1)$, $cl:R\to cl(R)$ is a bijection. Let $R\ncong \tilde{A}(1,1)$.
\par Assume $cl(R)$ has the form (16) or (17) (i.e. it is a weak GRS but not a GRS). As both $\epsilon_{i}+\delta_{j}+(\epsilon_{i}-\delta_{j})$ and $\epsilon_{i}+\delta_{j}-(\epsilon_{i}-\delta_{j})$ are roots in $cl(R)$, we have both $cl^{-1}(\epsilon_{i}+\delta_{j})+cl^{-1}(\epsilon_{i}-\delta_{j})$ and $cl^{-1}(\epsilon_{i}+\delta_{j})-cl^{-1}(\epsilon_{i}-\delta_{j})$ are roots in the AGRS $R$, a contradiction. So $cl(R)$ is a GRS.
\par Assume $cl(R)$ has a set of simple roots $\{\alpha_{i}\}_{i=1}^{n}$. As $\{\alpha_{i}\}_{i=1}^{n}$ are linearly independent clearly $\{cl^{-1}(\alpha_{i})\}_{i=1}^{n}$ are linearly independent, and so $\{cl^{-1}(\alpha_{i})\}_{i=1}^{n}$ is a basis in $V$. So the map $\tilde{cl}:V\to \tilde{cl}(V)$ maps the basis $\{cl^{-1}(\alpha_{i})\}_{i=1}^{n}$ to the basis $\{\alpha_{i}\}_{i=1}^{n}$, and so it is an isomorphism of the vector spaces $V$ and $\tilde{cl}(V)$. But $dim(V)-dim(\tilde{cl}(V))=dim(ker(-,-))=1$, a contradiction. Thus $cl(R)$ has no set of simple roots.
\par From the classification one sees that the only GRS that has no set of simple roots is $A(n,n)$ with $n>1$, and so $cl(R)\cong A(n,n)$. Thus there exists an isomorphism $\phi:span_{\mathbb{C}}A(n,n)\to span_{\mathbb{C}}cl(R)$. As $cl$ is injective the following $\tilde{\phi}:span_{\mathbb{C}}\tilde{A}(n,n)\to span_{\mathbb{C}}R$ is an isomorphism of $\tilde{A}(n,n)$ and $R$:
$$\tilde{\phi}(\epsilon_{i}-\epsilon_{j})=cl^{-1}(\phi(\epsilon_{i}-\epsilon_{j})),\tilde{\phi}(\delta_{i}-\delta_{j})=cl^{-1}(\phi(\delta_{i}-\delta_{j})),\tilde{\phi}(\epsilon_{i}-\delta_{j})=cl^{-1}(\phi(\epsilon_{i}-\delta_{j})),$$
i.e. $R\cong \tilde{A}(n,n)$.
\end{proof}

\section{Step \romannumeral2: The function $k$}

\subsection{}\label{nonisostr}
\begin{lem}{nonisostr}
Let $R$ be an irreducible AGRS, $\alpha\in cl(R)$ and $(\alpha,\alpha)\neq0$. Then for any $\delta_{\alpha}$ satisfying axiom (4) there exists a unique $\tilde{k}_{\delta_{\alpha}}(\alpha)\in\mathbb{Z}_{\geq0}$ such that $$cl^{-1}(\alpha)=\{\alpha'+\mathbb{Z}\tilde{k}_{\delta_{\alpha}}(\alpha)\delta_{\alpha}\}$$ for every $\alpha'\in R$ satisfying $cl(\alpha')=\alpha$.
\end{lem}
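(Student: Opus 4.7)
My plan is to reduce the lemma to showing that a certain subset of the cyclic group $\mathbb{Z}\delta_\alpha$ is a subgroup. Fix any $\alpha_0 \in cl^{-1}(\alpha)$ and set
$$ S := \{x \in \mathbb{Z}\delta_\alpha : \alpha_0 + x \in R\}. $$
Axiom (4) gives $cl^{-1}(\alpha) = \alpha_0 + S$, and since every subgroup of $\mathbb{Z}\delta_\alpha$ has a unique nonnegative generator, it suffices to prove that $S$ is a subgroup. The statement's independence of $\alpha'$ is then automatic because replacing $\alpha_0$ by another element of $cl^{-1}(\alpha)$ only translates $S$ within itself.

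The key tool is Proposition \ref{lem1} applied with $\beta = \alpha$: this is permitted because $(\alpha,\alpha) \neq 0$, and it gives $t_{\alpha,\alpha} = 2$. For any $x, y \in S$, taking $\alpha' = \alpha_0$, $\alpha'' = \alpha_0 + x$, $\beta' = \alpha_0 + y$ yields
$$ (r_{\alpha_0+x}\, r_{\alpha_0})^m(\alpha_0 + y) = \alpha_0 + y + 2mx \in R $$
for all $m \in \mathbb{Z}$, so $y + 2mx \in S$. Choosing $m = -1$ and $y = x$ gives $-x \in S$, so $S = -S$. Applying the identity successively for different choices of $x \in S$ then shows $S + 2H \subseteq S$, where $H$ denotes the subgroup of $\mathbb{Z}\delta_\alpha$ generated by $S$.

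If $H = \{0\}$ then $\tilde{k}_{\delta_\alpha}(\alpha) := 0$ works. Otherwise write $H = k\mathbb{Z}\delta_\alpha$ with $k > 0$. Taking $y = 0$ in $S + 2H \subseteq S$ shows every even multiple of $k\delta_\alpha$ lies in $S$. Moreover $S$ must contain some odd multiple $s_0 = (2j+1)k\delta_\alpha$, since otherwise $S \subseteq 2k\mathbb{Z}\delta_\alpha$ and $S$ could not generate $H = k\mathbb{Z}\delta_\alpha$. Then $s_0 + 2H \subseteq S$ supplies every odd multiple of $k\delta_\alpha$. Combining, $S = H = k\mathbb{Z}\delta_\alpha$, so $\tilde{k}_{\delta_\alpha}(\alpha) = k$, and uniqueness is immediate.

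The subtle point I expect to be the main obstacle is precisely this last parity argument: the composition $r_{\alpha_0+x}\, r_{\alpha_0}$ only translates in the $\delta_\alpha$-direction by the \emph{even} amount $2x$, so \emph{a priori} $S$ could be a proper sublattice of index two inside $H$. The resolution lies in the observation that $S$ itself must generate $H$ as a group and therefore cannot be contained in the even sublattice---this is the single extra input needed beyond $S + 2H \subseteq S$ to upgrade the inclusion to the equality $S = H$.
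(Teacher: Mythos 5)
Your proof is correct and follows essentially the same route as the paper: both hinge on Proposition \ref{lem1} with $\beta=\alpha$ and $t_{\alpha,\alpha}=2$, and both resolve the resulting factor-of-two issue by exploiting a second root at an odd multiple of the generator (the paper does this concretely by applying $(r_{\alpha'''}r_{\alpha''})^m$ to both $\alpha''$ and $\alpha'''$, where $\alpha'''-\alpha''$ is the minimal positive difference; you do it via the observation that $S$ must generate $H$). Your subgroup packaging is a clean reorganization of the same argument, not a different method.
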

\begin{proof}
Fix some $\delta_{\alpha}$ satisfying axiom (4).
\par If $|cl^{-1}(\alpha)|=1$ then $\tilde{k}_{\delta_{\alpha}}=0$ and the claim holds.
\par Otherwise we take some $\alpha'\in cl^{-1}(\alpha)$. Choose some $\alpha''\neq\alpha'''\in cl^{-1}(\alpha)$ such that $\alpha'''-\alpha''=k\delta_{\alpha}$ is such that $k$ is the minimal positive integer possible (as $|cl^{-1}(\alpha)|>1$ there exist such two roots, and by minimality $k$ is unique). By Proposition \ref{lem1} for all $m\in\mathbb{Z}_{\geq0}$ ($t_{\alpha,\alpha}=2$):
\par $(r_{\alpha'''}r_{\alpha''})^m (\alpha'')=\alpha''+2m(\alpha'''-\alpha'')=\alpha''+2mk\delta_{\alpha}\in R$;
\par $(r_{\alpha'''}r_{\alpha''})^m (\alpha''')=\alpha'''+2m(\alpha'''-\alpha'')=\alpha''+(2m+1)k\delta_{\alpha}\in R$;
\par $(r_{\alpha''}r_{\alpha'''})^m (\alpha'')=\alpha''+2m(\alpha''-\alpha''')=\alpha''-2mk\delta_{\alpha}\in R$;
\par $(r_{\alpha''}r_{\alpha'''})^m (\alpha''')=\alpha'''+2m(\alpha''-\alpha''')=\alpha''-(2m-1)k\delta_{\alpha}\in R$,
\\* and so $cl^{-1}(\alpha)\supseteq\{\alpha''+ks\delta_{\alpha}\}_{s\in\mathbb{Z}}$.

\par By axiom (4) we have $cl^{-1}(\alpha)\subseteq\{\alpha''+s\delta_{\alpha}\}_{s\in\mathbb{Z}}$. Assume
$cl^{-1}(\alpha)\not\subseteq\{\alpha''+ks\delta_{\alpha}\}_{s\in\mathbb{Z}}$, then there exist $s',q\in\mathbb{Z}$ and $0<q<k$ such that $\alpha''+(ks'+q)\delta_{\alpha}\in R$. But $\alpha''+ks'\delta_{\alpha}\in R$ and so $(\alpha''+(ks'+q)\delta_{\alpha})-(\alpha''+ks'\delta_{\alpha})=q\delta_{\alpha}$ (recall $0<q<k$) contradicts the minimality of $k$, and so $cl^{-1}(\alpha)\subseteq\{\alpha''+ks\delta_{\alpha}\}_{s\in\mathbb{Z}}$.

\par Altogether $cl^{-1}(\alpha)\supseteq\{\alpha''+ks\delta_{\alpha}\}_{s\in\mathbb{Z}}$ and $cl^{-1}(\alpha)\subseteq\{\alpha''+ks\delta_{\alpha}\}_{s\in\mathbb{Z}}$, and so $cl^{-1}(\alpha)=\{\alpha''+ks\delta_{\alpha}\}_{s\in\mathbb{Z}}$.

\par Finally, as $cl(\alpha')=cl(\alpha'')$ we also have $cl^{-1}(\alpha)=\{\alpha''+ks\delta_{\alpha}\}_{s\in\mathbb{Z}}=\{\alpha'+ks\delta_{\alpha}\}_{s\in\mathbb{Z}}$.
\par We denote this $k$ by $\tilde{k}_{\delta_{\alpha}}(\alpha)$.
\end{proof}

\subsection{}\label{cmn11}
\begin{lem}{cmn11}
Let $R$ be an AGRS satisfying $cl(R)\cong C(1,1)=\{\pm\epsilon'_{1},\pm\delta'_{1},\pm2\epsilon'_{1}\pm2\delta'_{1}\}$. Then either $R\cong \tilde{A}(1,1)$ or $R$ is peculiar (see \ref{peculiar}).
\end{lem}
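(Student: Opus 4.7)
The plan is to determine all eight fibres $cl^{-1}(\bar\alpha)$, $\bar\alpha\in C(1,1)$, by systematic use of axiom~(2) (with Serganova's uniqueness \cite{VGRS}) and Proposition~\ref{lem1}. Following Definition~\ref{peculiar}, pick lifts $\epsilon_1,\delta_1\in V$ with $(\epsilon_1,\epsilon_1)=-(\delta_1,\delta_1)=1$, $(\epsilon_1,\delta_1)=0$, and a generator $\delta$ of $\ker(-,-)$; write $\bar x:=cl(x)$. By rescaling $\epsilon_1$ and replacing $\delta_1$ by $\delta_1+t\delta$ for appropriate $t$, arrange that $2\epsilon_1\in R$ and $\gamma_{++}:=\epsilon_1+\delta_1\in R$. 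Since $3\epsilon_1+\delta_1\notin cl(R)$, the odd reflection $r_{\gamma_{++}}(2\epsilon_1)$ forces $\gamma_{+-}:=\epsilon_1-\delta_1\in R$; then $r_{\gamma_{++}}(\gamma_{+-})$ gives $2\epsilon_1\in R$ and (by Serganova's uniqueness) $-2\delta_1\notin R$, so every lift of $2\bar\delta_1$ has nonzero $\delta$-offset. I split on whether $|cl^{-1}(2\bar\epsilon_1)|>1$; in the sub-case $|cl^{-1}(2\bar\epsilon_1)|=1$, $|cl^{-1}(2\bar\delta_1)|>1$, the isomorphism $\epsilon_1\leftrightarrow\delta_1$ with form-scaling $a=-1$ reduces to Case~2.

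\textbf{Case 1: both non-isotropic fibres are singletons.} Write $cl^{-1}(2\bar\delta_1)=\{2\delta_1+r\delta\}$, $r\neq0$, and rescale $\delta$ so $r=1$. The odd reflections $r_{\gamma_{+-}}(2\delta_1+\delta)$ and $r_{\gamma_{++}}(2\delta_1+\delta)$ (their alternative $\pm$ options have $cl$-image outside $C(1,1)$) produce, after the second is negated via $R=-R$, the roots $\epsilon_1+\delta_1+\delta$ and $\epsilon_1-\delta_1-\delta$, so 12 elements in total. A hypothetical extra element $\epsilon_1+\delta_1+t\delta\in R$, $t\notin\{0,1\}$, is sent by the even reflection $r_{2\epsilon_1}$ to $-\epsilon_1+\delta_1+t\delta\in R$, hence $\epsilon_1-\delta_1-t\delta\in R$ by $R=-R$; reflecting this back off $\gamma_{++}$, axiom~(2) requires exactly one of $2\epsilon_1-t\delta$ (in $R$ iff $t=0$) or $-2\delta_1-t\delta$ (in $R$ iff $t=1$) to lie in $R$, both of which fail. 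Analogously for the other isotropic fibres, so $R$ consists of exactly these 12 elements, which is $\tilde A(1,1)$.

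\textbf{Case 2: $|cl^{-1}(2\bar\epsilon_1)|>1$.} By Lemma~\ref{nonisostr}, after rescaling $\delta$, $cl^{-1}(2\bar\epsilon_1)=2\epsilon_1+\mathbb{Z}\delta$. Proposition~\ref{lem1} with $\alpha=2\bar\epsilon_1$ and $\beta=\bar\epsilon_1\pm\bar\delta_1$ (each gives $t=1$) yields $cl^{-1}(\bar\epsilon_1\pm\bar\delta_1)\supseteq(\epsilon_1\pm\delta_1)+\mathbb{Z}\delta$. Pick any $2\delta_1+q\delta\in R$; then $q\notin\mathbb{Z}$. The odd reflections $r_{\gamma_{+-}}(2\delta_1+q\delta)$ and $r_{\gamma_{++}}(2\delta_1+q\delta)$ (the latter negated by $R=-R$) give $\epsilon_1+\delta_1+q\delta,\ \epsilon_1-\delta_1-q\delta\in R$; combined with the $\mathbb{Z}\delta$-translates already known, this yields
\[
cl^{-1}(\bar\epsilon_1+\bar\delta_1)\supseteq(\epsilon_1+\delta_1)+(\mathbb{Z}\cup(q+\mathbb{Z}))\delta,\quad cl^{-1}(\bar\epsilon_1-\bar\delta_1)\supseteq(\epsilon_1-\delta_1)+(\mathbb{Z}\cup(-q+\mathbb{Z}))\delta,
\]
and axiom~(4) applied to either fibre forces $q\in\mathbb{Q}$.

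The principal obstacle is determining the step of $cl^{-1}(2\bar\delta_1)=2\delta_1+(q+\mathbb{Z}c)\delta$: Proposition~\ref{lem1} cannot be invoked with $\beta=2\bar\delta_1$, since for every isotropic $\alpha\in C(1,1)$ one of $\beta\pm2\alpha$ lies in $cl(R)$. I show $c=1$ in two steps. First, reflecting $\epsilon_1+\delta_1+q\delta\in R$ off $\epsilon_1-\delta_1+\delta\in R$, axiom~(2) requires exactly one of $2\epsilon_1+(q+1)\delta$ (requires $q\in\mathbb{Z}$, false) or $2\delta_1+(q-1)\delta$ (requires $-1\in\mathbb{Z}c$, i.e., $c=1/n$ for some $n\in\mathbb{Z}_{\geq1}$) to lie in $R$; so $c=1/n$. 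Second, assume $n\geq2$. Then $q\notin(1/n)\mathbb{Z}$ (else $2\delta_1\in R$), and $2\delta_1+(q+c)\delta\in R$ gives $\epsilon_1-\delta_1-(q+c)\delta\in R$; reflecting $\epsilon_1+\delta_1+q\delta$ off this, axiom~(2) requires exactly one of $2\epsilon_1-c\delta$ (requires $c\in\mathbb{Z}$, false) or $2\delta_1+(2q+c)\delta$ (requires $q\in(1/n)\mathbb{Z}$, false) to lie in $R$, both of which fail. Hence $c=1$, and $cl^{-1}(2\bar\delta_1)=2\delta_1+(q+\mathbb{Z})\delta$. The same reflection test applied to a hypothetical extra element $\epsilon_1+\delta_1+t\delta\in R$ with $t\notin\mathbb{Z}\cup(q+\mathbb{Z})$ again yields two failing $\pm$ candidates, so all inclusions above are equalities. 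Normalizing $q\in(0,1)\cap\mathbb{Q}$ via $C(1,1)^q\cong C(1,1)^{q+n}$ identifies $R\cong C(1,1)^q$, so $R$ is peculiar.
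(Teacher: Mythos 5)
Your proof is correct and follows essentially the same strategy as the paper's: both arguments pin down each fibre $cl^{-1}(\bar\alpha)$ by combining Lemma~\ref{nonisostr} for the non-isotropic fibres with the uniqueness of odd reflections, which forces the offset set of $cl^{-1}(\bar\epsilon_1+\bar\delta_1)$ to be the disjoint union of those of $cl^{-1}(2\bar\epsilon_1)$ and $cl^{-1}(2\bar\delta_1)$ (and similarly for $\bar\epsilon_1-\bar\delta_1$ with a sign); your explicit case split on singleton versus infinite non-isotropic fibres, and your two reflection tests forcing $c=1$, replace the paper's appeal to Lemma~\ref{lem2} and its $k\mid k'$, $k'\mid k$ divisibility argument, but the content is the same. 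The one assertion left unjustified is ``$q\notin\mathbb{Z}$'' in Case~2 --- the setup only gives $2\delta_1\notin R$, i.e.\ $q\neq 0$ --- but it follows by the same uniqueness move you already use: for each $n\in\mathbb{Z}$ one has $\epsilon_1-\delta_1-n\delta\in R$, and $r_{\gamma_{++}}$ sends it to exactly one of $2\epsilon_1-n\delta$ and $-2\delta_1-n\delta$; since the former lies in $R$ for every $n$ in Case~2, the latter never does, so this deserves one explicit line.
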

\begin{proof}
If $R\ncong \tilde{A}(1,1)$ then by Lemma \ref{lem2} it is infinite.
\par Fix two roots $\alpha_{1}\in cl^{-1}(\epsilon'_{1}-\delta'_{1}),\alpha_{2}\in cl^{-1}(2\epsilon'_{1})$. Setting $\epsilon_{1}:=\frac{1}{2}\alpha_{2},\delta_{1}:=\frac{1}{2}\alpha_{2}-\alpha_{1}$, we obtain that $\epsilon_{1}-\delta_{1},2\epsilon_{1}\in R$. Note that $\epsilon_{1}+\delta_{1}\in R$ as well, since $-r_{2\epsilon_{1}}(\epsilon_{1}-\delta_{1})=\epsilon_{1}+\delta_{1}$.
\par Moreover, $cl^{-1}(2\delta'_{1})\subset 2\delta_{1}+ker(-,-)$. We identify $cl(R)$ with $\{\pm\epsilon_{1},\pm\delta_{1},\pm2\epsilon_{1}\pm2\delta_{1}\}$.
\par We claim that $cl^{-1}(\epsilon_{1}+\delta_{1})$ is infinite. Indeed, as $R$ is infinite at least one of $cl^{-1}(\epsilon_{1}+\delta_{1}),cl^{-1}(\epsilon_{1}-\delta_{1}),cl^{-1}(2\epsilon_{1}),cl^{-1}(2\delta_{1})$ is infinite. If $cl^{-1}(\epsilon_{1}+\delta_{1})$ is infinite there is nothing to prove. If $cl^{-1}(2\epsilon_{1})$ is infinite we take two roots $\beta'\neq\beta''\in cl^{-1}(2\epsilon_{1})$ and then for every $m\in\mathbb{N}$, by Lemma \ref{lem1}
$$(r_{\beta''}r_{\beta'})^{m}(\epsilon_{1}+\delta_{1})=\epsilon_{1}+\delta_{1}+m(\beta''-\beta')\in cl^{-1}(\epsilon_{1}+\delta_{1}),$$
and so $cl^{-1}(\epsilon_{1}+\delta_{1})$ is infinite (the case $cl^{-1}(2\delta_{1})$ is infinite is similar). If $cl^{-1}(\epsilon_{1}-\delta_{1})$ is infinite, then as $r_{\epsilon_{1}-\delta_{1}+s\delta}(\epsilon_{1}+\delta_{1})\in\{2\epsilon_{1}+s\delta,2\delta_{1}+s\delta\}$
at least one of $cl^{-1}(2\epsilon_{1}),cl^{-1}(2\delta_{1})$ is infinite as well.
\par So we know $cl^{-1}(\epsilon_{1}+\delta_{1})$ is infinite. Choose $\delta\in ker(-,-)$ such that $cl^{-1}(\epsilon'_{1}+\delta'_{1})\subset \epsilon_{1}+\delta_{1}+\mathbb{Z}\delta$.
For every $\alpha\in cl(R)$ set $X(\alpha)=\{x\in\mathbb{C}|\alpha+x\delta\in R\}$.
\par By above, $X(\epsilon_{1}+\delta_{1})\in\mathbb{Z}$.
\par For every $s\in X(\epsilon_{1}+\delta_{1})$ one has
$$r_{\epsilon_{1}-\delta_{1}}(\epsilon_{1}+\delta_{1}+s\delta)\in\{2\epsilon_{1}+s\delta,2\delta_{1}+s\delta\},$$ so $X(\epsilon_{1}+\delta_{1})\subseteq X(2\epsilon_{1})\cup X(2\delta_{1})$.
\par Moreover,
$$r_{\epsilon_{1}-\delta_{1}}(cl^{-1}(2\epsilon_{1})),r_{\epsilon_{1}-\delta_{1}}(cl^{-1}(2\delta_{1}))\subseteq cl^{-1}(\epsilon_{1}+\delta_{1}),$$
so $X(\epsilon_{1}+\delta_{1})\supseteq X(2\epsilon_{1})\cup X(2\delta_{1})$ and then $X(\epsilon_{1}+\delta_{1})=X(2\epsilon_{1})\cup X(2\delta_{1})$.
\par As $r_{\epsilon_{1}-\delta_{1}}$ is invertible
$$X(\epsilon_{1}+\delta_{1})=X(2\epsilon_{1})\sqcup X(2\delta_{1}).$$
Similarly,
$$X(\epsilon_{1}-\delta_{1})=X(2\epsilon_{1})\sqcup X(-2\delta_{1})=X(2\epsilon_{1})\sqcup -X(2\delta_{1}).$$
\par In particular, $X(2\epsilon_{1}),X(2\delta_{1}),X(\epsilon_{1}-\delta_{1})\subset\mathbb{Z}$, so $X(\alpha)\in\mathbb{Z}$ for every $\alpha\in cl(R)$.
Since $2\epsilon_{1},2\delta_{1}$ are non isotropic and as $2\epsilon_{1}\in R$, by Lemma \ref{nonisostr} one has
$$X(2\epsilon_{1})=k\mathbb{Z},X(2\delta_{1})=l+k'\mathbb{Z},$$
for some $k,k',l\in\mathbb{Z},0\leq l<k'$, and at least one of $k,k'$ is non zero. So we have
$$X(\epsilon_{1}+\delta_{1})=\{k\mathbb{Z}\}\sqcup\{l+k'\mathbb{Z}\};X(\epsilon_{1}-\delta_{1})=\{k\mathbb{Z}\}\sqcup\{-l+k'\mathbb{Z}\}.$$
As $r_{\epsilon_{1}-\delta_{1}+s'\delta}(2\epsilon_{1}+s\delta)=\epsilon_{1}+\delta_{1}+(s-s')\delta$ we have
$$X(2\epsilon_{1})-X(\epsilon_{1}-\delta_{1})\subset X(\epsilon_{1}+\delta_{1}),$$
that is
$$k\mathbb{Z}+(k\mathbb{Z}\sqcup\{l+k'\mathbb{Z}\})=k\mathbb{Z}\cup\{l+k\mathbb{Z}+k'\mathbb{Z}\}\subset k\mathbb{Z}\sqcup\{l+k'\mathbb{Z}\},$$
that is
$$l+k\mathbb{Z}+k'\mathbb{Z}\subset k\mathbb{Z}\sqcup\{l+k'\mathbb{Z}\}.$$
\par Since $k\mathbb{Z}\cap\{l+k'\mathbb{Z}\}=\emptyset$, we get $k'\mathbb{Z}\cap\{l+k\mathbb{Z}\}=\emptyset$, and so $\{l+k\mathbb{Z}\}\subset\{l+k'\mathbb{Z}\}$, that is $k'\neq0$ and $k$ is divisible by $k'$.
\par Similarly, as $r_{\epsilon_{1}-\delta_{1}+s'\delta}(2\delta_{1}+s\delta)=\epsilon_{1}+\delta_{1}+(s+s')\delta$, we have $X(2\delta_{1})+X(\epsilon_{1}-\delta_{1})\subset X(\epsilon_{1}+\delta{1})$, that is
$$\{l+k'\mathbb{Z}\}+(\{k\mathbb{Z}\sqcup\{-l+k'\mathbb{Z}\}\})\subset(\{k\mathbb{Z}\}\sqcup\{l+k'\mathbb{Z}\}).$$
So we also have $k'\mathbb{Z}\subset k\mathbb{Z}\sqcup\{l+k'\mathbb{Z}\}$, so $k'$ is divisible by $k$, and we conclude that $k=k'$.
\par As $k\mathbb{Z}\cap(l+k\mathbb{Z})=\emptyset$ we have $0<l<k$.
\par So we have
$$R=\{\pm(2\epsilon_{1}+\mathbb{Z}k\delta);\pm(2\delta_{1}+(l+\mathbb{Z}k)\delta);$$ $$\pm(\epsilon_{1}+\delta_{1}+(l+\mathbb{Z}k)\delta);\pm(\epsilon_{1}+\delta_{1}+\mathbb{Z}k\delta);\pm(\epsilon_{1}-\delta_{1}-(l+\mathbb{Z}k)\delta);\pm(\epsilon_{1}-\delta_{1}+\mathbb{Z}k\delta)\}.$$

\par Finally, apply the isomorphism $$\epsilon_{1}\mapsto\epsilon_{1};\delta_{1}\mapsto\delta_{1};\delta\mapsto\frac{\delta}{k}$$ to get the required form, when $q=\frac{l}{k}$.
\end{proof}

\subsection{}\label{c11almostends}
\begin{lem}{c11almostends}
The peculiar AGRSs are the rational quotients of $\tilde{A}(1,1)^{(1)}$.
\end{lem}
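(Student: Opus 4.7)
The plan is to exhibit, for each rational $q\in(0,1)$, an explicit linear isomorphism $C(1,1)^q \cong \tilde{A}(1,1)^{(1)}_q$, and thereby identify the peculiar AGRSs with the rational quotients of $\tilde{A}(1,1)^{(1)}$. First I would set up convenient coordinates inside the quotient. Using the notation of Definitions~\ref{Ann} and \ref{quodef}, set $e := \tfrac{1}{2}(\epsilon_1 - \epsilon_2)$ and $d := \tfrac{1}{2}(\delta_1 - \delta_2)$, so that the relation $Id + q\delta = 0$ in $V''/\mathbb{C}(q\delta + Id)$ becomes $\tfrac{1}{2}(\epsilon_1 + \epsilon_2) - \tfrac{1}{2}(\delta_1 + \delta_2) = -\tfrac{q}{2}\delta$. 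From this I would derive the coordinate expressions $\epsilon_1 - \delta_1 = e - d - \tfrac{q}{2}\delta$, $\epsilon_1 - \delta_2 = e + d - \tfrac{q}{2}\delta$, and their permutations, together with the quotient identification $\epsilon_1 - \delta_1 \equiv -(\epsilon_2 - \delta_2) - q\delta$. These identities show that $\tilde{A}(1,1)^{(1)}_q$ is contained in the three-dimensional subspace $span_{\mathbb{C}}\{e, d, \delta\}$, matching the dimension of the ambient space of $C(1,1)^q$.

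The second step is to write down the candidate isomorphism $\psi \colon V_{C(1,1)^q} \to span_{\mathbb{C}}\{e, d, \delta\}$ defined on the basis of Definition~\ref{peculiar} by
\[
\psi(\epsilon_1) = d,\qquad \psi(\delta_1) = e - \tfrac{q}{2}\delta,\qquad \psi(\delta) = \delta.
\]
A direct computation yields $(\psi(v), \psi(w)) = -\tfrac{1}{2}(v, w)$ for all $v, w$, which is enough for an isomorphism of systems (any nonzero scalar is allowed). Then for each of the six families listed in Definition~\ref{peculiar} I would verify that $\psi$ maps it into $\tilde{A}(1,1)^{(1)}_q$ and that every family of the target is reached. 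The only nontrivial substitutions are on the isotropic families: for example, $\psi(\epsilon_1 + \delta_1 + (q+n)\delta) = (e + d) + (\tfrac{q}{2} + n)\delta = (\epsilon_1 - \delta_2) + (q + n)\delta$ by the first step, and this lies in $\tilde{A}(1,1)^{(1)}_q$ after invoking $\epsilon_1 - \delta_2 \equiv -(\epsilon_2 - \delta_1) - q\delta$ to rewrite it as $-(\epsilon_2 - \delta_1) + n\delta$. Every other family is handled by analogous substitutions.

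The step demanding the most foresight is the choice of $\psi$. A naive guess $\epsilon_1 \mapsto e$, $\delta_1 \mapsto d$ cannot work: in the quotient both $\pm 2e + \mathbb{Z}\delta$ and $\pm 2d + \mathbb{Z}\delta$ have integer $\delta$-shifts, whereas in $C(1,1)^q$ the non-isotropic families $\pm 2\epsilon_1$ and $\pm 2\delta_1$ have shift sets $\mathbb{Z}$ and $q + \mathbb{Z}$ respectively, so any direct identification is forced to fail for the $2\delta_1$ family. The correct remedy is to swap the two non-isotropic directions (so $\epsilon_1 \mapsto d$ and $\delta_1 \mapsto e$) and to absorb the $q$-discrepancy by translating $\psi(\delta_1)$ by $-\tfrac{q}{2}\delta$. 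This swap is responsible for the sign in the form scalar $-\tfrac{1}{2}$, while the translation is precisely what cancels the $q$-shift on the image of $2\delta_1$ (producing $\pm(\epsilon_1 - \epsilon_2) + \mathbb{Z}\delta$) and simultaneously generates the required $q$-shifts on the images of the four isotropic families. Once $\psi$ is in hand the rest is bookkeeping, and its inverse realizes every rational quotient $\tilde{A}(1,1)^{(1)}_q$ (for rational $q\in(0,1)$) as a peculiar AGRS, completing both directions of the statement.
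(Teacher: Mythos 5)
Your proposal is correct and follows essentially the same route as the paper: both rewrite $\tilde{A}(1,1)^{(1)}_q$ in coordinates adapted to the quotient relation $Id+q\delta=0$ and then exhibit a linear map matching the six families of $C(1,1)^q$, with the bilinear form rescaled by a nonzero constant (your $-\tfrac{1}{2}$ versus the paper's $\tfrac{1}{2}$, the sign coming from your swap of the two non-isotropic directions). Your version merely makes explicit the isomorphism that the paper leaves implicit in the phrase ``by scaling the bilinear form by a factor 2.''
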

\begin{proof}
By definition $\tilde{A}(1,1)^{(1)}$ is given by
$$R=\{\pm(\epsilon_{1}-\epsilon_{2})+\mathbb{Z}\delta,\pm(\delta_{1}-\delta_{2})+\mathbb{Z}\delta\}\cup\{\pm(\epsilon_{i}-\delta_{j})+\mathbb{Z}\delta\}_{i,j\in\{1,2\}}$$
when $(\epsilon_{i},\epsilon_{j})=-(\delta_{i},\delta_{j})=\delta_{i,j},(\epsilon_{i},\delta_{j})=(\epsilon_{i},\delta)=(\delta_{i},\delta)=0$. \par A rational quotient $\tilde{A}(1,1)_{q}^{(1)}$ has the form
$$\{\pm(\epsilon_{1}-\epsilon_{2})+\mathbb{Z}\delta,\pm(\epsilon_{1}+\epsilon_{2}-2\delta_{2})+(q+\mathbb{Z})\delta\}\cup$$ $$\{\pm(\epsilon_{1}-\delta_{1})+\mathbb{Z}\delta,\pm(\epsilon_{1}-\delta_{1})+(q+\mathbb{Z})\delta,\pm(\delta_{2}-\epsilon_{1})+\mathbb{Z}\delta,\pm(\delta_{2}-\epsilon_{1})+(-q+\mathbb{Z})\delta\},$$
and by scaling the bilinear form by a factor 2 it is clear that $\tilde{A}(1,1)_{q}^{(1)}\cong C(1,1)^{q}$.
\par By taking any rational $0<q<1$ we exhaust in this manner all peculiar AGRSs.
\end{proof}

\subsection{}\label{c11ends}
\begin{cor}{c11ends}
Let $R$ be an AGRS such that $cl(R)\cong C(1,1)$, then either $R\cong \tilde{A}(1,1)$ or $R$ is a rational quotient of $\tilde{A}(1,1)^{(1)}$. Moreover, any rational quotient of $\tilde{A}(1,1)^{(1)}$ is an AGRS and $cl(\tilde{A}(1,1)_{q}^{(1)})\cong C(1,1)$.
\end{cor}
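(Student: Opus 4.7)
The plan is to obtain the first (dichotomy) statement by chaining the two immediately preceding lemmas, and the \emph{moreover} clause by a direct check of the AGRS axioms on the explicit form of $\tilde{A}(1,1)^{(1)}_q$ given in Definition \ref{quodef}.

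For the first statement, Lemma \ref{cmn11} forces $R$ to be either $\tilde{A}(1,1)$ or peculiar, and Lemma \ref{c11almostends} identifies the peculiar AGRSs with the rational quotients of $\tilde{A}(1,1)^{(1)}$. Composing the two gives the claim at once.

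For the \emph{moreover} clause, the isomorphism $\tilde{A}(1,1)^{(1)}_q \cong C(1,1)^q$ established inside the proof of Lemma \ref{c11almostends} (rescale the form by a factor of $2$ and relabel $\delta$) reduces the problem to verifying that $C(1,1)^q$, for rational $q$ with $0 \leq \operatorname{Re}(q) < 1$, is an AGRS satisfying $cl(C(1,1)^q) \cong C(1,1)$. Axioms (0'), (3), (4) follow immediately from the explicit list in Definition \ref{peculiar}: $\ker(-,-) = \mathbb{C}\delta$ is one-dimensional and disjoint from $R$, the set $cl(R) = \{\pm 2\epsilon_1,\, \pm 2\delta_1,\, \pm(\epsilon_1 \pm \delta_1)\}$ consists of $12$ vectors and coincides with $C(1,1)$, and each fibre $cl^{-1}(\alpha)$ is a single $\mathbb{Z}\delta$-coset. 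Axiom (1) is a finite case check on the non-isotropic roots $\pm 2\epsilon_1 + k\delta$ and $\pm 2\delta_1 + (q+k)\delta$. Axiom (2) requires, for each isotropic root $\alpha$ of one of the forms $\pm(\epsilon_1 \pm \delta_1) + \ast\delta$, defining an involution $r_\alpha: R \to R$ with the prescribed values.

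The main obstacle is axiom (2): adding an isotropic root to a root shifts its $\delta$-coefficient between the \emph{plain} shell $\mathbb{Z}\delta$ and the \emph{twisted} shells $(\pm q + \mathbb{Z})\delta$, so one must track carefully which shell each sum lands in to verify that exactly one of $\beta \pm \alpha$ belongs to $R$, and then set $r_\alpha(\beta)$ equal to that element (with $r_\alpha(\pm\alpha) = \mp\alpha$ and $r_\alpha(\beta) = \beta$ when $(\alpha,\beta) = 0$ and $\beta \neq \pm\alpha$). Only finitely many combinations of shell and sign arise, and the check succeeds precisely because the shifts $+q$ and $-q$ have been placed symmetrically across the isotropic roots in Definition \ref{peculiar}.
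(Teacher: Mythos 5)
Your proposal matches the paper's (implicit) argument exactly: the corollary is stated without proof precisely because it is the composition of Lemma \ref{cmn11} with Lemma \ref{c11almostends}, and the \emph{moreover} clause is delegated to the identification $\tilde{A}(1,1)_{q}^{(1)}\cong C(1,1)^{q}$ together with the axiom check on $C(1,1)^{q}$ asserted in Definition \ref{peculiar}, which you carry out in more detail than the paper does. One trivial slip: $cl(C(1,1)^{q})=\{\pm2\epsilon_{1},\pm2\delta_{1},\pm(\epsilon_{1}\pm\delta_{1})\}$ has $8$ elements, not $12$, but this does not affect the identification with $C(1,1)$ or anything else in the argument.
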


\subsection{Notation}\label{quodef1}
As the set of real roots of $gl(2|2)^{(1)}$ is $\tilde{A}(1,1)^{(1)}$ we sometimes call rational quotients of $\tilde{A}(1,1)^{(1)}$ rational quotients of $gl(2|2)^{(1)}$. Similarly, as the set of real roots of $gl(n|n)^{(1)},~n\geq3$ is $\tilde{A}(n-1,n-1)^{(1)}$ we sometimes call infinite quotients of $\tilde{A}(n-1,n-1)^{(1)}$ infinite quotients of $gl(n|n)^{(1)}$.

\subsection{}\label{lem3}
\begin{prop}{lem3}
Let $R$ be an irreducible infinite AGRS, $cl(R)\ncong C(1,1)$. For every $\alpha\in cl(R)$ we fix $\delta_{\alpha}$ satisfying axiom (4). Then for every $\alpha\in cl(R)$ there exists a unique $\tilde{k}_{\delta_{\alpha}}(\alpha)\in\mathbb{Z}_{>0}$ such that for every $\alpha'\in R$ satisfying $cl(\alpha')=\alpha$: $$cl^{-1}(\alpha)=\{\alpha'+\mathbb{Z}\tilde{k}_{\delta_{\alpha}}(\alpha)\delta_{\alpha}\}.$$
\end{prop}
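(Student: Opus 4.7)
\emph{Proof proposal.} The plan is in two steps: (a) show every fiber $cl^{-1}(\alpha)$ is infinite, and (b) establish the arithmetic-progression structure. Part (b) for non-isotropic $\alpha$ is Lemma \ref{nonisostr}, so the real content is (a) together with (b) for isotropic $\alpha$.

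For (a): since $R$ is infinite but $|cl(R)|<\infty$, some fiber $cl^{-1}(\alpha_0)$ contains distinct elements $\alpha_0',\alpha_0''$. For any $\beta\in cl(R)$ to which Proposition \ref{lem1} applies paired with $\alpha_0$, and any lift $\beta'\in cl^{-1}(\beta)$, the orbit
\[
  (r_{\alpha_0''}r_{\alpha_0'})^m(\beta') \;=\; \beta' + t_{\alpha_0,\beta}\,m\,(\alpha_0''-\alpha_0'), \qquad m\in\mathbb{Z},
\]
consists of infinitely many distinct elements of $cl^{-1}(\beta)$, forcing $|cl^{-1}(\beta)|=\infty$. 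Since $cl(R)$ is an irreducible weak GRS (Proposition \ref{prop1}), the Remark after Proposition \ref{lem1} confines the pairs on which Proposition \ref{lem1} fails to the exceptional configurations inside $C(m,n)$ and $BC(m,n)$; combined with the classification of Corollary \ref{weakclass} and the exclusion $cl(R)\ncong C(1,1)$, one verifies case by case that every $\beta\in cl(R)$ is reachable from $\alpha_0$ by a finite chain of non-orthogonal pairs along which Proposition \ref{lem1} applies at every step, so infiniteness propagates to every fiber.

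For (b) in the isotropic case, fix $\alpha\in cl(R)$ isotropic, fix $\alpha'\in cl^{-1}(\alpha)$, and let $k$ be the minimal positive integer with $\alpha'+k\delta_\alpha\in R$ (it exists by (a) and axiom (4)). By inspection of Corollary \ref{weakclass}, the exclusion $cl(R)\ncong C(1,1)$ lets us pick $\beta\in cl(R)$ with $(\alpha,\beta)\neq 0$, such that Proposition \ref{lem1} applies from both sides and $|t_{\beta,\alpha}|=1$ (it suffices to take $\beta$ isotropic). One application of Proposition \ref{lem1} to the pair $(\alpha,\beta)$ with the elements $\alpha',\alpha'+k\delta_\alpha,\beta'$ yields $\beta'+\mathbb{Z}k\delta_\alpha\subseteq cl^{-1}(\beta)$; a second application with the roles swapped, using $\beta'$ and $\beta'+k\delta_\alpha$ and the lift $\alpha'$, transports the periodicity back: $\alpha'+\mathbb{Z}k\delta_\alpha\subseteq cl^{-1}(\alpha)$. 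The reverse inclusion is immediate from axiom (4) and minimality of $k$, so we set $\tilde{k}_{\delta_\alpha}(\alpha):=k$, and uniqueness follows from minimality.

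The main obstacle is the combinatorial verification that (i) the graph on $cl(R)$ whose edges are Proposition-\ref{lem1}-admissible non-orthogonal pairs is connected, and (ii) every isotropic $\alpha$ admits a companion $\beta$ satisfying Proposition \ref{lem1} from both sides with $|t_{\beta,\alpha}|=1$. The Remark following Proposition \ref{lem1} shows the obstruction pairs live only in $C(m,n),BC(m,n)$ between $\pm\epsilon_i\pm\delta_j$ and its immediate partners $\pm 2\epsilon_i,\pm 2\delta_j,\pm(\epsilon_i\mp\delta_j)$, and these can always be circumvented when $m\geq 2$ or $n\geq 2$; the only residual case is $cl(R)\cong C(1,1)$, which is precisely what the hypothesis excludes.
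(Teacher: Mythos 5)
Your overall strategy is the same as the paper's (locate one infinite fiber, propagate infiniteness along admissible pairs, and in the isotropic case transport periodicity through a companion $\beta$ via a double application of Proposition \ref{lem1}); part (b) is essentially the paper's argument. However, part (a) contains a genuine gap: the claim that the obstruction pairs in $C(m,n)$ ``can always be circumvented when $m\geq 2$ or $n\geq 2$'' is false for the roots $\pm 2\delta_j$ when $n=1$ (and symmetrically for $\pm 2\epsilon_i$ when $m=1$). In $C(m,1)$ with $m\geq 2$, the only roots not orthogonal to $2\delta_{1}$ are $\pm 2\delta_{1}$ itself and the isotropic roots $\pm\epsilon_{i}\pm\delta_{1}$, and \emph{every} pair $(\pm\epsilon_{i}\pm\delta_{1},\,2\delta_{1})$ is an obstruction pair, since $2\delta_{1}\mp 2(\epsilon_{i}\pm\delta_{1})=\mp 2\epsilon_{i}\in cl(R)$. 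So $2\delta_{1}$ is isolated in your admissibility graph (the self-pair $(2\delta_{1},2\delta_{1})$ presupposes the very fact you are trying to prove), and no chain of admissible pairs terminates there. Note that $BC(m,1)$ is safe because the non-isotropic root $\delta_{1}$ provides an admissible partner for $2\delta_{1}$; the problem is specific to $C(m,n)$ with $\min(m,n)=1$, $(m,n)\neq(1,1)$.

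A naive fix --- reflecting the infinite fiber $cl^{-1}(\epsilon_{i}+\delta_{1})$ by a lift of $\epsilon_{i}-\delta_{1}$ --- only shows that \emph{at least one} of $cl^{-1}(2\epsilon_{i})$, $cl^{-1}(2\delta_{1})$ is infinite, since the image lands in their union; it does not decide which. This is exactly why the paper does not argue by connectivity here: it extracts the sub-AGRS $R'=\pm\{cl^{-1}(\epsilon_{i'}+\delta_{j'})\sqcup cl^{-1}(\epsilon_{i'}-\delta_{j'})\sqcup cl^{-1}(2\epsilon_{i'})\sqcup cl^{-1}(2\delta_{j'})\}$ with $cl(R')\cong C(1,1)$ and invokes Lemma \ref{cmn11}, whose $X(\cdot)$-analysis (the divisibility argument giving $k=k'$) shows that \emph{both} $cl^{-1}(2\epsilon_{i'})$ and $cl^{-1}(2\delta_{j'})$ are infinite arithmetic progressions. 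You need this (or an equivalent argument) to close the $C(m,1)$ and $C(1,n)$ cases; as written, your case-by-case verification cannot succeed there.
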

\begin{proof}
As $R$ is infinite and $cl(R)$ is finite there exists $\alpha\in cl(R)$ such that $cl^{-1}(\alpha)$ is infinite. If $\alpha$ is non-isotropic, then by Lemma \ref{nonisostr} it has the required form.

\par If $\alpha$ is isotropic then it is easy to verify (recall that $cl(R)\not\cong C(1,1)$) that there exists $\beta\in cl(R)$ such that both $t_{\alpha,\beta}$ and $t_{\beta,\alpha}$ are well defined, and $$\text { either } t_{\alpha,\beta}=t_{\beta,\alpha}=1 \text { or } 2t_{\alpha,\beta}=t_{\beta,\alpha}=2.$$

Fix some $\delta_{\alpha}$ satisfying axiom (4) and take some $\beta'\in cl^{-1}(\beta)$.
We take some $\alpha'\in cl^{-1}(\alpha)$. By axiom (4) we have $cl^{-1}(\alpha)\subseteq\{\alpha'+s\delta_{\alpha}\}_{s\in\mathbb{Z}}$. Choose some $\alpha''\neq\alpha'''\in cl^{-1}(\alpha)$ such that $\alpha'''-\alpha''=k\delta_{\alpha}$, where $k$ is the minimal possible positive integer.
\par We have $r_{\alpha''}r_{\alpha'}(\beta)=\beta'+(\alpha''-\alpha')\in R$.

If $t_{\beta,\alpha}=1$ then by Proposition \ref{lem1} for all $m\in\mathbb{Z}_{\geq0}$:

\par $(r_{\beta'+(\alpha''-\alpha')}r_{\beta'})^{m}\alpha''=\alpha''+mk(\alpha''-\alpha')$;
\par $(r_{\beta'}r_{\beta'+(\alpha''-\alpha')})^{m}\alpha''=\alpha''-mk(\alpha''-\alpha')$,
\\* and so $cl^{-1}(\alpha)\supseteq\{\alpha''+ks\delta_{\alpha}\}_{s\in\mathbb{Z}}$.

If $t_{\beta,\alpha}=2$ then by Proposition \ref{lem1} for all $m\in\mathbb{Z}_{\geq0}$:

\par $(r_{\beta'+(\alpha'''-\alpha'')}r_{\beta'})^m (\alpha'')=\alpha''+2m(\alpha'''-\alpha'')=\alpha''+2mk\delta_{\alpha}\in R$;
\par $(r_{\beta'+(\alpha'''-\alpha'')}r_{\beta'})^m (\alpha''')=\alpha'''+2m(\alpha'''-\alpha'')=\alpha''+(2m+1)k\delta_{\alpha}\in R$;
\par $(r_{\beta'}r_{\beta'+(\alpha'''-\alpha'')})^m (\alpha'')=\alpha''+2m(\alpha''-\alpha''')=\alpha''-2mk\delta_{\alpha}\in R$;
\par $(r_{\beta'}r_{\beta'+(\alpha'''-\alpha'')})^m (\alpha''')=\alpha'''+2m(\alpha''-\alpha''')=\alpha''-(2m-1)k\delta_{\alpha}\in R$,
\\* and again $cl^{-1}(\alpha)\supseteq\{\alpha''+ks\delta_{\alpha}\}_{s\in\mathbb{Z}}$.

\par Assume
$cl^{-1}(\alpha)\not\subseteq\{\alpha''+ks\delta_{\alpha}\}_{s\in\mathbb{Z}}$, then there exist $s',q\in\mathbb{Z}$ and $0<q<k$ such that $\alpha''+(ks'+q)\delta_{\alpha}\in R$. But $\alpha''+ks'\delta_{\alpha}\in R$ and so $(\alpha''+(ks'+q)\delta_{\alpha})-(\alpha''+ks'\delta_{\alpha})=q\delta_{\alpha}$ (recall $0<q<k$) contradicts the minimality of $k$, and so $cl^{-1}(\alpha)\subseteq\{\alpha''+ks\delta_{\alpha}\}_{s\in\mathbb{Z}}$.

\par We conclude that $cl^{-1}(\alpha)=\{\alpha''+ks\delta_{\alpha}\}_{s\in\mathbb{Z}}$. As $cl(\alpha')=cl(\alpha'')$ we also have $cl^{-1}(\alpha)=\{\alpha''+ks\delta_{\alpha}\}_{s\in\mathbb{Z}}=\{\alpha'+ks\delta_{\alpha}\}_{s\in\mathbb{Z}}$.
\par We denote this $k$ by $\tilde{k}_{\delta_{\alpha}}(\alpha)$.

\par We saw that if $cl^{-1}(\beta)$ is infinite it has the required form, so it is left to show that for every $\beta\in cl(R)$: $cl^{-1}(\beta)$ is infinite. We already found one such $\beta$ (as $R$ is infinite and $cl(R)$ is finite). Denote it by $\alpha$.

\par Choose some $\gamma\in cl(R)$ such that $(\alpha,\gamma)\neq0$ (as $cl(R)$ is irreducible such $\gamma$ exists). If $t_{\alpha,\gamma}$ is well defined, then by Lemma \ref{lem1} $cl^{-1}(\gamma)$ is infinite.

\par Otherwise, one easily sees that there exist $i',j'\in \mathbb{N}$ such that $\alpha\in\{\pm\epsilon_{i'}\pm\delta_{j'}\},\gamma\in\{\pm\epsilon_{i'}\pm\delta_{j'},\pm2\epsilon_{i'},\pm2\delta_{j'}\}$, and either $cl(R)=C(m,n)$ or $cl(R)=BC(m,n)$. Clearly $R'=\pm\{cl^{-1}(\epsilon_{i'}+\delta_{j'})\sqcup cl^{-1}(\epsilon_{i'}-\delta_{j'})\sqcup cl^{-1}(2\epsilon_{i'})\sqcup cl^{-1}(2\delta_{j'})\}$ is an AGRS, $cl(R')=C(1,1)$, and so by Lemma \ref{cmn11} $cl^{-1}(\gamma)$ is infinite.

\par Finally, as $R$ is irreducible all pre-images $\{cl^{-1}(\alpha)\}_{\alpha\in cl(R)}$ are infinite, hence have the required form.
\end{proof}

\subsection{}\label{uniformdelta}
\begin{lem}{uniformdelta}
Let $R$ be an irreducible AGRS, then there exists $\delta\in ker(-,-)$ such that for all $\alpha',\alpha''\in R$ satisfying $cl(\alpha')=cl(\alpha'')$ one has $\alpha''-\alpha'\in\mathbb{Z}\delta$.
\end{lem}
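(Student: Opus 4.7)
The plan is to proceed by case analysis on whether $R$ is finite and on the isomorphism type of $cl(R)$. If $R$ is finite, \Lem{lem2} gives $R\cong\tilde{A}(n,n)$ for some $n\geq 1$; for $n\geq 2$ the map $cl$ is a bijection so the conclusion is vacuous (any $\delta\in ker(-,-)$ works), while for $n=1$ one has $cl(R)\cong C(1,1)$ and an inspection of $\tilde{A}(1,1)$ shows that all non-trivial fiber differences are $\pm Id$, where $Id=\sum_{i=1}^{2}(\epsilon_i-\delta_i)$ spans $ker(-,-)|_{V'}$. If $R$ is infinite with $cl(R)\cong C(1,1)$, then \Cor{c11ends} realizes $R$ as a rational quotient of $\tilde{A}(1,1)^{(1)}$, and the explicit description produced inside the proof of \Lem{cmn11} already exhibits a single $\delta$ in which all fiber differences lie.

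The heart of the argument is the remaining case: $R$ infinite with $cl(R)\ncong C(1,1)$. Here I will work with the primitive step of each fiber. For each $\alpha\in cl(R)$ let $\eta_\alpha\in ker(-,-)$ denote the primitive generator of the cyclic subgroup $\{\alpha''-\alpha'\mid\alpha',\alpha''\in cl^{-1}(\alpha)\}$; by \Prop{lem3} this subgroup is cyclic and non-zero, and $cl^{-1}(\alpha)=\alpha'+\mathbb{Z}\eta_\alpha$ for every $\alpha'\in cl^{-1}(\alpha)$. Since $ker(-,-)$ is one-dimensional, the ratios $\eta_\alpha/\eta_\beta$ are well-defined complex scalars, and the main claim to prove is that each such ratio is rational.

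To establish rationality I will combine \Prop{lem1} with irreducibility of $cl(R)$. Whenever $\alpha,\beta\in cl(R)$ satisfy $(\alpha,\beta)\neq 0$ and $t_{\alpha,\beta}$ is defined in the sense of \Prop{lem1}, substituting $\alpha''-\alpha'=\eta_\alpha$ and varying $m\in\mathbb{Z}$ shows $\beta'+t_{\alpha,\beta}m\eta_\alpha\in cl^{-1}(\beta)$ for all $m$, which forces $t_{\alpha,\beta}\eta_\alpha\in\mathbb{Z}\eta_\beta$ and hence $\eta_\alpha/\eta_\beta\in\mathbb{Q}$. By irreducibility of $cl(R)$ any two roots are connected by a chain of pairwise non-orthogonal roots, and the exception pairs in types $C(m,n)$ and $BC(m,n)$ (in which neither $t_{\alpha,\beta}$ nor $t_{\beta,\alpha}$ is directly accessible via \Prop{lem1}) can be circumvented by inserting a non-isotropic intermediary, exactly as in the last paragraph of the proof of \Prop{lem3}; the hypothesis $cl(R)\ncong C(1,1)$ guarantees that enough such intermediaries exist.

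Having shown that the finite collection $\{\eta_\alpha\}_{\alpha\in cl(R)}$ (finite by axiom (3)) consists of rationally commensurable elements of the one-dimensional space $ker(-,-)$, I fix any $\alpha_0\in cl(R)$, write each ratio $\eta_\alpha/\eta_{\alpha_0}=p_\alpha/q_\alpha$ in lowest terms, and set $\delta:=\eta_{\alpha_0}/L$, where $L$ is the least common multiple of the denominators $q_\alpha$. Then $\eta_\alpha\in\mathbb{Z}\delta$ for every $\alpha\in cl(R)$, so $cl^{-1}(\alpha)\subseteq\alpha'+\mathbb{Z}\delta$ for every $\alpha$, which is the desired conclusion. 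I expect the main technical obstacle to be the routing argument of the third paragraph, which must bypass the exception pairs in types $C(m,n)$ and $BC(m,n)$; once that is handled, the commensurability and common-denominator steps are essentially immediate.
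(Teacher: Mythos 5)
Your proposal is correct and follows essentially the same route as the paper: the same case split (finite via \Lem{lem2}, the $C(1,1)$ case via \Cor{c11ends}, and the main infinite case), and in the main case the same mechanism of transporting commensurability of the fiber steps across non-orthogonal pairs of classes by applying \Prop{lem1} to two roots in one fiber, with connectivity supplied by irreducibility and the exceptional isotropic pairs in $C(m,n)$/$BC(m,n)$ routed through a non-isotropic root. The only difference is organizational: the paper grows a maximal set admitting a uniform $\delta$ and derives a contradiction, whereas you establish all pairwise rational ratios first and then pass to a common denominator; both rest on exactly the same ingredients.
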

\begin{proof}
If $cl(R)\cong C(1,1)$ then either $R\cong \tilde{A}(1,1)$ or $R$ is peculiar, and in both cases the claim holds. If $R\cong \tilde{A}(n,n)$ for some $n>1$ then $cl$ is injective and the claim also holds.
\par Assume $cl(R)\ncong C(1,1)$ and $R$ is infinite. By definition for all $\alpha\in cl(R)$ there exists $\delta_{\alpha}\in ker(-,-)$ such that for all $\alpha',\alpha''\in R$ satisfying $cl(\alpha')=cl(\alpha'')$: $\alpha''-\alpha'\in\mathbb{Z}\delta_{\alpha}$. We want to find a uniform $\delta$ for all $\alpha\in cl(R)$. Let $\{\alpha_{i}\}_{i=1}^{n}$ be a maximal set (by cardinality of $n$) in $cl(R)$ such that there exists a uniform $\tilde{\delta}\in ker(-,-)$ such that for all $\alpha',\alpha''\in R$ satisfying $cl(\alpha')=cl(\alpha'')\in\{\alpha_{i}\}_{i=1}^{n}$: $\alpha''-\alpha'\in\mathbb{Z}\tilde{\delta}$ (clearly $n\geq1$).
\par Assume $cl(R)\setminus\{\alpha_{i}\}_{i=1}^{n}\neq\emptyset$. As $cl(R)\ncong C(1,1)$ is irreducible there exists $\beta\in cl(R)\setminus\{\alpha_{i}\}_{i=1}^{n}$ and $\alpha_{i}\in\{\alpha_{i}\}_{i=1}^{n}$ such that $(\alpha_{i},\beta)\neq0$, and at least one of $t_{\alpha_{i},\beta},t_{\beta,\alpha_{i}}$ is well defined (then it is a non-zero integer).
\par Assume $t_{\beta,\alpha_{i}}$ is well defined (the other case is done exactly the same). Take some $\beta'\neq\beta''\in R$ such that $cl(\beta')=cl(\beta'')=\beta$ (by Proposition \ref{lem3} such roots exist). Then, by Lemma \ref{lem1}: $r_{\beta''}r_{\beta'}\alpha'_{i}=\alpha'_{i}+t_{\beta,\alpha_{i}}(\beta''-\beta')=\alpha'_{i}+t_{\beta,\alpha_{i}}s\delta_{\beta}$ for some $s\in\mathbb{Z}\setminus\{0\}$. On the other hand we have $t_{\beta,\alpha_{i}}s\delta_{\beta}=s'\tilde{\delta}$ for some $s'\in\mathbb{Z}\setminus\{0\}$. Altogether we got $\tilde{\delta}=\frac{t_{\beta,\alpha_{i}}s}{s'}\delta_{\beta},\delta_{\beta}=\frac{s'}{t_{\beta,\alpha_{i}}s}\tilde{\delta}$. Defining $\tilde{\tilde{\delta}}:=\frac{\delta_{\beta}}{s'}$, we get $\delta_{\beta},\tilde{\delta}\in\mathbb{Z}\tilde{\tilde{\delta}}$, and so $\tilde{\tilde{\delta}}$ is uniform for $\{\alpha_{i}\}_{i=1}^{n}\sqcup\{\beta\}$, a contradiction.
\end{proof}

\subsection{Notation}
From now on by $\delta$ we mean any $\delta\in ker(-,-)$ satisfying Lemma \ref{uniformdelta}. We will always work with such $\delta$s instead of $\delta_{\alpha}$s and omit the lower index in $\tilde{k}_{\delta}$ (and just write $\tilde{k}$, though $\tilde{k}$ depends on $\delta$). Recall that by Lemma \ref{lem3} for every $\alpha\in cl(R)\ncong C(1,1)$: $R$ is infinite implies $\tilde{k}(\alpha)\in\mathbb{Z}_{>0}$.
\par For a set $S\subseteq V$ we denote $S^{\perp}=\{v\in V|(v,S)=0\}$. In particular, as $R$ spans $V$ we have $R^{\perp}=ker(-,-)=span\{\delta\}$.

\subsection{}\label{weylinvlemma}
\begin{lem}{weylinvlemma}
Let $R$ be an irreducible infinite AGRS, $cl(R)\ncong C(1,1)$, $\alpha,\beta\in cl(R)$. Recall that $cl(R)$ is a weak GRS, and assume $r_{\alpha}$ is defined in $cl(R)$. Then $\tilde{k}(\beta)=\tilde{k}(r_{\alpha}\beta)$.
\end{lem}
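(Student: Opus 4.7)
The plan is to show that $r_{\alpha'}$ acts on $cl^{-1}(\beta)$ as a translation by $r_{\alpha'}(\beta')-\beta'$ for a suitably chosen lift $\alpha'\in cl^{-1}(\alpha)$, so that the $\delta$-spacing is preserved, and then to use the involutivity of $r_\alpha$ to obtain divisibility in both directions.

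Concretely, first fix any $\beta'\in cl^{-1}(\beta)$ and $\alpha'\in cl^{-1}(\alpha)$. Note $(\alpha',\delta)=0$, so $(\alpha',\tilde\beta)=(\alpha',\beta')$ for every $\tilde\beta\in cl^{-1}(\beta)$, and in particular $(\alpha',\tilde\beta)\neq0$ iff $(\alpha,\beta)\neq 0$. If $(\alpha,\beta)=0$ then $r_\alpha\beta=\beta$ and there is nothing to prove, so assume $(\alpha,\beta)\neq 0$. Every element of $cl^{-1}(\beta)$ has the shape $\tilde\beta=\beta'+m\tilde k(\beta)\delta$ for some $m\in\mathbb Z$ by Proposition~\ref{lem3}.

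Next I show $r_{\alpha'}(\tilde\beta)=r_{\alpha'}(\beta')+m\tilde k(\beta)\delta$ for every such $\tilde\beta$. When $(\alpha,\alpha)\neq 0$ this is automatic: $r_{\alpha'}$ is a linear endomorphism of $V$ fixing $\ker(-,-)$ pointwise (since $(\alpha',\delta)=0$), so it acts on $\beta'+m\tilde k(\beta)\delta$ as translation by $r_{\alpha'}(\beta')-\beta'$ on top of $r_{\alpha'}(\beta')$. When $(\alpha,\alpha)=0$, axiom~(2) only gives $r_{\alpha'}(\tilde\beta)\in\{\tilde\beta+\alpha',\tilde\beta-\alpha'\}$, with exactly one of the two choices lying in $R$ by the uniqueness noted after the definition of an AGRS. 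The key point is that the sign is the \emph{same} for all $\tilde\beta\in cl^{-1}(\beta)$: indeed, if $r_{\alpha'}(\beta')=\beta'+\alpha'$ but $r_{\alpha'}(\tilde\beta)=\tilde\beta-\alpha'$ for some $\tilde\beta$, then both $\beta+\alpha$ and $\beta-\alpha$ would lie in $cl(R)$, contradicting the hypothesis that $r_\alpha$ is well-defined on $cl(R)$. Thus the sign is uniform, and in both cases $r_{\alpha'}$ sends $\beta'+m\tilde k(\beta)\delta$ to $r_{\alpha'}(\beta')+m\tilde k(\beta)\delta$. In particular, applying $cl$ shows $cl(r_{\alpha'}(\beta'))=r_\alpha(\beta)$, so $r_{\alpha'}(\beta')\in cl^{-1}(r_\alpha\beta)$, and we have proved the inclusion
$$r_{\alpha'}(\beta')+\mathbb Z\tilde k(\beta)\delta\;\subseteq\; cl^{-1}(r_\alpha\beta)\;=\;r_{\alpha'}(\beta')+\mathbb Z\tilde k(r_\alpha\beta)\delta,$$
where the last equality is again Proposition~\ref{lem3}. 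Hence $\tilde k(r_\alpha\beta)\mid\tilde k(\beta)$.

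Finally, since $r_\alpha$ is an involution on $cl(R)$ (and, under the standing hypothesis that $r_\alpha$ is defined, uniqueness propagates to the element $r_\alpha\beta$: the two candidates $r_\alpha(r_\alpha\beta)\in\{\beta,\beta+2\alpha\}$ in the isotropic case force $\beta$ by the same well-definedness argument, else both $\beta$ and $\beta+2\alpha$ would be reached from the isotropic $\alpha$ inside $cl(R)$ by an odd reflection, contradicting uniqueness of $r_\alpha$ at $r_\alpha\beta$). Thus the same argument applied with $\beta$ replaced by $r_\alpha\beta$ yields $\tilde k(\beta)\mid\tilde k(r_\alpha\beta)$, and combining the two divisibilities gives $\tilde k(\beta)=\tilde k(r_\alpha\beta)$.

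The main technical obstacle is precisely the sign-uniformity step in the isotropic case: one must exclude the possibility that $r_{\alpha'}$ adds $+\alpha'$ on some lifts of $\beta$ and $-\alpha'$ on others. This is handled cleanly by the assumption that $r_\alpha$ is defined on $cl(R)$, which forbids the simultaneous presence of $\beta+\alpha$ and $\beta-\alpha$; without this hypothesis the lemma would genuinely fail in the weak-GRS configurations $C(m,n)$ and $BC(m,n)$ identified in the remark following Proposition~\ref{lem1}.
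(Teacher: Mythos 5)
Your proposal is correct and follows essentially the same route as the paper: in both, the key step is that $r_{\alpha'}$ shifts all lifts of $\beta$ by the same vector (linearity in the non-isotropic case, sign-uniformity forced by $r_\alpha$ being well defined on $cl(R)$ in the isotropic case), giving $\tilde k(r_\alpha\beta)\mid\tilde k(\beta)$, and then involutivity of $r_\alpha$ yields the reverse inequality. Your phrasing via divisibility rather than the paper's ``$\leq$'' is a cosmetic difference only.
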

\begin{proof}
Take some $\alpha',\beta'\in R$ such that $cl(\alpha')=\alpha, cl(\beta')=\beta$.
If $(\alpha,\beta)=0$ the claim is trivial, so we may assume $(\alpha,\beta)\neq0$.
\par Assume $(\alpha,\alpha)=0$, then $r_{\alpha'}(\beta')\in\{\beta'\pm\alpha'\},r_{\alpha'}(\beta'+\tilde{k}(\beta)\delta)\in\{\beta'+\tilde{k}(\beta)\delta\pm\alpha'\}$. As $r_{\alpha}$ is well defined in $cl(R)$: \\* $r_{\alpha'}(\beta'+\tilde{k}(\beta)\delta)=r_{\alpha'}(\beta')+\tilde{k}(\beta)\delta$, $r_{\alpha'}(\beta'+\tilde{k}(\beta)\delta)-r_{\alpha'}(\beta')=\tilde{k}(\beta)\delta$ (i.e. the signs are the same in both reflections). Hence we have $\tilde{k}(r_{\alpha}\beta)\leq \tilde{k}(\beta)$.
\par Assume $(\alpha,\alpha)\neq0$, then $r_{\alpha'}$ is linear. $r_{\alpha'}(\beta')=\beta'-\frac{2(\alpha',\beta')}{(\alpha',\alpha')}\alpha'$ and $r_{\alpha'}(\beta'+\tilde{k}(\beta')\delta)=\beta'+\tilde{k}(\beta')\delta-\frac{2(\alpha',\beta')}{(\alpha',\alpha')}\alpha'$ so $r_{\alpha'}(\beta'+\tilde{k}(\beta)\delta)-r_{\alpha'}(\beta')=\tilde{k}(\beta)\delta$. Hence we have $\tilde{k}(r_{\alpha}\beta)\leq \tilde{k}(\beta)$.
\par In both cases we have $\tilde{k}(r_{\alpha}\beta)\leq \tilde{k}(\beta)$. As $r_{\alpha}$ is an involution we also have $\tilde{k}(r_{\alpha}\beta)\geq \tilde{k}(\beta)$ and so $\tilde{k}(r_{\alpha}\beta)=\tilde{k}(\beta)$.
\end{proof}

\subsection{}\label{weylinvariant}
\begin{cor}{weylinvariant}
Let $R$ be a non-peculiar irreducible infinite AGRS, and let $\widetilde{W}$ be the generalized Weyl group of the weak GRS $cl(R)$. Then the function $\tilde{k}:cl(R)\to\mathbb{Z}_{>0}$ is $\widetilde{W}$ invariant.
\end{cor}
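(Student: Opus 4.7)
The plan is to reduce the corollary to \Lem{weylinvlemma} via the definition of the generalized Weyl group as being generated by individual reflections. First I would check that the hypotheses of \Lem{weylinvlemma} apply, namely that $cl(R)\ncong C(1,1)$. This follows from the assumption that $R$ is non-peculiar and infinite: by \Cor{c11ends} any AGRS $R'$ with $cl(R')\cong C(1,1)$ is either isomorphic to $\tilde{A}(1,1)$, which is finite by \Lem{lem2} (or just by inspection), or is a rational quotient of $\tilde{A}(1,1)^{(1)}$, which by definition means it is peculiar. Both options are excluded by hypothesis, so $cl(R)\ncong C(1,1)$ and the function $\tilde{k}:cl(R)\to\mathbb{Z}_{>0}$ is well defined by \Prop{lem3}.

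Next I would unpack the definition of $\widetilde{W}$: it is the subgroup of $\operatorname{Aut}(cl(R))$ generated by those reflections $r_\alpha$, $\alpha\in cl(R)$, that are well defined on $cl(R)$. Every element $w\in\widetilde{W}$ therefore admits a finite factorization $w=r_{\alpha_1}r_{\alpha_2}\cdots r_{\alpha_n}$ into generators. By \Lem{weylinvlemma}, each individual factor preserves $\tilde{k}$, that is $\tilde{k}(r_{\alpha_i}\gamma)=\tilde{k}(\gamma)$ for every $\gamma\in cl(R)$.

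The invariance of $\tilde{k}$ under $w$ then follows by a straightforward induction on the length $n$ of the factorization. The base case $n=0$ is trivial. For the inductive step, writing $w=r_{\alpha_1}w'$ with $w'$ of length $n-1$, for any $\beta\in cl(R)$ we compute
\[
\tilde{k}(w\beta)=\tilde{k}\bigl(r_{\alpha_1}(w'\beta)\bigr)=\tilde{k}(w'\beta)=\tilde{k}(\beta),
\]
where the middle equality uses \Lem{weylinvlemma} applied to the root $w'\beta\in cl(R)$ and the last equality is the inductive hypothesis.

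All genuine work has already been done in \Lem{weylinvlemma}; the corollary itself is a standard ``pass from generators to the whole group'' step. The only point that deserves care, and which I would make explicit in the write-up, is the verification that the case $cl(R)\cong C(1,1)$ is indeed excluded by the word ``non-peculiar'' in the hypothesis, since that is precisely the case in which \Lem{weylinvlemma} was not proved.
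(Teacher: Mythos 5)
Your proposal is correct and matches the paper's (implicit) argument: the corollary is stated as an immediate consequence of Lemma \ref{weylinvlemma}, obtained by passing from the generating reflections to all of $\widetilde{W}$ exactly as you describe. Your explicit check that ``non-peculiar and infinite'' rules out $cl(R)\cong C(1,1)$, via Corollary \ref{c11ends}, is the right justification for invoking both Proposition \ref{lem3} and Lemma \ref{weylinvlemma}, and is a detail the paper leaves to the reader.
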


\begin{rem}{}
Assume $(\alpha,\alpha)=0$, $(\alpha,\beta)\neq0$ and $cl(R)\cap\{\beta\pm2\alpha\}=\emptyset$. Then one of $\beta\pm\alpha\in cl(R)$. In that case $\tilde{k}(\beta)=\tilde{k}(\beta\pm\alpha)$ (the proof is exactly like the one of Lemma \ref{weylinvlemma}). This is true even if $cl(R)\cap\{\beta'\pm2\alpha\}\neq\emptyset$ for some other $\beta'\in cl(R)$ (and then $r_{\alpha}$ is not defined in $cl(R)$). Thus, if $cl(R)\cong BC(m,n)$ or $cl(R)\cong C(m,n)$ one has has that $\tilde{k}$ is constant on the {\em set} $\{\pm\epsilon_{i}\pm\epsilon_{j},\pm\epsilon_{i}\pm\delta_{j},\pm\delta_{i}\pm\delta_{j}\}$ and on the set $\{\epsilon_{i},\delta_{j}\}$ (although in $BC(m,n)$ and $C(m,n)$ these sets are not $\widetilde{W}$ orbits as $r_{\epsilon_{i}\pm\delta_{j}}$ is not defined).
\end{rem}

\subsection{}\label{combinedlemma}
\begin{lem}{combinedlemma}
Let $R$ be an irreducible infinite AGRS, $cl(R)\ncong C(1,1)$.
\par (\romannumeral1) Let $\alpha,\beta\in cl(R)$ be such that either $(\alpha,\alpha)\neq0$ or $cl(R)\cap\{\beta+2\alpha,\beta-2\alpha\}=\emptyset,|cl(R)\cap\{\beta+\alpha,\beta-\alpha\}|=1$. Then $t_{\alpha,\beta}\tilde{k}(\alpha)$ is divisible by $\tilde{k}(\beta)$.
\par (\romannumeral2)
Assume either $cl(R)\cong C(m,n)$ or $cl(R)\cong BC(m,n)$. Then $\tilde{k}(2\epsilon_{i})=\tilde{k}(2\delta_{j})=2\tilde{k}(\epsilon_{i}+\delta_{j})$.
\end{lem}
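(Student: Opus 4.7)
The plan is to treat (i) as a direct consequence of Propositions \ref{lem1} and \ref{lem3}, and then to handle (ii) by bootstrapping from (i) using a $C(1,1)$-type sub-AGRS together with the peculiar classification of Lemma \ref{cmn11}.

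For (i), by Proposition \ref{lem3} I choose $\alpha',\alpha''\in cl^{-1}(\alpha)$ with $\alpha''-\alpha'=\tilde{k}(\alpha)\delta$ and an arbitrary $\beta'\in cl^{-1}(\beta)$. Proposition \ref{lem1} applied with $m=1$ yields
$$(r_{\alpha''}r_{\alpha'})(\beta')=\beta'+t_{\alpha,\beta}\tilde{k}(\alpha)\delta\in cl^{-1}(\beta).$$
Since $cl^{-1}(\beta)=\beta'+\mathbb{Z}\tilde{k}(\beta)\delta$, the vector $t_{\alpha,\beta}\tilde{k}(\alpha)\delta$ must be an integer multiple of $\tilde{k}(\beta)\delta$, which yields $\tilde{k}(\beta)\mid t_{\alpha,\beta}\tilde{k}(\alpha)$.

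For (ii), one direction is immediate from (i): applying (i) with $\alpha=2\epsilon_i$ and $\beta=\epsilon_i+\delta_j$ (so $t_{\alpha,\beta}=1$) gives $\tilde{k}(\epsilon_i+\delta_j)\mid\tilde{k}(2\epsilon_i)$, and symmetrically $\tilde{k}(\epsilon_i+\delta_j)\mid\tilde{k}(2\delta_j)$. To obtain the precise ratio I pass to the sub-AGRS
$$R':=cl^{-1}(\{\pm\epsilon_i\pm\delta_j,\ \pm 2\epsilon_i,\ \pm 2\delta_j\})\subseteq R.$$
A brief case check shows that $R'$ is closed under all its own reflections: the only non-automatic closures are odd reflections such as $r_{\epsilon_i+\delta_j}(2\epsilon_i)$, which the axioms force into the singleton $\{3\epsilon_i+\delta_j,\,\epsilon_i-\delta_j\}\cap cl(R)=\{\epsilon_i-\delta_j\}$. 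Hence $R'$ is itself an AGRS with $cl(R')\cong C(1,1)$. Because $R$ is infinite, Proposition \ref{lem3} makes every preimage $cl^{-1}(\gamma)$ infinite, so $R'$ is infinite, and Lemma \ref{cmn11} forces $R'$ to be peculiar.

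The main obstacle is to reconcile the peculiar two-gap structure of $R'$ with the single-gap uniformity that $\tilde{k}$ imposes on the ambient $R$. From the proof of Lemma \ref{cmn11}, $R'$ is governed by positive integers $k,l$ with $0<l<k$, and writing $X(\gamma):=\{s\in\mathbb{C}\mid\gamma+s\delta\in R\}$ one has $X(2\epsilon_i)=k\mathbb{Z}$, $X(2\delta_j)=l+k\mathbb{Z}$, and $X(\epsilon_i+\delta_j)=k\mathbb{Z}\sqcup(l+k\mathbb{Z})$. On the other hand, in the ambient $R$ (where $cl(R)\ncong C(1,1)$), Proposition \ref{lem3} asserts that $X(\epsilon_i+\delta_j)$ is a single arithmetic progression with step $\tilde{k}(\epsilon_i+\delta_j)$. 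The consecutive gaps of $k\mathbb{Z}\cup(l+k\mathbb{Z})$ alternate between $l$ and $k-l$, so uniformity forces $l=k/2$. We conclude $\tilde{k}(\epsilon_i+\delta_j)=k/2$ while $\tilde{k}(2\epsilon_i)=\tilde{k}(2\delta_j)=k$, proving the desired equalities.
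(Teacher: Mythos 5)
Your proof is correct and follows essentially the same route as the paper: part (i) is the same two-reflection computation via Proposition \ref{lem1}, and part (ii) uses the same sub-AGRS $R'$ with $cl(R')\cong C(1,1)$ together with Lemma \ref{cmn11}. Your explicit gap-alternation argument forcing $l=k/2$ merely spells out what the paper compresses into the assertion that non-peculiarity of $R$ forces $cl^{-1}(cl(R'))\cong C(1,1)^{\frac{1}{2}}$.
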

\begin{proof}
Let $\alpha',\beta'\in R$ be such that $cl(\alpha')=\alpha,cl(\beta')=\beta$. By Proposition \ref{lem1} $$r_{\alpha'+\tilde{k}(\alpha)\delta}r_{\alpha'}(\beta')=\beta'+t_{\alpha,\beta}\tilde{k}(\alpha)\delta\in\{\beta'+s\tilde{k}(\beta)\delta\}_{s\in\mathbb{Z}},$$ so $t_{\alpha,\beta}\tilde{k}(\alpha)$ is divisible by $\tilde{k}(\beta)$.
\par We are left to prove (\romannumeral2). By Corollary \ref{weylinvariant} $\tilde{k}(\epsilon_{i}+\delta_{j})$ is independent of $i,j$. Fix some $i',j'$. Clearly $R'=\pm\{cl^{-1}(\epsilon_{i'}+\delta_{j'})\sqcup cl^{-1}(\epsilon_{i'}-\delta_{j'})\sqcup cl^{-1}(2\epsilon_{i'})\sqcup cl^{-1}(2\delta_{j'})\}$ is an AGRS. As $R$ is non-peculiar and $cl(R')\cong C(1,1)$, we must have that $cl^{-1}(cl(R'))\cong C(1,1)^{\frac{1}{2}}$ and so $\tilde{k}(2\epsilon_{i})=\tilde{k}(2\delta_{j})=2\tilde{k}(\epsilon_{i}+\delta_{j})$ by Lemma \ref{cmn11}, as required.
\end{proof}

\subsection{}\label{unik}
\begin{prop}{unik}
Let $R$ be a non-peculiar irreducible infinite AGRS. There exists a unique (up to a $\pm$ sign) $\delta\in ker(-,-)$ and a unique map $k:cl(R)\to\mathbb{Z}_{>0}$ such that for all $\alpha\in cl(R)$ and all $\alpha'\in cl^{-1}(\alpha)$: $cl^{-1}(\alpha)=\{\alpha'+\mathbb{Z}k(\alpha)\delta\}$, and $gcd\{k(\alpha)\}_{\alpha\in cl(R)}=1$.
\end{prop}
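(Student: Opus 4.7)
The plan is to reduce the statement to \Lem{uniformdelta} and \Prop{lem3}, and then to factor out a gcd for existence and use the one-dimensionality of $ker(-,-)$ for uniqueness.

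For \emph{existence}, first note that since $R$ is non-peculiar and infinite, \Lem{cmn11} rules out $cl(R)\cong C(1,1)$ (otherwise $R$ would be either finite or peculiar), so \Prop{lem3} is applicable. Invoke \Lem{uniformdelta} to pick some $\delta_{0}\in ker(-,-)\setminus\{0\}$ that works uniformly for all $\alpha\in cl(R)$, and then apply \Prop{lem3} to obtain $\tilde{k}_{\delta_{0}}(\alpha)\in\mathbb{Z}_{>0}$ with $cl^{-1}(\alpha)=\{\alpha'+\mathbb{Z}\tilde{k}_{\delta_{0}}(\alpha)\delta_{0}\}$ for every $\alpha\in cl(R)$ and every $\alpha'\in cl^{-1}(\alpha)$. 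Axiom (3) ensures $cl(R)$ is finite, so $d:=\gcd\{\tilde{k}_{\delta_{0}}(\alpha)\}_{\alpha\in cl(R)}$ is a well-defined positive integer. Define $\delta:=d\delta_{0}$ and $k(\alpha):=\tilde{k}_{\delta_{0}}(\alpha)/d\in\mathbb{Z}_{>0}$; then $\mathbb{Z}\tilde{k}_{\delta_{0}}(\alpha)\delta_{0}=\mathbb{Z}k(\alpha)\delta$, which gives the desired identity, and $\gcd\{k(\alpha)\}_{\alpha\in cl(R)}=1$ by construction.

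For \emph{uniqueness}, suppose $(\delta',k')$ is another pair satisfying the conclusion. By axiom (0'), $\dim ker(-,-)=1$, so $\delta'=c\delta$ for some $c\in\mathbb{C}^{*}$. Fix $\alpha\in cl(R)$ and $\alpha'\in cl^{-1}(\alpha)$. The two descriptions of $cl^{-1}(\alpha)-\alpha'$ give an equality of subsets $\mathbb{Z}k(\alpha)\delta=\mathbb{Z}k'(\alpha)c\delta$ in the line $\mathbb{C}\delta$. Since $\mathbb{Z}a=\mathbb{Z}b$ for $a,b\in\mathbb{C}^{*}$ forces $a=\pm b$, we get $k'(\alpha)c=\pm k(\alpha)$; as both $k(\alpha),k'(\alpha)$ are positive integers, this forces $c\in\mathbb{R}^{*}$ with $|c|=k(\alpha)/k'(\alpha)$ independent of $\alpha$. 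Taking the gcd over all $\alpha\in cl(R)$ and using $\gcd\{k\}=\gcd\{k'\}=1$ yields $|c|=1$, so $c=\pm 1$, $\delta'=\pm\delta$, and $k'=k$.

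No serious obstacle arises here; the real content was already captured by \Lem{uniformdelta} and \Prop{lem3}, and the remaining work is the bookkeeping of the gcd normalization together with the routine one-dimensional scaling argument for uniqueness.
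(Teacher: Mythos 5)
Your proposal is correct and follows essentially the same route as the paper: pick a uniform $\delta_{0}$ via Lemma \ref{uniformdelta}, apply Proposition \ref{lem3} to get $\tilde{k}_{\delta_{0}}$, rescale by the gcd, and deduce uniqueness from $\dim\ker(-,-)=1$. Your version is in fact slightly more careful than the paper's, since you explicitly note that non-peculiarity plus infiniteness excludes $cl(R)\cong C(1,1)$ (needed to invoke Proposition \ref{lem3}) and you spell out the gcd argument that forces $|c|=1$ in the uniqueness step.
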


\begin{proof}
Choose some $\delta'\in ker(-,-)$ satisfying Lemma \ref{uniformdelta}, and define $\tilde{k}_{\delta'}$ in Lemma \ref{lem3}. Define $\delta:=gcd\{\tilde{k}_{\delta'}(\alpha)\}_{\alpha\in cl(R)}\delta'$. Clearly, $\delta$ satisfies the conditions of Lemma \ref{uniformdelta}, and $k:=\tilde{k}_{\delta}$ in Lemma \ref{lem3} satisfies $gcd\{k(\alpha)\}_{\alpha\in cl(R)}=1$. By Lemma \ref{lem3} this $k$ is unique, and $\delta$ is unique up to a $\pm$ sign.
\end{proof}

\subsection{Notation}
From now on by $\delta$ we mean any $\delta\in ker(-,-)$ satisfying Lemma \ref{unik}. As shown, it is unique up to a $\pm$ sign.

\subsection{}
\begin{rem}{Extending the function $k$}
Given a non-peculiar irreducible infinite AGRS one may define the function $k$ as a function from $R$ to $\mathbb{Z}_{>0}$ by $\alpha\mapsto cl(\alpha)\mapsto k(cl(\alpha))$. In that case, however, the function $k$ is not invariant with respect to the action of the generalized Weyl group of $R$. As an example take $2\epsilon_{i}$ and $\epsilon_{i}+\delta_{j}$ in $A(2m-1,2n-1)^{(2)}$: as $r_{\epsilon_{i}-\delta_{j}}(2\epsilon_{i})=\epsilon_{i}+\delta_{j}$ they are in the same orbit, but $k(2\epsilon_{i})=2k(\epsilon_{i}+\delta_{j})$.
\end{rem}

\section{Step \romannumeral3: All possible $k$s}
Let $R$ be a non-peculiar irreducible infinite AGRS. By Corollary \ref{weakclass}, Proposition \ref{prop1}, and Lemma \ref{lem2} $cl(R)$ has one of the forms (1)-(17). In the following we find all possible functions $k:cl(R)\to\mathbb{Z}_{>0}$ for all possible forms (1)-(17). Note that as $gcd\{k(\alpha)\}_{\alpha\in cl(R)}=1$ the set of ratios $\{\frac{k(\alpha)}{k(\beta)}\}_{\alpha,\beta\in cl(R)}$ completely determines $k$. In addition, for each possible pair $(cl(R),k:cl(R)\to\mathbb{Z}_{>0})$ we find some $\fg$, an indecomposable symmetrizable affine Kac-Moody superalgebra, such that its set of real roots $\Delta^{re}(\fg)$ is an AGRS, $cl(\Delta^{re}(\fg))\cong cl(R)$ and $k(cl(\Delta^{re}(\fg)))\equiv k(cl(R))$. We call this $\fg$ {\em a representative of $R$}. In Section \ref{pfofthm} we will show that in almost all cases $R$ is isomorphic to the set of real roots of its representative.

\begin{rem}{}
In general we follow Kac's notation (see \cite{K2} and \cite{K1}). In the cases where we have a so called {\em twisted} Lie superalgebra we follow S. Reif's notation (see \cite{R1}).
\end{rem}

\subsection{$A_{n}$} \

$cl(R)\cong A_{n}$, there is only one $\widetilde{W}$ orbit, so by Corollary \ref{weylinvariant} $k$ is constant, and $A_{n}^{(1)}$ is a representative.

\subsection{$B_{n}$} \

$cl(R)\cong B_{n}$, by Corollary \ref{weylinvariant} $k$ is constant on the orbit $\{\pm\epsilon_{i}\pm\epsilon_{j}\}$ and on the orbit $\{\pm\epsilon_{i}\}$.
\par As $|t_{\epsilon_{i},\epsilon_{i}-\epsilon_{j}}|=2|t_{\epsilon_{i}-\epsilon_{j},\epsilon_{i}}|=2$ by Lemma \ref{combinedlemma}.\romannumeral1~ we have $k(\epsilon_{i})\in\{k(\epsilon_{i}-\epsilon_{j}),\frac{1}{2}k(\epsilon_{i}-\epsilon_{j})\}$. In the first case $B_{n}^{(1)}$ is a representative, and in the second $D_{n+1}^{(2)}$ is a representative.

\subsection{$C_{n}$}\label{Cn} \

$cl(R)\cong C_{n}$, by Corollary \ref{weylinvariant} $k$ is constant on the orbit $\{\pm\epsilon_{i}\pm\epsilon_{j}\}$, and on the orbit $\{\pm2\epsilon_{i}\}$. \par As $|t_{\epsilon_{i}-\epsilon_{j},2\epsilon_{i}}|=2|t_{2\epsilon_{i},\epsilon_{i}-\epsilon_{j}}|=2$ by Lemma \ref{combinedlemma}.\romannumeral1~ we have $k(\epsilon_{i}-\epsilon_{j})\in\{k(2\epsilon_{i}),\frac{1}{2}k(2\epsilon_{i})\}$. In the first case $C_{n}^{(1)}$ is a representative, and in the second $A_{2n-1}^{(2)}$ is a representative.

\subsection{$D_{n}$} \

$cl(R)\cong D_{n}$, there is only one $\widetilde{W}$ orbit, so by Corollary \ref{weylinvariant} $k$ is constant, and $D_{n}^{(1)}$ is a representative.

\subsection{$E_{6},E_{7},E_{8}$} \

$cl(R)\cong E_{i}$ for some $i\in\{6,7,8\}$, there is only one $\widetilde{W}$ orbit, so by Corollary \ref{weylinvariant} $k$ is constant, and $E_{i}^{(1)}$ is a representative.

\subsection{$F_{4}$} \

$cl(R)\cong F_{4}$, by Corollary \ref{weylinvariant} $k$ is constant on the orbit $\{\pm\epsilon_{i},\frac{1}{2}(\pm\epsilon_{1}\pm\epsilon_{2}\pm\epsilon_{3}\pm\epsilon_{4})\}$, and on the orbit $\{\pm\epsilon_{i}\pm\epsilon_{j}\}$. \par As $t_{\epsilon_{1},\epsilon_{1}+\epsilon_{2}}=2t_{\epsilon_{1}+\epsilon_{2},\epsilon_{1}}=2$ we have, by Lemma \ref{combinedlemma}.\romannumeral1, two options: $k(\epsilon_{1})\in\{k(\epsilon_{1}+\epsilon_{2}),\frac{1}{2}k(\epsilon_{1}+\epsilon_{2})\}$. In the first case $F_{4}^{(1)}$ is a representative, and in the second $E_{6}^{(2)}$ is a representative.

\subsection{$G_{2}$} \

$cl(R)\cong G_{2}$, by Corollary \ref{weylinvariant} $k$ is constant on the orbit $\{\epsilon_{i}-\epsilon_{j}\}$ and on the orbit $\{\pm\epsilon_{i}\}$. \par As $3|t_{\epsilon_{i}-\epsilon_{j},\epsilon_{i}}|=|t_{\epsilon_{i},\epsilon_{i}-\epsilon_{j}}|=3$ by Lemma \ref{combinedlemma}.\romannumeral1~ we have $k_{\epsilon_{i}-\epsilon_{j}}\in\{k_{\epsilon_{i}},3k_{\epsilon_{i}}\}$. In the first case $G_{2}^{(1)}$ is a representative, and in the second $D_{4}^{(3)}$ is a representative.

\subsection{$B(0,n)$}\label{b0nclass1} \

$cl(R)\cong B(0,n)$. As $\{\pm\delta_{i}\pm\delta_{j},\pm2\delta_{i}\}\cong C_n$ by the analysis in \ref{Cn} we have two options.

\par (i) $k(\pm\delta_{i}\pm\delta_{j})=k(\pm2\delta_{i})$. If this is the minimal $k(\alpha)$ in $\{k(\alpha)\}_{\alpha\in cl(R)}$ then (as $|t_{\pm\delta_{i}\pm\delta_{j},\pm\delta_{i}}|=1$) it is also obtained for $k(\delta_{i})$ and so $B(0,n)^{(1)}$ is a representative. If this is not the minimal $k(\alpha)$ in $\{k(\alpha)\}_{\alpha\in cl(R)}$ then (as $|t_{\pm\delta_{i},\pm\delta_{i}\pm\delta_{j}}|=2$ and use \ref{combinedlemma}.\romannumeral1) we must have $k(\delta_{i})=\frac{1}{2}k(2\delta_{i})$ and so $C(n+1)^{(2)}$ is a representative.

\par (ii) $2k(\pm\delta_{i}\pm\delta_{j})=k(\pm2\delta_{i})$. If the minimal $k(\alpha)$ in $\{k(\alpha)\}_{\alpha\in cl(R)}$ is $k(\pm\delta_{i}\pm\delta_{j})$ then (as $|t_{\pm\delta_{i}\pm\delta_{j},\pm\delta_{i}}|=1$) we have $k(\delta_{i})=k(\pm\delta_{i}\pm\delta_{j})$ and so $A(0,2n-1)^{(2)}$ is a non-reduced representative, and $A_{2n}^{(2)}$ is a reduced representative. Otherwise the minimal $k(\alpha)$ in $\{k(\alpha)\}_{\alpha\in cl(R)}$ is $k(\pm\delta_{i})$ and then (as $|t_{\pm\delta_{i},\pm\delta_{i}\pm\delta_{j}}|=2$ and use \ref{combinedlemma}.\romannumeral1) we must have $k(\delta_{i})=\frac{1}{2}k(\pm\delta_{i}\pm\delta_{j})$ and so $A(0,2n)^{(4)}$ is a representative.

\subsection{$A(m,n)$, where $m\geq0,n\geq1,(m,n)\neq(1,1)$} \

$cl(R)\cong A(m,n)$, there is only one $\widetilde{W}$ orbit, so by Corollary \ref{weylinvariant} $k$ is constant, and $A(m,n)^{(1)}$ is a representative.

\subsection{$B(m,n)$, where $m\geq1,n\geq1$} \

$cl(R)\cong B(m,n)$. By Corollary \ref{weylinvariant} $k$ is constant on the orbit $\{\epsilon_{i},\delta_{j}\}$ and on the orbit $\{\pm\epsilon_{i}\pm\epsilon_{j},\pm\delta_{i}\pm\delta_{j},\pm\epsilon_{i}\pm\delta_{j}\}$. As $\{\pm\delta_{i}\pm\delta_{j},\pm2\delta_{i}\}\cong C_{n}$ then by the analysis in \ref{Cn} $k(\pm2\delta_{i})\in\{k(\pm\delta_{i}\pm\delta_{j}),2k(\pm\delta_{i}\pm\delta_{j})\}$. But since for isotropic $\alpha\in cl(R)$ we have $t_{\alpha,\beta}\in\{0,\pm1\}$ for all $\beta$, and in particular this is true for $\beta=\pm2\delta_{i}$: $k(\pm2\delta_{i})\leq k(\pm\epsilon_{i}\pm\delta_{j})=k(\pm\delta_{i}\pm\delta_{j})$ and so $k(\pm2\delta_{i})= k(\pm\epsilon_{i}\pm\delta_{j})$.

\par If $k(\epsilon_{i}+\delta_{j})$ is the minimal in $\{k(\alpha)\}_{\alpha\in cl(R)}$ then (as for isotropic $\alpha\in cl(R)$ we have $t_{\alpha,\beta}\in\{0,\pm1\}$ for all $\beta$) it is the $k(\alpha)$ for all $\alpha\in cl(R)$ and so $k$ is constant and $B(m,n)^{(1)}$ is a representative.

\par Otherwise, the minimal $k(\alpha)$ in $\{k(\alpha)\}_{\alpha\in cl(R)}$ is obtained for $\pm\epsilon_{i}$ (and not for isotropic roots). As $t_{\epsilon_{i},\epsilon_{i}+\delta_{j}}=2$ we must have, by \ref{combinedlemma}.\romannumeral1, that $k(\pm\epsilon_{i}\pm\epsilon_{i})=2k(\pm\epsilon_{i})$. As $k(\delta_{j})=k(\epsilon_{i})$, $D(m+1,n)^{(2)}$ is a representative.

\subsection{$C(n)$, where $n\geq2$} \

$cl(R)\cong C(n)$, there is only one $\widetilde{W}$ orbit, so by Corollary \ref{weylinvariant} $k$ is constant, and $C(n)^{(1)}$ is a representative.

\subsection{$D(m,n)$, where $m\geq2,n\geq1$} \

$cl(R)\cong D(m,n)$, there is only one $\widetilde{W}$ orbit, so by Corollary \ref{weylinvariant} $k$ is constant, and $D(m,n)^{(1)}$ is a representative.

\subsection{$D(2,1;\lambda)$} \

$cl(R)\cong D(2,1;\lambda)$, there is only one $\widetilde{W}$ orbit, so by Corollary \ref{weylinvariant} $k$ is constant, and $D(2,1;\lambda)^{(1)}$ is a representative.

\subsection{$G(3)$} \

$cl(R)\cong G(3)$, there is only one $\widetilde{W}$ orbit, so by Corollary \ref{weylinvariant} $k$ is constant, and $G(3)^{(1)}$ is a representative.

\subsection{$F(4)$} \

$cl(R)\cong F(4)$, there is only one $\widetilde{W}$ orbit, so by Corollary \ref{weylinvariant} $k$ is constant, and $F(4)^{(1)}$ is a representative.

\subsection{$C(m,n),(m,n)\neq(1,1)$} \

$cl(R)\cong C(m,n)$, by Corollary \ref{weylinvariant} $k$ is constant on the set $\{\pm\epsilon_{i}\pm\epsilon_{j},\pm\epsilon_{i}\pm\delta_{j},\pm\delta_{i}\pm\delta_{j}\}$.
\par By Lemma \ref{combinedlemma}.\romannumeral2 ~ $k(2\epsilon_{i})=k(2\delta_{j})=2k(\pm\epsilon_{i}\pm\epsilon_{j})$ and so $A(2m-1,2n-1)^{(2)}$ is a representative.

\subsection{$BC(m,n)$}\label{bcmn} \

$cl(R)\cong BC(m,n)$. By Corollary \ref{weylinvariant} $k$ is constant on the set $\{\pm\epsilon_{i}\pm\epsilon_{j},\pm\epsilon_{i}\pm\delta_{j},\pm\delta_{i}\pm\delta_{j}\}$, and on the set $\{\pm\epsilon_{i},\pm\delta_{j}\}$. By Lemma \ref{combinedlemma}.\romannumeral2  ~ $k(2\epsilon_{i})=k(2\delta_{j})=2k(\epsilon_{i}+\delta_{j})$.
\par
$2|t_{\pm\epsilon_{i}\pm\epsilon_{j},\pm\epsilon_{i}}|=|t_{\pm\epsilon_{i},\pm\epsilon_{i}\pm\epsilon_{j}}|=2$ so by Lemma \ref{combinedlemma}.\romannumeral1~ $k(\pm\epsilon_{i})\in\{k(\pm\epsilon_{i}\pm\epsilon_{j}),\frac{1}{2}k(\pm\epsilon_{i}\pm\epsilon_{j})\}=\{k(\epsilon_{i}+\delta_{j}),\frac{1}{2}k(\epsilon_{i}+\delta_{j})\}$.
In the first case $A(2m,2n-1)^{(2)}$ is a representative, and in the second $A(2m,2n)^{(4)}$ is a representative.

\section{Step \romannumeral4: Proof of the Theorem}\label{pfofthm}

In the following we classify all irreducible AGRSs, and as a result of the classification we prove Theorem \ref{thm1}. We are doing so by going over all possible irreducible weak GRSs $cl(R)$, and finding all possible $R$s corresponding to this class. In this section we always assume $R$ is irreducible.

\subsection{$cl(R)$ is a GRS generated by a set of simple roots} \

Let $cl(R)$ be a GRS generated by a set of simple roots, i.e. $cl(R)$ has one of the forms (1)-(7),(9) with $m\neq n$, or (10)-(15). By Lemma \ref{lem2} $R$ is infinite and its structure is given by Proposition \ref{unik}. In the following Proposition \ref{shiftlem} we show that $cl(R)$ together with the function $k:cl(R)\to\mathbb{Z}_{>0}$ completely determines $R$, and so $R$ is isomorphic to the set of real roots of its representative found above.

\subsubsection{}\label{shiftlem}
\begin{prop}{shiftlem}
Let $R\subset V,R'\subset V'$ be two infinite AGRSs, $cl(R)\cong cl(R')$ is an irreducible GRS generated by a set of simple roots, and $\phi:V/ker(-,-)\to V'/ker(-,-)$ is an isomorphism of the GRSs $cl(R)$ and $cl(R')$. Assume that for every $\alpha\in cl(R)$: $k(\alpha)=k(\phi(\alpha))$. Then $R$ is isomorphic to $R'$.
\end{prop}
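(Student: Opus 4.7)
The plan is to build an explicit linear isomorphism $\widetilde{\phi}\colon V\to V'$ that lifts $\phi$ and then verify $\widetilde{\phi}(R)=R'$ using the generation of $cl(R)$ by $\Pi$ together with the rigidity of preimages provided by Proposition~\ref{unik}. First I would fix a set of simple roots $\Pi=\{\alpha_1,\dots,\alpha_n\}$ of $cl(R)$, arbitrary preimages $\alpha_i'\in cl^{-1}(\alpha_i)\subset R$ and $\alpha_i''\in cl^{-1}(\phi(\alpha_i))\subset R'$, and elements $\delta\in\ker(-,-)\subset V$, $\delta'\in\ker(-,-)\subset V'$ as in Proposition~\ref{unik}. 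Since $\Pi$ is a basis of the $n$-dimensional quotient $V/\ker(-,-)$, the set $\{\alpha_1',\dots,\alpha_n',\delta\}$ is a basis of $V$ (it has the right cardinality and lifts $\Pi\cup\{0\}$), and similarly on the primed side. I would then define $\widetilde{\phi}$ as the unique linear map with $\widetilde{\phi}(\alpha_i')=\alpha_i''$ and $\widetilde{\phi}(\delta)=\delta'$; it preserves the form up to the same scalar $a\in\mathbb{C}^*$ as $\phi$, because pairings on $V,V'$ only depend on the classes in the quotients.

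The real content is to show $\widetilde{\phi}(R)\subseteq R'$; running the same argument for $\widetilde{\phi}^{-1}$ then yields the reverse inclusion. By Proposition~\ref{unik}, every $\gamma\in R$ is of the form $\gamma=\alpha_0+mk(\alpha)\delta$ for some $\alpha\in cl(R)$, some lift $\alpha_0\in cl^{-1}(\alpha)$ and some $m\in\mathbb{Z}$. Because $\widetilde{\phi}(\delta)=\delta'$ and $k(\alpha)=k(\phi(\alpha))$ by hypothesis, the task reduces to producing, for each $\alpha\in cl(R)$, a single preimage $\alpha_0\in cl^{-1}(\alpha)$ with $\widetilde{\phi}(\alpha_0)\in cl^{-1}(\phi(\alpha))$; any other preimage then automatically lands in $cl^{-1}(\phi(\alpha))$ via the shift by $\mathbb{Z}k(\phi(\alpha))\delta'$.

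I would produce such lifts by induction along the filtration $S_0\subseteq S_1\subseteq\dots$ of Definition~\ref{stronglygen} associated to $S=\Pi$, which exhausts $cl(R)$. The base case is the chosen correspondence $\alpha_i\mapsto\alpha_i'$. For the inductive step, given $\beta,\gamma\in cl(R)$ already equipped with lifts $\beta_0,\gamma_0\in R$ whose images $\widetilde{\phi}(\beta_0),\widetilde{\phi}(\gamma_0)$ are lifts of $\phi(\beta),\phi(\gamma)$ in $R'$, I would take $\alpha_0:=r_{\beta_0}(\gamma_0)\in R$ and verify the reflection compatibility $\widetilde{\phi}(r_{\beta_0}(\gamma_0))=r_{\widetilde{\phi}(\beta_0)}(\widetilde{\phi}(\gamma_0))\in R'$, which is automatically a lift of $r_{\phi(\beta)}(\phi(\gamma))=\phi(r_\beta(\gamma))$. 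When $(\beta,\beta)\neq 0$ both reflections are given by the linear formula $\gamma_0-\tfrac{2(\beta_0,\gamma_0)}{(\beta_0,\beta_0)}\beta_0$, so the identity is immediate from the linearity of $\widetilde{\phi}$ and its preservation of the form up to the scalar $a$.

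The main technical point, and the step I expect to be the main obstacle, is the analogous identity when $\beta$ is isotropic, where $r_{\beta_0}(\gamma_0)\in\{\gamma_0+\beta_0,\gamma_0-\beta_0\}$ and the sign is not given by a formula but by membership in $R$. The crucial observation is that by the uniqueness of odd reflections (Lemma~1.11 of \cite{VGRS}, quoted after Definition~\ref{affinegrs}), the sign in $cl(R)$ is uniquely determined by $r_\beta(\gamma)\in\{\gamma+\beta,\gamma-\beta\}$, and because $\phi$ is an isomorphism of GRSs one has $r_{\phi(\beta)}(\phi(\gamma))=\phi(r_\beta(\gamma))$, so the sign selected in $cl(R')$ matches. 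Hence $r_{\widetilde{\phi}(\beta_0)}(\widetilde{\phi}(\gamma_0))=\widetilde{\phi}(\gamma_0)\pm\widetilde{\phi}(\beta_0)=\widetilde{\phi}(\gamma_0\pm\beta_0)=\widetilde{\phi}(r_{\beta_0}(\gamma_0))$ with the same sign, closing the induction and yielding $\widetilde{\phi}(R)=R'$.
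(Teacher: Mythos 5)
Your proposal is correct and follows essentially the same route as the paper: lift the simple roots, extend to a basis of $V$ by $\delta$, propagate lifts of all of $cl(R)$ through the generating reflections, and recover the full fibers from Proposition~\ref{unik} and the hypothesis $k(\alpha)=k(\phi(\alpha))$. The only difference is one of detail — you spell out the sign-matching for odd reflections via the uniqueness of $|\{\beta\pm\alpha\}\cap R|=1$, which the paper compresses into the assertion that the generated lifts ``depend only on the classes'' — and that verification is exactly right.
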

\begin{proof}
We take $\{\alpha_{i}\}_{i=1}^{n}\subset cl(R)$ a set of simple roots, and fix some $\{\alpha'_{i}\}_{i=1}^{n}\subset R$ such that for all $i:cl(\alpha'_{i})=\alpha_{i}$. Clearly, the set $\{\alpha'_{i}\}_{i=1}^{n}\sqcup\{\delta\}$ is a basis of $V$. Define $V_{1}:=span_{\mathbb{C}}\{\alpha'_{i}\}_{i=1}^{n}$ and denote $cl(R)=\{\alpha_{i}\}_{i=1}^{N}$, when $N>n$.

\par As the set $\{\alpha'_{i}\}_{i=1}^{n}\subset R$ generates a copy of $cl(R)$ we have that for all $i\in\{1,2,...,n,n+1,...,N\}$ there exists $\alpha'_{i}\in V_{1}$ such that $cl(\alpha'_{i})=\alpha_{i}$. Note that these roots are obtained by simple reflections, and these reflections depend only on the classes $\{\alpha_{i}\}_{i=1}^{n}\subset cl(R)$ and not the representatives $\{\alpha'_{i}\}_{i=1}^{n}\subset R$ chosen. Thus we have for all $i\in\{1,2,...,n,n+1,...,N\}:cl^{-1}(\alpha_{i})=\{\alpha'_{i}+sk(\alpha_{i})\delta\}_{s\in\mathbb{Z}}$, and $R=\{\alpha'_{i}+sk(\alpha_{i})\delta\}_{i=1,2,...,N}^{s\in\mathbb{Z}}$.

\par If $R'\subset V'$ satisfies $\{\alpha_{i}\}_{i=1}^{N}=cl(R)\cong cl(R')=\{\phi(\alpha_{i})\}_{i=1}^{N}$, then the linear map from $V$ to $V'$ defined by $\alpha_{i}\mapsto\phi(\alpha_{i})'$ for all $i\in\{1,2,...,n\}$, $\delta\mapsto\delta'$ (note that now $R'=\{\phi(\alpha_{i})'+sk(\phi(\alpha_{i}))\delta'\}_{i=1,2,...,N}^{s\in\mathbb{Z}}$) gives an isomorphism of the AGRSs $R$ and $R'$.
\end{proof}

\subsection{$cl(R)\cong B(0,n)$}\label{bzeroncase} \

Let $cl(R)=B(0,n)$, i.e. $cl(R)$ has the form (8). We will see that now $cl(R)$ together with the function $k$ almost always determine $R$. We start by the following auxiliary Lemma \ref{B0nlcomp}:

\subsubsection{}\label{B0nlcomp}
\begin{lem}{B0nlcomp}
Let $R$ be a reduced AGRS, such that $cl(R)=B(0,n)$. Then $k(2\delta_{i})=2k(\delta_{i}-\delta_{j})=2k(\delta_{i})$.
\end{lem}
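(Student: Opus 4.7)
The plan is to combine the arithmetic divisibility constraints on $k$-values from Lemma \ref{combinedlemma}(i) with the reducedness hypothesis, together with a key reflection computation linking the lift of $2\delta_i$ to the lifts of $\delta_i \pm \delta_j$. First, since $cl(R)=B(0,n)$ is irreducible and not isomorphic to $C(1,1)$ or any $cl(\tilde{A}(n,n))$, Proposition \ref{prop1}, Lemma \ref{lem2}, and Proposition \ref{unik} make $R$ an infinite, irreducible, non-peculiar AGRS, so a uniform $\delta \in \ker(-,-)$ and function $k$ exist. Set $a := k(\delta_i)$, $m := k(\delta_i-\delta_j)$, $b := k(2\delta_i)$ (well-defined and independent of $i,j$ by Corollary \ref{weylinvariant}). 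Applying Lemma \ref{combinedlemma}(i) to the three pairs $(\delta_i, 2\delta_i),(\delta_i, \delta_i-\delta_j),(\delta_i-\delta_j, 2\delta_i)$ and their reverses (valid since $B(0,n)$ has no isotropic roots) will yield the chains $a \mid m \mid 2a$, $a \mid b \mid 4a$, $m \mid b \mid 2m$, leaving four possible ratios $(m/a,\,b/a) \in \{(1,1),(1,2),(2,2),(2,4)\}$---matching the four subcases of \ref{b0nclass1}.

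Next, I will fix lifts $\delta_i', \delta_j' \in R$ of $\delta_i, \delta_j$ and write $(2\delta_i)' = 2\delta_i' + c_1 \delta$, $(\delta_i \pm \delta_j)' = \delta_i' \pm \delta_j' + c_\pm \delta$. Three reflection calculations will pin down these twists: (i) $r_{(2\delta_i)'}(\delta_i') = -\delta_i' - c_1 \delta$, which must lie in $cl^{-1}(-\delta_i)$, forces $c_1 \in a\mathbb{Z}$, so $c_1 = s_1 a$; (ii) $r_{\delta_i'}((\delta_i-\delta_j)') = -\delta_i'-\delta_j'+c_-\delta \in cl^{-1}(-\delta_i-\delta_j)$ and $r_{\delta_j'}((\delta_i-\delta_j)') = \delta_i'+\delta_j'+c_-\delta \in cl^{-1}(\delta_i+\delta_j)$ give $c_- \pm c_+ \in m\mathbb{Z}$; (iii) the key identity $r_{(2\delta_i)'}((\delta_i-\delta_j)') = -\delta_i'-\delta_j'+(c_--c_1)\delta \in cl^{-1}(-\delta_i-\delta_j)$ yields $c_1 - (c_- + c_+) \in m\mathbb{Z}$. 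Combined with (ii), this delivers $c_1 \in m\mathbb{Z}$, i.e.\ $m \mid s_1 a$.

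I then translate reducedness into an arithmetic condition: for every $s \in \mathbb{Z}$, the element $2(\delta_i' + s a\delta) = 2\delta_i' + 2sa\delta$ must not lie in $cl^{-1}(2\delta_i) = 2\delta_i' + (s_1 a + b\mathbb{Z})\delta$, i.e.\ $(2s-s_1)a \notin b\mathbb{Z}$ for all $s$. Taking $s=0$ immediately rules out $b=a$; in the remaining cases $b/a \in \{2,4\}$, the condition (as $2s$ sweeps $2\mathbb{Z}$) forces $s_1$ to be odd. Combining with $m \mid s_1 a$: if $m = 2a$ then $2 \mid s_1$, contradicting $s_1$ odd. Hence $m = a$, and together with the divisibility bounds and $b \neq a$ this forces $b = 2a$. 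Therefore $k(2\delta_i) = 2a = 2k(\delta_i) = 2k(\delta_i-\delta_j)$.

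The main obstacle I anticipate is spotting reflection (iii): reducedness alone only eliminates the ratio $b/a=1$ (corresponding to the non-reduced affinization $B(0,n)^{(1)}$), and the integrality relation $c_1 \in m\mathbb{Z}$ produced by this particular three-reflection chain is precisely what is needed to exclude the remaining twisted candidates $C(n+1)^{(2)}$ (where $(m/a,b/a)=(2,2)$) and $A(0,2n)^{(4)}$ (where $(2,4)$), leaving only the reduced representative $A_{2n}^{(2)}$ with $(m/a,b/a)=(1,2)$.
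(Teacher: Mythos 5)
Your argument is sound and, for $n\geq 2$, reaches the conclusion by essentially the same mechanism as the paper: reducedness forces the fiber $cl^{-1}(2\delta_i)$ to be offset from $2\,cl^{-1}(\delta_i)$ by a ``half-period'' twist ($s_1$ odd), and comparing the reflections in $\delta_i'$ and in the lift of $2\delta_i$ acting on the lift of $\delta_i-\delta_j$ (your step (iii), the paper's final displayed computation) kills the $C(n+1)^{(2)}$ pattern $(m/a,b/a)=(2,2)$. Your packaging via congruences on the twists $c_1,c_\pm$ is cleaner bookkeeping than the paper's, and your divisibility chains from Lemma \ref{combinedlemma}(i) correctly reproduce the four candidates of \ref{b0nclass1}.

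There is, however, one concrete gap: the case $n=1$. There the roots $\delta_i\pm\delta_j$ do not exist, so $m$ is undefined and your steps (ii), (iii) --- and hence the relation $m\mid s_1a$, which is your only tool against the ratio $b/a=4$ --- are unavailable. For $n=1$ your constraints ($a\mid b\mid 4a$, $c_1=s_1a$, $s_1$ odd) are consistent with $b=4a$, so you have not proved $k(2\delta_1)=2k(\delta_1)$, which is exactly what the subsequent corollary needs to separate $A_2^{(2)}$ from $A(0,2)^{(4)}$. The missing ingredient is the one reflection you did not use, and which the paper opens with: $r_{\delta_i'}\bigl(2\delta_i'+s_1a\delta\bigr)=-2\delta_i'+s_1a\delta$ must lie in $-cl^{-1}(2\delta_i)=\{-2\delta_i'-s_1a\delta+b\mathbb{Z}\delta\}$, forcing $2s_1a\in b\mathbb{Z}$. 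With $b=4a$ this says $s_1$ is even, contradicting $s_1$ odd; so $b\in\{a,2a\}$ already for $n=1$, and $b=a$ is excluded by your $s=0$ observation. Adding this one line closes the gap (and also makes the exclusion of $(2,4)$ for $n\geq2$ independent of step (iii), leaving (iii) to do only the $(2,2)$ case, which is how the paper organizes it).
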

\begin{proof}
Let $\delta'_{i}\in cl^{-1}(\delta_{i})$. We have $2\delta'_{i}+l\delta\in R$ for some $l\in\mathbb{C}$.
$$r_{\delta'_{i}}(2\delta'_{i}+l\delta)=-(2\delta'_{i}-l\delta)\in R,$$ and so $2l\equiv0~(mod~k(2\delta_{i}))$. As $l$ is arbitrary up to $\mathbb{Z}k(2\delta_{i})$: $l\not\equiv0~(mod~k(2\delta_{i}))$ (otherwise also $2\delta'_{i}\in R$, a contradiction to $R$ being reduced) and so we may choose $l=\frac{k(2\delta_{i})}{2}$.
$$r_{2\delta'_{i}+\frac{k(2\delta_{i})}{2}\delta}(\delta'_{i})=-(\delta'_{i}+\frac{k(2\delta_{i})}{2}\delta),$$ and so
$$k(2\delta_{i})\neq k(\delta_{i}).$$ We also know
$$k(2\delta_{i})\neq4k(\delta_{i}),$$ otherwise we get a contradiction to $R$ being reduced again. By \ref{b0nclass1} it is left to show that if $n>1$ then $k(\delta_{i}+\delta_{j})\neq k(2\delta_{i})$. Assume $n>1$, assume $k(\delta_{i}+\delta_{j})=k(2\delta_{i})$, and take some $p\in\mathbb{C}$ such that $\delta'_{j}-\delta'_{i}+p\delta\in R$. One has:
$$r_{\delta'_{i}}(\delta'_{j}-\delta'_{i}+p\delta)=\delta'_{j}+\delta'_{i}+p\delta,r_{2\delta'_{i}+\frac{k(\delta_{i}+\delta_{j})}{2}\delta}(\delta'_{j}-\delta'_{i}+p\delta)=\delta'_{j}+\delta'_{i}+(p+\frac{k(\delta_{i}+\delta_{j})}{2})\delta,$$ a contradiction.
\end{proof}

\subsubsection{}\label{b0nfinalprop}
\begin{prop}{}
Let $R\subset V,R'\subset V'$ be two AGRSs, $cl(R)\cong cl(R')\cong B(0,n)$, when $\phi:V/ker(-,-)\to V'/ker(-,-)$ is an isomorphism of the GRSs $cl(R)$ and $cl(R')$. Assume that for every $\alpha\in cl(R)$: $k(\alpha)=k(\phi(\alpha))$, and assume either both $R$ and $R'$ are reduced or both $R$ and $R'$ are non-reduced. Then $R$ is isomorphic to $R'$.
\end{prop}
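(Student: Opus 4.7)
The plan is to mirror Proposition \ref{shiftlem}, using the fact that $B(0,n) = B_n \cup \{\pm 2\delta_i\}_{i=1}^{n}$, where the subsystem $B_n$ is a GRS generated by the standard set of simple roots $\Pi = \{\delta_1 - \delta_2, \dots, \delta_{n-1} - \delta_n, \delta_n\}$ (and just $\{\delta_1\}$ when $n=1$). The argument splits cleanly into two parts: controlling the fibers over $B_n$ and controlling the fibers over the doubled roots $\pm 2\delta_i$.

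First, I would lift $\Pi$ to representatives $\alpha'_i \in R$, and via $\phi$ pick corresponding $\alpha''_i \in R'$. Applying the reflection-generation argument of the proof of Proposition \ref{shiftlem} to the $B_n$-subsystem of $cl(R)$, I obtain reflection-derived lifts $\delta'_i \in R$ of each $\delta_i$ and then representatives of every root in $B_n$; combined with Proposition \ref{unik} this gives the complete description $cl^{-1}(\alpha) = \alpha' + \mathbb{Z}k(\alpha)\delta$ for every $\alpha \in B_n \subset cl(R)$. The analogous data holds in $R'$ with identical shifts by the hypothesis $k(\alpha) = k(\phi(\alpha))$.

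Second, I would pin down the fibers $cl^{-1}(\pm 2\delta_i)$, each of which has the form $\pm(2\delta'_i + c_i\delta) + \mathbb{Z}k(2\delta_i)\delta$ for some offset $c_i \in \mathbb{C}/k(2\delta_i)\mathbb{Z}$. The key claim is that $c_i$ is forced by the reduced/non-reduced hypothesis, uniformly in $i$. In the reduced case, Lemma \ref{B0nlcomp} gives $k(2\delta_i) = 2k(\delta_i)$, and the displayed computation in its proof pins $c_i \equiv k(\delta_i) \pmod{k(2\delta_i)}$. In the non-reduced case, some $\tilde{\delta}_n \in cl^{-1}(\delta_n)$ satisfies $2\tilde{\delta}_n \in R$; I re-choose $\alpha'_n := \tilde{\delta}_n$, so that $2\delta'_n \in R$. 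Applying the linear reflection $r_{\delta'_n - \delta'_i}$ (which is well-defined since $\delta_n - \delta_i$ is non-isotropic) sends $2\delta'_n$ to $2\delta'_i$, and hence $2\delta'_i \in R$, giving $c_i \equiv 0$ for every $i$. The same normalization is carried out in $R'$.

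Finally, I would extend $\alpha'_i \mapsto \alpha''_i$ together with $\delta \mapsto \delta'$ (where $\delta'$ is the kernel vector for $R'$ from Proposition \ref{unik}, with signs chosen so the $k$-values match) linearly to a vector-space isomorphism $\psi : V \to V'$. By construction $\psi$ carries each $B_n$-fiber of $R$ to the corresponding $B_n$-fiber of $R'$; by the previous paragraph it also carries each $\pm 2\delta_i$-fiber correctly (after the common normalization). Rescaling the form on $V'$ by the scalar inherited from the isomorphism $\phi$ of $cl(R)$ and $cl(R')$ yields an isomorphism of AGRSs. The main obstacle is the non-reduced case: showing that the single adjustment to $\delta'_n$ simultaneously produces $2\delta'_i \in R$ for every $i$; this is precisely where the $B_n$-Weyl reflection $r_{\delta'_n - \delta'_i}$ is used to propagate the normalization across all the doubled roots.
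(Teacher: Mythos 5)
Your proposal is correct and follows essentially the same route as the paper: lift the simple roots of $B_n\subset B(0,n)$, generate representatives of all non-doubled fibers by reflections, and then fix the offsets of the $\pm2\delta_i$-fibers via Lemma \ref{B0nlcomp} in the reduced case and a normalization of the lift of $\delta_n$ in the non-reduced case. Your propagation of the non-reduced normalization to all $i$ via $r_{\delta'_n-\delta'_i}$ is in fact spelled out more explicitly than in the paper, which only asserts that the $\alpha'_i$ "may always be chosen" so that $2\alpha'_i\in R$.
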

\begin{proof}
We take $\{\alpha_{i}\}_{i=1}^{n}\subset cl(R)$ a set of simple roots of $B_{n}\subset B(0,n)$, and fix some $\{\alpha'_{i}\}_{i=1}^{n}\subset R$ such that for all $i:cl(\alpha'_{i})=\alpha_{i}$. Clearly, the set $\{\alpha'_{i}\}_{i=1}^{n}\sqcup\{\delta\}$ is a basis of $V$. Define $V_{1}:=span_{\mathbb{C}}\{\alpha'_{i}\}_{i=1}^{n}$ and denote $cl(R)=\{\alpha_{i}\}_{i=1}^{N}$, when $N>n$.

\par As $\{\alpha'_{i}\}_{i=1}^{n}\subset R$ generate a copy of $B_{n}$ we have that for all $i\in\{1,2,...,n,n+1,...,N\}$ such that $cl(\alpha'_{i})\neq\pm2\delta_{i}$ there exists $\alpha'_{i}\in V_{1}$ such that $cl(\alpha'_{i})=\alpha_{i}$. Note that these roots are obtained by simple reflections, and these reflections depend only on the classes $\{\alpha_{i}\}_{i=1}^{n}\subset cl(R)$ and not the representatives $\{\alpha'_{i}\}_{i=1}^{n}\subset R$ chosen. Thus we have for all such $i$s $cl^{-1}(\alpha_{i})=\{\alpha'_{i}+sk(\alpha_{i})\delta\}_{s\in\mathbb{Z}}$.

\par Finally, assume $R'\subset V'$ satisfies $\{\alpha_{i}\}_{i=1}^{N}=cl(R)\cong cl(R')=\{\phi(\alpha_{i})\}_{i=1}^{N}$. We observe the linear map from $V$ to $V'$ defined by $\alpha_{i}\mapsto\phi(\alpha_{i})'$ for all $i\in\{1,2,...,n\}$, $\delta\mapsto\delta'$ (note that now $R'=\{\phi(\alpha_{i})'+sk(\phi(\alpha_{i}))\delta'\}_{i=1,2,...,N}^{s\in\mathbb{Z}}$).
If both $R$ and $R'$ are reduced then by Lemma \ref{B0nlcomp} for all $cl(\alpha'_{i})=\delta_{i}$ one has $2\alpha'_{i}+\frac{k(2\delta_{i})}{2}\delta\in R$ and $\phi(2\delta_{i}+\frac{k(2\delta_{i})}{2}\delta)\in R'$. So this is an isomorphism of $R$ and $R'$. Similarly if both $R$ and $R'$ are not reduced then it is again an isomorphism, as then we may always choose these $\alpha'_{i}$s such that $2\alpha'_{i}$s are also roots.
\end{proof}

\subsubsection{}
\begin{cor}{}
Let $R$ be an AGRS such that $cl(R)\cong B(0,n)$, then $R$ is isomorphic to one of the following: $B(0,n)^{(1)},C(n+1)^{(2)},A_{2n}^{(2)},A(0,2n-1)^{(2)}$ and $A(0,2n)^{(4)}$.
\end{cor}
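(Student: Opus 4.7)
The plan is to combine the two main ingredients that immediately precede this corollary. The case analysis in \ref{b0nclass1} enumerates every function $k\colon cl(R)\to\mathbb{Z}_{>0}$ that can arise when $cl(R)\cong B(0,n)$, and in each case exhibits a symmetrizable affine Kac--Moody superalgebra whose set of real roots realizes that data. Proposition~\ref{b0nfinalprop} then says that once $cl(R)$, the function $k$, and the reduced/non-reduced status of $R$ are fixed, $R$ is determined up to isomorphism. So the corollary reduces to matching each case of \ref{b0nclass1} with exactly one of the five listed representatives.

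First I would check that $R$ is infinite and non-peculiar, so that the invariant $k$ of Proposition~\ref{unik} is well defined. Since $cl(R)\cong B(0,n)$ is not isomorphic to either $A(n,n)$ or $C(1,1)$, Lemma~\ref{lem2} forces $R$ to be infinite, and peculiarity is excluded by $cl(R)\not\cong C(1,1)$. Next I would walk through the cases of \ref{b0nclass1}. In case (i) with $k$ constant, the representative is $B(0,n)^{(1)}$. In case (i) with $k(\delta_i)=\tfrac{1}{2}k(2\delta_i)=\tfrac{1}{2}k(\pm\delta_i\pm\delta_j)$, the representative is $C(n+1)^{(2)}$. In case (ii) with $k(\delta_i)=k(\pm\delta_i\pm\delta_j)$ and $k(2\delta_i)=2k(\delta_i)$, the representative is $A(0,2n-1)^{(2)}$ when $R$ is non-reduced and $A_{2n}^{(2)}$ when $R$ is reduced. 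In the remaining case of (ii), with $k(\delta_i)=\tfrac{1}{2}k(\pm\delta_i\pm\delta_j)$ and $k(2\delta_i)=2k(\pm\delta_i\pm\delta_j)$, the representative is $A(0,2n)^{(4)}$. In each situation, applying Proposition~\ref{b0nfinalprop} with $\phi$ taken to be the identification of $cl(R)$ with the closure of the real roots of the representative yields the required isomorphism.

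The main obstacle I expect is not logical but bookkeeping: one must confirm that each of the five listed affine Kac--Moody superalgebras really has the $k$-function and the reduced/non-reduced character that \ref{b0nclass1} assigns to it. The delicate point is the split in case (ii) with equal values on $\{\pm\delta_i,\pm\delta_i\pm\delta_j\}$; Lemma~\ref{B0nlcomp} shows that for reduced $R$ the long roots $2\delta_i$ are forced to appear only at the half-shift $\tfrac{k(2\delta_i)}{2}\delta$, whereas in the non-reduced regime they also appear at integer multiples of $k(\delta_i)\delta$, and this is precisely the distinction between $A_{2n}^{(2)}$ and $A(0,2n-1)^{(2)}$. Once these explicit real-root descriptions of the representatives are verified, the corollary follows immediately by combining \ref{b0nclass1} with Proposition~\ref{b0nfinalprop}.
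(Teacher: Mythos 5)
Your proposal is correct and follows essentially the same route as the paper, whose proof is precisely "recall that $A_{2n}^{(2)}$ is reduced and $A(0,2n-1)^{(2)}$ is non-reduced, and apply Lemma~\ref{B0nlcomp} and Proposition~\ref{b0nfinalprop}." Your more detailed matching of the cases of \ref{b0nclass1} with the five representatives, and your observation that Lemma~\ref{B0nlcomp} is what separates $A_{2n}^{(2)}$ from $A(0,2n-1)^{(2)}$, is exactly the bookkeeping the paper leaves implicit.
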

\begin{proof}
Recall that $A_{2n}^{(2)}$ is reduced and $A(0,2n-1)^{(2)}$ is non-reduced, and apply Lemma \ref{B0nlcomp} and Proposition \ref{b0nfinalprop}.
\end{proof}

\subsection{$cl(R)\cong A(n,n),n\geq2$} \

Let $cl(R)=A(n,n),n\geq2$, i.e. $cl(R)$ has the form (9) with $m=n\geq2$. If $R$ is finite then by Lemma \ref{lem2} $R\cong\tilde{A}(n,n)$. Assume $R$ is infinite.
\par We fix $\epsilon'_{i},\delta'_{i}\in V$ such that $\epsilon'_{i}-\epsilon'_{j},\delta'_{i}-\delta'_{j}\in R$ and $cl(\epsilon'_{i}-\epsilon'_{j})=\epsilon_{i}-\epsilon_{j},cl(\delta'_{i}-\delta'_{j})=\delta_{i}-\delta_{j}$. Recall there is only one $\widetilde{W}$ orbit and so $k$ is constant. It is easy to verify that $cl^{-1}(\{\epsilon_{i}-\epsilon_{j}\})$ and $cl^{-1}(\{\delta_{i}-\delta_{j}\})$ are both AGRSs with $cl(cl^{-1}(\{\epsilon_{i}-\epsilon_{j}\}))\cong cl(cl^{-1}(\{\delta_{i}-\delta_{j}\}))\cong A_{n}$. So we have $$cl^{-1}(\epsilon_{i}-\epsilon_{j})=\{\epsilon'_{i}-\epsilon'_{j}+\mathbb{Z}\delta\},cl^{-1}(\delta_{i}-\delta_{j})=\{\delta'_{i}-\delta'_{j}+\mathbb{Z}\delta\},$$
$$cl^{-1}(\{\epsilon_{i}-\epsilon_{j}\})\cong A_{n}^{(1)}\cong cl^{-1}(\{\delta_{i}-\delta_{j}\}).$$
Clearly there exists some $q\in\mathbb{C}$ such that $\epsilon'_{1}-\delta'_{1}+q\delta\in R$. By applying the generalized Weyl group action it is easy to see that for all $i,j$ one has $\epsilon'_{i}-\delta'_{j}+q\delta\in R$, and
$$cl^{-1}(\pm(\epsilon_{i}-\delta_{j}))=\{\pm(\epsilon'_{i}-\delta'_{j}+q\delta)+\mathbb{Z}\delta\}.$$
Thus we have $R\cong\tilde{A}(n,n)_{q}^{(1)}$. Verifying that $\tilde{A}(n,n)_{q}^{(1)}$ is indeed an AGRS for every $q\in\mathbb{C}$ is straightforward. We finish this part with the following lemma:

\subsubsection{}\label{easyiso}
\begin{lem}{}
$\tilde{A}(n,n)_{q}^{(1)}\cong\tilde{A}(n,n)_{q'}^{(1)}$ if and only if either $q-q'\in\mathbb{Z}$ or $q+q'\in\mathbb{Z}$.
\end{lem}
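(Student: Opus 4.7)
The plan is to prove both directions of the equivalence by working directly with the description of $\tilde{A}(n,n)_{q}^{(1)}$ as the image of $\tilde{A}(n,n)^{(1)}\subset V''$ in the quotient $V''/\mathbb{C}(q\delta+Id)$, constructing explicit isomorphisms for the sufficient direction and analyzing lifts of automorphisms of $A(n,n)$ for the converse.

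For sufficiency I exhibit two families of isomorphisms whose composites cover every case allowed by the conclusion. If $q'=q+k$ with $k\in\mathbb{Z}$, the linear map $\psi:V''\to V''$ that fixes every basis vector except $\delta_{1}\mapsto\delta_{1}-k\delta$ preserves the bilinear form, satisfies $\psi(q\delta+Id)=(q+k)\delta+Id$, and stabilizes $\tilde{A}(n,n)^{(1)}\subset V''$ setwise (each root $\delta_{1}-\delta_{j}+s\delta$ or $\pm(\epsilon_{i}-\delta_{1})+s\delta$ has its $\delta$-coefficient shifted by $\pm k$, landing back in $\tilde{A}(n,n)^{(1)}$); it therefore descends to an isomorphism of AGRSs. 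If $q'=-q$, the swap $\sigma:\epsilon_{i}\mapsto\delta_{i}$, $\delta_{i}\mapsto\epsilon_{i}$, $\delta\mapsto\delta$ sends $q\delta+Id$ to $-(-q\delta+Id)$, preserves the form up to the scalar $-1$, and sends each odd root $\pm(\epsilon_{i}-\delta_{j})$ to $\mp(\epsilon_{j}-\delta_{i})$. Composing the two families produces an isomorphism whenever $q-q'\in\mathbb{Z}$ or $q+q'\in\mathbb{Z}$.

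For necessity, let $\bar{\psi}:V_{s}\to V_{t}$ be an isomorphism of AGRSs, where I write $V_{s}=V''/\mathbb{C}(q\delta+Id)$ and $V_{t}=V''/\mathbb{C}(q'\delta+Id)$. Since $\bar{\psi}$ preserves the $1$-dimensional kernel of the form, $\bar{\psi}(\bar{\delta})=c\bar{\delta}$ for some $c\in\mathbb{C}^{*}$, and $\bar{\psi}$ descends to an automorphism $\tilde{\psi}$ of the GRS $A(n,n)$. The automorphism group of $A(n,n)$ is generated by the index permutations in $S_{n+1}\times S_{n+1}$, the global negation, and the swap $\sigma$; the first two types lift to self-isomorphisms of $\tilde{A}(n,n)_{q}^{(1)}$, while $\sigma$ lifts to an isomorphism $\tilde{A}(n,n)_{q}^{(1)}\to\tilde{A}(n,n)_{-q}^{(1)}$. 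Pre-composing $\bar{\psi}$ with suitable lifts, we reduce to the case $\tilde{\psi}=\id$, possibly after replacing $q$ by $-q$ along the way.

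In this reduced setting, write $\bar{\psi}(\bar{\epsilon}_{i})=\bar{\epsilon}_{i}+a_{i}\bar{\delta}$ and $\bar{\psi}(\bar{\delta}_{j})=\bar{\delta}_{j}+b_{j}\bar{\delta}$. Tracking the action on each coset $\alpha+\mathbb{Z}\bar{\delta}$ of roots and demanding bijectivity onto the corresponding coset in $\tilde{A}(n,n)_{q'}^{(1)}$ forces $c=\pm 1$ and $a_{i}-a_{j},\ b_{i}-b_{j},\ a_{i}-b_{j}\in\mathbb{Z}$ for all $i,j$. Applying $\bar{\psi}$ to the defining relation $\sum_{i}\bar{\epsilon}_{i}-\sum_{i}\bar{\delta}_{i}+q\bar{\delta}=0$ of $V_{s}$ and comparing with the relation $\sum_{i}\bar{\epsilon}_{i}-\sum_{i}\bar{\delta}_{i}=-q'\bar{\delta}$ of $V_{t}$ gives $q'=qc+\sum_{i}(a_{i}-b_{i})\in qc+\mathbb{Z}$, which reads $q'-q\in\mathbb{Z}$ when $c=1$ and $q'+q\in\mathbb{Z}$ when $c=-1$. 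The most delicate step will be this initial reduction via the outer automorphism structure of $A(n,n)$: one must carefully identify which automorphisms lift to self-isomorphisms of $\tilde{A}(n,n)_{q}^{(1)}$ and which instead toggle $q\leftrightarrow-q$, so that the resulting case analysis is exhaustive.
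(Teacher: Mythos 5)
Your argument is correct and matches the paper's proof in all essentials: both directions rest on the same ingredients, namely explicit isomorphisms for sufficiency (integer shifts and the $\epsilon\leftrightarrow\delta$ swap, which toggles $q\leftrightarrow -q$ and sends $\delta\mapsto-\delta$), and for necessity the normalization of the induced automorphism of $A(n,n)$ followed by applying the map to the defining relation $\sum_{i}\bar{\epsilon}_{i}-\sum_{i}\bar{\delta}_{i}+q\bar{\delta}=0$. The only packaging difference is that you invoke the generation of $\Aut(A(n,n))$ by $S_{n+1}\times S_{n+1}$, $-\id$ and $\sigma$ (which is indeed true for $n\geq2$, the relevant case), whereas the paper carries out the same normalization step-by-step along a chain of simple roots without needing to name the full automorphism group.
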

\begin{proof}
Let $\phi$ be an isomorphism of $\tilde{A}(n,n)_{q}^{(1)}$ and $\tilde{A}(n,n)_{q'}^{(1)}$:
$$\tilde{A}(n,n)_{q}^{(1)}:=\{\epsilon_{i}-\epsilon_{j}+\mathbb{Z}\delta,\delta_{i}-\delta_{j}+\mathbb{Z}\delta\}_{i\neq j}^{i,j=1,2,..n+1}\cup\{\pm(\epsilon_{i}-\delta_{j})+Id+(q+\mathbb{Z})\delta\}_{i=1,2,..,n+1}^{j=1,2,..,n+1},$$
$$\tilde{A}(n,n)_{q'}^{(1)}:=\{\epsilon_{i}-\epsilon_{j}+\mathbb{Z}\delta,\delta_{i}-\delta_{j}+\mathbb{Z}\delta\}_{i\neq j}^{i,j=1,2,..n+1}\cup\{\pm(\epsilon_{i}-\delta_{j})+Id+(q'+\mathbb{Z})\delta\}_{i=1,2,..,n+1}^{j=1,2,..,n+1}.$$

\par Observing $\phi(\epsilon_{1}-\epsilon_{2}+\mathbb{Z}\delta)$ it is clear that $\phi(\delta)\in\{\pm\delta\}$.

\par Clearly $\phi$ maps the non-isotropic roots to non-isotropic roots. If $\phi(\epsilon_{1}-\epsilon_{2})=\delta_{i}-\delta_{j}+l_{1,2}\delta$ for some $l_{1,2}\in\mathbb{Z}$ we compose $\phi$ with the automorphism $\epsilon_{i}\leftrightarrow\delta_{i},~1\leq i\leq n$, to get a new isomorphism satisfying $\tilde{\phi}(\delta)=\phi(\delta)$ and $\tilde{\phi}(\epsilon_{1}-\epsilon_{2})=\epsilon_{i}-\epsilon_{j}+l_{1,2}\delta$. So without loss of generality $\phi(\epsilon_{1}-\epsilon_{2})=\epsilon_{i}-\epsilon_{j}+l_{1,2}\delta$ for some $l_{1,2}\in\mathbb{Z}$.

\par Composing the automorphism $\epsilon_{1}\leftrightarrow\epsilon_{i},\epsilon_{2}\leftrightarrow\epsilon_{j}$ with $\phi$ we get a new isomorphism such that $\tilde{\phi}(\epsilon_{1}-\epsilon_{2})=\epsilon_{1}-\epsilon_{2}+l_{1,2}\delta$ (and of course still $\tilde{\phi}(\delta)=\phi(\delta)$). So without loss of generality $\phi(\epsilon_{1}-\epsilon_{2})=\epsilon_{1}-\epsilon_{2}+l_{1,2}\delta$.

Clearly, now there exists some $l_{2,3}\in\mathbb{Z}$ such that $\phi(\epsilon_{2}-\epsilon_{3})=\epsilon_{2}-\epsilon_{j'}+l_{2,3}\delta$, and $j'\neq 1,2$. Composing the automorphism $\epsilon_{3}\leftrightarrow\epsilon_{j'}$ with $\phi$ we get a new isomorphism such that $\tilde{\phi}(\epsilon_{2}-\epsilon_{3})=\epsilon_{2}-\epsilon_{3}+l_{2,3}\delta$ (and of course still the previous relations hold). So without loss of generality $\phi(\epsilon_{2}-\epsilon_{3})=\epsilon_{2}-\epsilon_{3}+l_{2,3}\delta$.

\par Continuing in this process we may assume that for all $1\leq i\leq n$ there exist $l_{i,i+1}\in\mathbb{Z}$ such that: $$\phi(\epsilon_{i}-\epsilon_{i+1})=\epsilon_{i}-\epsilon_{i+1}+l_{i,i+1}\delta,$$
and similarly there exist $s_{i,i+1}\in\mathbb{Z}$ such that: $$\phi(\delta_{i}-\delta_{i+1})=\delta_{i}-\delta_{i+1}+s_{i,i+1}\delta.$$
\par Then, it easily follows that there exists some $\kappa\in\mathbb{Z}$ such that: $$\phi(\epsilon_{n+1}-\delta_{1})=\epsilon_{n+1}-\delta_{1}+\kappa\delta.$$

\par As $\phi$ is a linear map:

$$0=\phi(Id+q\delta)=\phi(Id)+\phi(q\delta)=$$
$$\phi((\epsilon_{1}-\epsilon_{2})+2(\epsilon_{2}-\epsilon_{3})+..+n(\epsilon_{n}-\epsilon_{n+1})+(n+1)(\epsilon_{n+1}-\delta_{1})+n(\delta_{1}-\delta_{2})+..+(\delta_{n}-\delta_{n+1}))+\phi(q\delta)=$$
$$=\phi(\epsilon_{1}-\epsilon_{2})+2\phi(\epsilon_{2}-\epsilon_{3})+..+n\phi(\epsilon_{n}-\epsilon_{n+1})+(n+1)\phi(\epsilon_{n+1}-\delta_{1})+$$
$$+n\phi(\delta_{1}-\delta_{2})+(n-1)\phi(\delta_{2}-\delta_{3})+..+\phi(\delta_{n}-\delta_{n+1})+q\phi(\delta)=$$
$$=(\epsilon_{1}-\epsilon_{2})+l_{1,2}\delta+2(\epsilon_{2}-\epsilon_{3})+2l_{2,3}\delta+...+n(\epsilon_{n}-\epsilon_{n+1})+nl_{n,n+1}\delta+$$
$$+(n+1)(\epsilon_{n+1}-\delta_{1})+(n+1)\kappa\delta+n(\delta_{1}-\delta_{2})+ns_{1,2}\delta+..+(\delta_{n}-\delta_{n+1})+s_{n,n+1}\delta\pm q\delta=$$
$$=Id+(l_{1,2}+2l_{2,3}+..+nl_{n,n+1}+(n+1)\kappa+ns_{1,2}+..+s_{n,n+1})\delta\pm q\delta\in Id+\mathbb{Z}\delta\pm q\delta,$$
and so $q\pm q'\in\mathbb{Z}$.
\par For every $q,q'\in\mathbb{C}$ such that $q-q'\in\mathbb{Z}$ (resp. $q+q'\in\mathbb{Z}$) the map defined by: $$\forall1\leq i\leq n+1:\epsilon_{i}\mapsto\epsilon_{i},\delta_{i}\mapsto\delta_{i};\delta\mapsto\delta ~(\text {reps.}~ \delta\mapsto-\delta),$$
is an isomorphism of $\tilde{A}(n,n)_{q}^{(1)}$ and $\tilde{A}(n,n)_{q'}^{(1)}$.
\end{proof}

\subsection{$cl(R)$ is not a GRS} \

Let $cl(R)\cong C(m,n)$ or $cl(R)\cong BC(m,n)$, i.e. $cl(R)$ has one of the forms (16)-(17). We deal with the following two cases separately:

\subsubsection{$cl(R)\cong C(1,1)$} \

By corollary \ref{c11ends} either $R\cong\tilde{A}(1,1)$, and so is the root system of the Lie superalgebra $gl(2|2)$, or $R$ is a rational quotient of the set of real roots of the Lie superalgebra $gl(2|2)^{(1)}$.

\subsubsection{$cl(R)\ncong C(1,1)$} \

If $cl(R)\cong C(m,n)$, it is easy to see that $cl^{-1}(\{\pm2\epsilon_{i},\pm\epsilon_{i}\pm\epsilon_{j}\})$ and $cl^{-1}(\{\pm2\delta_{i},\pm\delta_{i}\pm\delta_{j}\})$ are both AGRSs with $cl(cl^{-1}(\{\pm2\epsilon_{i},\pm\epsilon_{i}\pm\epsilon_{j}\}))\cong C_{m}$, $cl(cl^{-1}(\{\pm2\delta_{i},\pm\delta_{i}\pm\delta_{j}\}))\cong C_{n}$. By the analysis done above $cl^{-1}(\{\pm2\epsilon_{i},\pm\epsilon_{i}\pm\epsilon_{j}\})\cong A_{2m-1}^{(2)}$, $cl^{-1}(\{\pm2\delta_{i},\pm\delta_{i}\pm\delta_{j}\})\cong A_{2n-1}^{(2)}$. We fix $\epsilon'_{i},\delta'_{i}\in V$ such that $2\epsilon'_{i},\epsilon'_{i}-\epsilon'_{j},2\delta'_{i},\delta'_{i}-\delta'_{j}\in R$. Clearly, there exists some $q\in\mathbb{C}$ (arbitrary up to $\mathbb{Z}$) such that $\epsilon'_{1}-\delta'_{1}+q\delta\in R$. By applying the generalized Weyl group action for every $i,j$ also $\pm\epsilon'_{i}\pm\delta'_{j}+q\delta\in R$. As $k(\epsilon_{1}-\delta_{1})=1$ and both $\epsilon'_{1}-\delta'_{1}+q\delta$ and $\epsilon'_{1}-\delta'_{1}-q\delta$ are roots, one has $2q\in\mathbb{Z}$, so without loss of generality $q\in\{0,\frac{1}{2}\}$. If $q=0$ then both $\epsilon'_{1}-\delta'_{1}+(\epsilon'_{1}+\delta'_{1})$ and $\epsilon'_{1}-\delta'_{1}-(\epsilon'_{1}+\delta'_{1})$ are roots, a contradiction. So $cl^{-1}(\epsilon_{i}\pm\delta_{j})=\{\epsilon'_{i}\pm\delta'_{j}+(\frac{1}{2}+\mathbb{Z})\delta\}$, and so $R\cong A(2m-1,2n-1)^{(2)}$.

If $cl(R)\cong BC(m,n)$, it is easy to see that $cl^{-1}(\{\pm2\epsilon_{i},\pm2\delta_{j},\pm\epsilon_{i}\pm\epsilon_{j},\pm\delta_{i}\pm\delta_{j},\pm\epsilon_{i}\pm\delta_{j}\})\cong A(2m-1,2n-1)^{(2)}$. So we may find $\epsilon'_{i},\delta'_{i}\in V$ such that $2\epsilon'_{i},\epsilon'_{i}-\epsilon'_{j},2\delta'_{i},\delta'_{i}-\delta'_{j},\epsilon'_{i}-\delta'_{j}+\frac{k(\epsilon_{i}+\epsilon_{j})}{2}\delta\in R$. There exists $q\in\mathbb{C}$ (arbitrary up to $k(\epsilon_{i})\delta$) such that $\epsilon'_{i}+q\delta\in R$. As $r_{2\epsilon'_{i}}(\epsilon'_{i}+q\delta)=-\epsilon'_{i}+q\delta$ one has $2q\in k(\epsilon_{i})\mathbb{Z}$ and so may assume $q\in\{0,\frac{k(\epsilon_{i})}{2}\}$.

\par If $k(\epsilon_{i})=\frac{k(\epsilon_{i}+\epsilon_{j})}{2}$, then $cl^{-1}(\{\pm2\epsilon_{i},\pm\epsilon_{i},\pm\epsilon_{i}\pm\epsilon_{j}\})\cong A(2m,0)^{(4)}$, and $cl^{-1}(\{\pm2\delta_{i},\pm\delta_{i},\pm\delta_{i}\pm\delta_{j}\})\cong A(0,2n)^{(4)}$. So $q=0$ and $R\cong A(2m,2n)^{(4)}$.

\par If $k(\epsilon_{i})=k(\epsilon_{i}+\epsilon_{j})$, then if $q=0$: $r_{\epsilon'_{i}-\delta'_{j}+\frac{1}{2}\delta}(\epsilon'_{i})=\delta'_{j}+\frac{1}{2}\delta$. Doing the same if $q=\frac{1}{2}$ (recall that $k(\epsilon_{i})=1$ in all cases) we get that in any case exactly one of $cl^{-1}(\{\pm2\epsilon_{i},\pm\epsilon_{i},\pm\epsilon_{i}\pm\epsilon_{j}\})$ and $cl^{-1}(\{\pm2\delta_{i},\pm\delta_{i},\pm\delta_{i}\pm\delta_{j}\})$ is reduced.
If $cl^{-1}(\{\pm2\epsilon_{i},\pm\epsilon_{i},\pm\epsilon_{i}\pm\epsilon_{j}\})$ is reduced then $R\cong A(2m,2n-1)^{(2)}$, and if $cl^{-1}(\{\pm2\delta_{i},\pm\delta_{i},\pm\delta_{i}\pm\delta_{j}\})$ is reduced then $R\cong A(2n,2m-1)^{(2)}$.

\newpage

\section{Root system -- Lie structure correspondence}\label{coranaly}
The following Table \ref{table1} summarizes all known correspondences between different types of irreducible root systems and classes of simple/indecomposible Lie structures. In all cases $V$ is a finite dimensional complex vector space with a symmetric bilinear form $(-,-)$. $R\subseteq V$ is a subset satisfying conditions (0)-(2) of Definition \ref{defroot1} if $R$ is either a Euclidean system or a GRS (first and third rows in the table). $R\subseteq V$ is a subset satisfying conditions (1)-(2) of Definition \ref{defroot1} and conditions (0'),(3),(4) of Definition \ref{affinegrs} if $R$ is either an ARS or an AGRS (second, fourth, fifth and sixth rows in the table).

\begin{table}[h]
\begin{center}
  \begin{tabular}{| c | c | c | c | c |}
    \hline
    The root system & $R$ is & $dimR^{\perp}$ & $R$ contains & Corresponding \\
    ($R$) type & finite &  & isotropic roots & Lie structure \\ \hline \hline
    Euclidean (RS) & yes & 0 & no & Finite dimensional algebras  \\
     &  &  &  & or $osp(1|2n)$ (which is a basic  \\
     &  &  &  & classical superalgebra) \\ \hline
    Non-Euclidean ; Non-isotropic & no & 1 & no & Symmetrizable affine KM \\
     (ARS) &  &  & & superalgebras that contain \\
     &  &  & & no isotropic roots$^\sharp$ \\ \hline
    GRS & yes & 0 & yes & Basic classical superalgebras \\
    (that are not RS) &  &  &  & with a non-trivial \\
     &  &  &  & odd part except $psl(2|2)$  \\
     &  &  &  & and $osp(1|2n)$ \\ \hline
    AGRS & no & 1 & yes & Symmetrizable affine \\
    with $cl(R)\ncong A(n,n)$&  &  &  & KM superalgebras that \\
    (that are not ARS)&  &  &  & contain isotropic roots \\
    & & & & except $gl(n|n)^{(1)},n\geq1$ \\ \hline
    Finite AGRS & yes & 1 & yes & $gl(n|n),n\geq2$ \\ \hline
    Infinite AGRS & no & 1 & yes & Rational quotients of  \\
    with $cl(R)\cong A(n,n)$& & & & $gl(2|2)^{(1)}$ or infinite quotients \\
    & & & & of $gl(n|n)^{(1)},n\geq3$ \\
    \hline
  \end{tabular}
    \caption{All known irreducible root systems -- simple/indecomposible Lie structures correspondences.}\label{table1}
\end{center}
\end{table}

\thanks{$^\sharp$
$B(0,n)^{(1)},C(n+1)^{(2)},A_{2n}^{(2)},A(0,2n-1)^{(2)}$ and $A(0,2n)^{(4)}$.

\begin{rem}{}
Assume that $V$ is a finite dimensional complex vector space with a non-degenerate symmetric bilinear form $(-,-)$, and that $R\subset V$ is an RS. Define a new form on $V$ by scaling $(-,-)$ by a non-zero scalar, i.e. $\widetilde{(-,-)}:=a(-,-)$ for some $a\in\mathbb{C}^{*}$. One easily sees that $R\subset V$ is an RS also when considering $V$ as a finite dimensional complex vector space with the non-degenerate symmetric bilinear form $\widetilde{(-,-)}$. Thus, the Euclidean systems indeed lie in the Euclidean space $span_{\mathbb{R}}\{R\}$, but not any restricted non-degenerate symmetric bilinear form from $V=span_{\mathbb{C}}\{R\}$ to $span_{\mathbb{R}}\{R\}$ gives a Euclidean structure. For an arbitrary non-degenerate symmetric bilinear form on $V$, a Euclidean structure is obtained by scaling the restricted form by a proper scalar. For example, $R=\{\pm\delta_{1},\pm\delta_{2},\pm\delta_{1}\pm\delta_{2}\}$ with the form $(\delta_{i},\delta_{j})=-\delta_{i,j}$: $R\cong B_{2}$ and in order to get a Euclidean structure we can scale the restricted form by $a=-1$.
\end{rem}

\section{Our approach as a generalization of Macdonald's}\label{macrem}

Ian G. Macdonald (in \cite{M1}) actually classified all root systems that do not contain isotropic roots. These are the systems of the first two types in Table \ref{table1} (RS and ARS).
\par In this paper we used some other definitions, that are more appropriate when applying the theory to the super case (i.e. when introducing isotropic roots). In the following we show that our definitions do generalize Macdonald's axioms (AR1)-(AR4) (see Section 2 in \cite{M1}).
\par First, one notices Macdonald works in a real vector space rather than complex. Let $R$ be a (possibly weak) GRS in a complex space, with no isotropic roots. We may scale one root $\alpha$ to satisfy $(\alpha,\alpha)=1$. Now, as for all $\beta\in R$ with $(\alpha,\beta)\neq0$ we have $\frac{2(\alpha,\beta)}{(\alpha,\alpha)}\in\mathbb{Z}$ we have $(\alpha,\beta)\in\mathbb{Q}$. As also $\frac{2(\alpha,\beta)}{(\beta,\beta)}\in\mathbb{Z}$ we have $(\beta,\beta)\in\mathbb{Q}$. Finally, as $R$ is irreducible we have $(\gamma,\gamma)\in\mathbb{Q}$ for all $\gamma\in R$ so $R$ spans also a real (even rational) vector space. Due to Serre (see \cite{Serre1}) this real space has a Euclidean structure. This argument does not hold in the presence of isotropic roots (unless the set of non-isotropic roots is irreducible) -- for example $D(2,1;\lambda)$ has 6 non-isotropic roots (3 copies of $A_{1}$) and it does not span a real space if $\lambda\not\in\mathbb{R}$.

\par In this context we note, that condition (4) in \ref{affinegrs} may be replaced by the following "discreteness condition" (4'):
\par (4') For every $\alpha\in R$ and $\delta\in ker(-,-)$: $|\{\alpha+s\delta\}_{0\leq s\leq1}\cap R|<\infty$.
\par Macdonald's proper action of the Weyl group condition (AR4) corresponds to this discreteness condition, and over $\mathbb{R}$ changing our condition (4) to (4') would lead to an equivalent definition of an AGRS whenever $cl(R)\ncong C(1,1)$ (just apply reasoning as in the proofs of Lemmas \ref{nonisostr} and \ref{lem3}). When $cl(R)\cong C(1,1)$ (4') also allows other systems that are not allowed by (4), such as $C(1,1)^{q}$ for any $q\in\mathbb{C}\setminus\mathbb{Q}$ satisfying $0<|real(q)|<1$ (these systems correspond to infinite non-rational quotients of $gl(2|2)^{(1)}$). Anyhow, in the absence of isotropic roots (4) and (4') are equivalent.

\par One also notes that Macdonald has no finiteness condition. By Lemma \ref{finitenessofgrs2} below, in a Euclidean space (which is the setting of Macdonald) finiteness follows from the other conditions.

\subsection{}\label{finitenessofgrs2}
\begin{lem}{}
Let $R$ be a set satisfying all conditions of a (possibly weak) GRS in $V$, except being finite, and $V$ is a Euclidean space (i.e. a real space and not complex, with the form $(-,-)$ is positive definite). Then $R$ is finite.
\end{lem}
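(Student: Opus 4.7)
The proof plan proceeds by exploiting two features of the Euclidean setting: positive-definiteness rules out any isotropic root, and Cauchy--Schwarz bounds the integer Cartan numbers.

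First I would observe that since $(-,-)$ is positive definite and $0 \notin R$, every root $\alpha \in R$ satisfies $(\alpha,\alpha) > 0$. Hence axiom (1) applies to every ordered pair $(\alpha,\beta) \in R \times R$: both $n(\alpha,\beta) := \frac{2(\alpha,\beta)}{(\alpha,\alpha)}$ and $n(\beta,\alpha) := \frac{2(\beta,\alpha)}{(\beta,\beta)}$ are integers. By Cauchy--Schwarz one has $n(\alpha,\beta)\, n(\beta,\alpha) = \frac{4(\alpha,\beta)^2}{(\alpha,\alpha)(\beta,\beta)} \in \{0,1,2,3,4\}$, and the value $4$ occurs only when $\alpha$ and $\beta$ are parallel. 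Note that axiom (2)/(2') is vacuous in this setting, so the structure is essentially that of a classical root system without isotropic roots.

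Next, since $R$ spans $V$ by axiom (0), I would extract a basis $\{\alpha_1, \ldots, \alpha_n\} \subset R$ of $V$. For a root $\beta \in R$ that is \emph{not} a real scalar multiple of any $\alpha_i$, Cauchy--Schwarz is strict, so for every $i$ the product $n(\alpha_i, \beta)\, n(\beta, \alpha_i)$ lies in $\{0,1,2,3\}$, which forces $|n(\alpha_i, \beta)| \leq 3$. Consequently, the inner product $(\alpha_i, \beta) = \tfrac{(\alpha_i,\alpha_i)}{2} n(\alpha_i, \beta)$ takes only finitely many values as $i$ ranges over $1,\ldots,n$. Because $(-,-)$ is non-degenerate on $V$, the Gram matrix $((\alpha_i,\alpha_j))_{i,j}$ is invertible, and therefore the tuple $((\alpha_1,\beta), \ldots, (\alpha_n,\beta))$ determines $\beta$ uniquely. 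This yields only finitely many non-parallel roots $\beta$.

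Finally I would handle the parallel case: if $\beta = c\alpha_i$ for some $c \in \mathbb{R}^*$, then $n(\alpha_i, \beta) = 2c \in \mathbb{Z}$ and $n(\beta, \alpha_i) = 2/c \in \mathbb{Z}$, forcing $c \in \{\pm \tfrac12, \pm 1, \pm 2\}$; there are at most $8n$ such roots in total. Combining the two cases, $R$ is finite. The argument is essentially routine once one notices that the Euclidean hypothesis eliminates all isotropic phenomena, so I do not anticipate a serious obstacle; the only care required is to separate the parallel from the non-parallel roots before applying Cauchy--Schwarz.
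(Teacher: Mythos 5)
Your proposal is correct and follows essentially the same route as the paper: extract a basis of roots, use Cauchy--Schwarz to bound the integer Cartan numbers $\frac{2(\alpha_i,\beta)}{(\alpha_i,\alpha_i)}$, and conclude that the finitely many possible tuples of these integers determine $\beta$ via non-degeneracy of the Gram matrix. The only (harmless) difference is that you split off the parallel case explicitly, whereas the paper absorbs it into the uniform bound $|n(\alpha_i,\beta)|\leq 4$.
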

\begin{proof}
Let $\{\alpha_{i}\}_{i=1}^{n}$ be a basis of $V$ consists of roots from $R$. Let $\beta\in R$. For every $i=1,2,...,n$ the two roots $\alpha_{i},\beta$ span either a 1 or a 2 dimensional Euclidean space, so we have: $$\frac{2(\alpha_{i},\beta)}{(\alpha_{i},\alpha_{i})},\frac{2(\alpha_{i},\beta)}{(\beta,\beta)}\in\mathbb{Z}, |\frac{(\alpha_{i},\beta)}{(\alpha_{i},\alpha_{i})}\frac{(\alpha_{i},\beta)}{(\beta,\beta)}|=|\cos^{2}(\theta)|\leq1,$$ when $\theta$ is the angle between $\alpha_{i}$ and $\beta$, and so $\frac{2(\alpha_{i},\beta)}{(\alpha_{i},\alpha_{i})}\in\{0,\pm1,\pm2,\pm3\pm4\}$. Clearly the set $\{\frac{2(\alpha_{i},\beta)}{(\alpha_{i},\alpha_{i})}\}_{i=1}^{n}$ completely determines $\beta$. As there are only finitely many options for this set, there are only finitely many roots in $R$, thus $R$ is finite.
\end{proof}

\section{General AGRSs}\label{simple}

\par AGRSs that are not irreducible arise naturally, for instance when taking the non-isotropic part of an irreducible AGRS. Above we classified all irreducible AGRSs. In the following Theorem \ref{finunion} we show that any AGRS $R$ is a disjoint union of finitely many irreducible AGRSs $I_{i}$ and GRSs $F_{i}$, and that the space spanned by $R$ is isomorphic to a direct sum of the spaces spanned by all $F_{i}$, the spaces spanned by the weak GRSs $cl(I_{i})$ and a one dimensional space (corresponding to $ker(-,-)$).

\subsection{}\label{finunion}
\begin{thm}{}
Let $V$ be a complex vector space with a symmetric bilinear form $(-,-)$, and let $R\subset V$ be an AGRS, then:
$$R=\sqcup_{i\in J_{1}} I_{i}\sqcup_{i\in J_{2}} F_{i},$$
$$V\cong\mathbb{C}\oplus_{i\in J_{1}} (V_{i}/ker(-,-)_{i}) \oplus_{i\in J_{2}}V_{i},$$
when $|J_{1}\cup J_{2}|<\infty,J_{1}\cap J_{2}=\emptyset,J_{1}\neq\emptyset$, for every $i$: $V_{i}$ is a complex vector space with a symmetric bilinear form $(-,-)_{i}$, $I_{i}\subset V_{i}$ is an irreducible AGRS, and $F_{i}\subset V_{i}$ is an irreducible GRS.
\end{thm}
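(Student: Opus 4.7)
The plan is to decompose $R$ via the natural connectivity relation and verify that each piece satisfies the appropriate axioms. Let $\sim$ denote the transitive closure on $R$ of $\alpha \sim \beta \iff (\alpha,\beta) \neq 0$, and let $\{R^i\}_{i \in J}$ be its equivalence classes, $V_i := \spn_{\mathbb{C}}(R^i)$. By construction $R = \sqcup_i R^i$ with $(R^i, R^j) = 0$, and hence $(V_i, V_j) = 0$, for all $i \neq j$; each $R^i$ is irreducible since any orthogonal splitting would only refine $\sim$.

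The first technical step is to show $|J| < \infty$ by matching components of $R$ with those of $cl(R)$. The key lemma is that any $\alpha, \alpha' \in R$ with $cl(\alpha) = cl(\alpha')$ lie in the same $\sim$-class: writing $\alpha' = \alpha + k\delta_\alpha$, if $\alpha$ is non-isotropic then $(\alpha, \alpha') = (\alpha,\alpha) \neq 0$; if $\alpha$ is isotropic, it cannot be orthogonal to all of $R$ (else $\alpha \in \ker(-,-)$, violating (0')), so any $\beta \in R$ with $(\alpha, \beta) \neq 0$ satisfies $(\alpha', \beta) = (\alpha, \beta) \neq 0$ and yields the chain $\alpha \sim \beta \sim \alpha'$. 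Conversely, non-orthogonality descends through $cl$, so $i \mapsto cl(R^i)$ is a bijection between the components of $R$ and those of the weak GRS $cl(R)$ (Proposition \ref{prop1}). Finiteness of the latter follows from axiom (3), so $|J| < \infty$.

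Next I would classify each component. The kernel $N_i := V_i \cap V_i^\perp$ of the restricted form is contained in $\ker(-,-)_V = \mathbb{C}\delta$, because any $v \in N_i$ is orthogonal to $V_i$ by definition and to every $V_j$ with $j \neq i$ by pairwise orthogonality, hence to all of $V$. So $\dim N_i \in \{0, 1\}$, giving a partition $J = J_1 \sqcup J_2$. For $i \in J_2$ the form on $V_i$ is nondegenerate and $cl$ is injective on $V_i$, so $R^i$ is in bijection with the finite set $cl(R^i) \subset cl(R)$, and axioms (0)--(2) transfer to produce an irreducible finite GRS $F_i := R^i$. For $i \in J_1$, one has $\delta \in V_i$ and all five axioms (0'), (1), (2), (3), (4) transfer to $R^i \subset V_i$ (axiom (4) uses the same $\delta_\alpha \in \mathbb{C}\delta$ that works for $R$), producing an irreducible AGRS $I_i := R^i$. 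Since $\delta \in V = \sum V_i$ and $\delta \notin V_j$ for $j \in J_2$, some $V_i$ must contain $\delta$, so $J_1 \neq \emptyset$.

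For the vector space decomposition, choose for each $i \in J_1$ a complement $V_i' \subset V_i$ of $\mathbb{C}\delta$, so that $V_i' \cong V_i/\ker(-,-)_i$ and the restricted form is nondegenerate on $V_i'$. The subspaces $\{V_i'\}_{i \in J_1} \cup \{V_j\}_{j \in J_2}$ are pairwise orthogonal with nondegenerate restricted forms; testing against elements of each factor and using nondegeneracy shows first that their sum is direct, and then that $\mathbb{C}\delta$ meets this sum trivially. Combining gives $V = \mathbb{C}\delta \oplus \bigoplus_{i \in J_1} V_i' \oplus \bigoplus_{j \in J_2} V_j$, the claimed decomposition. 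The main obstacle is the fiber argument in paragraph two: establishing that $cl^{-1}(\gamma) \cap R$ lies in a single $\sim$-class for isotropic $\gamma$ relies essentially on axiom (0') to rule out isolated isotropic roots, and this is the point where AGRS-specific structure (rather than general linear algebra) genuinely enters the proof.
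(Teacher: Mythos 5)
Your proof is correct and takes essentially the same route as the paper: in both arguments the decomposition is ultimately indexed by the irreducible components of the finite weak GRS $cl(R)$ (whence $|J_1\cup J_2|<\infty$), and $J_1$ versus $J_2$ is distinguished by whether the form degenerates on the span of a component. You work downward from $R$ (connectivity classes of $R$, matched with components of $cl(R)$ via your fiber-connectedness lemma) while the paper works upward from a decomposition of $cl(R)=\sqcup S_i$ into irreducible weak GRSs and takes $cl^{-1}(S_i)$ as the pieces; your fiber lemma makes explicit the irreducibility of each piece, which the paper leaves implicit.
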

\begin{proof}
Let $R\subset V$ be an AGRS. As $cl(R)$ is a weak GRS it is a finite union of irreducible weak GRSs: $cl(R)=\sqcup_{i=1}^{n} S_{i}$ when $S_{i}\subset cl(R)$. We define $V'_{i}:=span_{\mathbb{C}}\{S_{i}\}$. For every $i$ we choose a basis $\{\tilde{\alpha}_{j}^{i}\}_{j=1}^{d_{i}}$ of $V'_{i}$, containing elements of $S_{i}$ only (clearly such basis exists), and choose roots $\{\alpha_{j}^{i}\}_{j=1}^{d_{i}}$ in $R$ such that $cl(\alpha_{j}^{i})=\tilde{\alpha}_{j}^{i}$. We identify $\{\alpha_{j}^{i}\}_{j=1}^{d_{i}}$ with $S_{i}$, and so $\{\{\alpha_{j}^{i}\}_{j=1}^{d_{i}}\}_{i=1}^{n}$ with $cl(R)$. Denote $V_{i}:=span_{\mathbb{C}}\{\cup_{j=1}^{d_{i}}(cl^{-1}(\alpha_{j}^{i}))\}$ and $(-,-)_{i}:=(-,-)|_{V_{i}}$. Clearly, $span_{\mathbb{C}}\{\{\alpha_{j}^{i}\}_{j=1}^{d_{i}}\}_{i=1}^{n}$ is a linearly independent set in $R$, and $ker(-,-)\cap span_{\mathbb{C}}\{\{\alpha_{j}^{i}\}_{j=1}^{d_{i}}\}_{i=1}^{n}=\emptyset$. Recall $dim(V)-dim(cl(R))=dim(ker(-,-))=1$, and so $V=ker(-,-)\oplus_{i=1}^{n}(span_{\mathbb{C}}\{\alpha_{j}^{i}\}_{j=1}^{d_{i}}\})$. We define $J_{1}=\{1\leq i \leq n|~cl^{-1}(S_{i})~\text {is not a GRS}\}$, and $J_{2}=\{1,2,...,n\}\setminus J_{1}$. Clearly $|J_{1}\cup J_{2}|<\infty,J_{1}\cap J_{2}=\emptyset$ and as the form $(-,-)$ is degenerate $J_{1}\neq\emptyset$.
For all $i\in J_{1}$: $V_{i}/ker(-,-)\cong V'_{i}\cong span_{\mathbb{C}}\{\alpha_{j}^{i}\}_{j=1}^{d_{i}}$ and for all $i\in J_{2}$: $V_{i}\cong span_{\mathbb{C}}\{\alpha_{j}^{i}\}_{j=1}^{d_{i}}$, and so $V$ has the required form.
\end{proof}

\section{Further remarks}\label{further}

\subsection{Even and odd roots} \

One notices that in our definition of AGRS, as well as in Serganova's definition of GRS, a-priori $R$ is not a disjoint union of two parts $R_{\ol{0}}$ and $R_{\ol{1}}$, that represent the even and odd parts of the corresponding superalgebra respectively. It is possible to get this partition, by defining $R_{\ol{1}}=\{\alpha\in R|(\alpha,\alpha)=0 \text { or } 2\alpha\in R\}$, and $R_{\ol{0}}=R\setminus R_{\ol{1}}$.
\par Alternatively, we may define parity in the following way: we call a map $f:R\to\mathbb{Z}_{2}$ {\em linear} if for all $\alpha,\beta\in R$ such that $\alpha+\beta\in R: f(\alpha)+f(\beta)=f(\alpha+\beta)$. A linear map $f$ that maps all isotropic roots to $\ol{1}$ is called a {\em parity function}, and we define for such $f$ odd (resp. even) roots by $R_{\ol{1}}^{f}=f^{-1}(\ol{1})$ ($R_{\ol{0}}^{f}=f^{-1}(\ol{0})$). Clearly the first definition is a special case of the second (with $f(\alpha)=\ol{1}$ if $(\alpha,\alpha)=0 \text { or } 2\alpha\in R$ and $f(\alpha)=\ol{0}$ otherwise), but note that the second is not always unique.
\par As an example, take some parity function $f$ of $A(m,n)$. By definition $f(\pm(\epsilon_{i}-\delta_{j}))=\ol{1}$. As $f$ is linear $f(\epsilon_{i}-\epsilon_{k})=f((\epsilon_{i}-\delta_{j})+(\delta_{j}-\epsilon_{k}))=f(\epsilon_{i}-\delta_{j})+f(\delta_{j}-\epsilon_{k})=\ol{1}+\ol{1}=\ol{0}$, and the same for $\delta_{i}-\delta_{j}$. Thus $f$ is uniquely defined (and of course corresponds to the first definition). On the other hand we have two different parity functions of $B(0,n)$. The first is the one corresponding to the first definition ($\delta_{i}\mapsto\ol{1},2\delta_{i},\delta_{i}\pm\delta_{j}\mapsto\ol{0}$) and the second is the trivial map ($B(0,n)\mapsto\ol{0}$). One easily sees that these are the only parity functions of $B(0,n)$.

\subsection{Root subsystems}
\subsubsection{}
\begin{defn}{}
Let $R\subset V$ be a root system (of any type). We call a subset $S\subseteq R$ a {\em root subsystem}, if $S\subset span_{\mathbb{C}}\{S\}$ is a root system (of any type).
\end{defn}
\par Note that Theorem \ref{thm1} implies:
\subsubsection{}
\begin{cor}{}
Let $R$ be the set of real roots of a root system of a symmetrizable affine indecomposable Kac-Moody superalgebra, and let $S\subset R$ be an irreducible root subsystem. Then $S$ is itself one of the following:
\par (1) The set of real roots of either a finite dimensional Lie superalgebra (that is simple unless it is $gl(n|n),~n\geq2$), or a symmetrizable affine indecomposable Kac-Moody superalgebra.
\par (2) A rational quotient of $gl(2|2)^{(1)}$.
\par (3) An infinite quotient of $gl(n|n)^{(1)},~n\geq3$.
\end{cor}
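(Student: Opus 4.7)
The plan is to derive the corollary as an almost immediate consequence of Theorem \ref{thm1}, supplemented by Serganova's classification of irreducible GRSs (Corollary \ref{weakclass}) and the finite AGRS classification (Lemma \ref{lem2}). Under the hypothesis, $R$ is an irreducible AGRS (modulo the $gl(n|n)^{(1)}$ case, whose real roots are handled via their AGRS quotients), and any irreducible root subsystem $S\subset R$ inside $V_S:=\spn_{\CC}(S)$ is, by the definition of subsystem, itself a root system of one of the types introduced in the paper.

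The key step, and the one I would treat as the main obstacle, is to exclude $S$ from being an irreducible weak GRS that is not a GRS. By Corollary \ref{weakclass}, such an $S$ must be $C(m,n)$ or $BC(m,n)$; both contain isotropic roots $\alpha=\epsilon_i+\delta_j$ and $\beta=\epsilon_i-\delta_j$ with $\alpha+\beta=2\epsilon_i$ and $\alpha-\beta=2\delta_j$ both present. Since these four roots then lie in $R$ as well, this contradicts the uniqueness of odd reflections in $R$ (Lemma 1.11 of \cite{VGRS}, cited just after Definition \ref{affinegrs}), which forces exactly one of $\beta\pm\alpha$ to belong to $R$ whenever $\alpha$ is isotropic and $(\alpha,\beta)\neq 0$.

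Having promoted $S$ to a genuine irreducible GRS or AGRS, the three cases of the corollary follow directly: a finite GRS lies in Serganova's list and is the root system of a finite-dimensional simple Lie superalgebra; a finite AGRS is $\tilde{A}(n,n)$ by Lemma \ref{lem2} and hence the root system of $gl(n+1|n+1)$, which is exactly the $gl(n|n)$ exception in case (1); and an infinite AGRS is placed into cases (1), (2), or (3) by Theorem \ref{thm1} verbatim. Everything after the weak-GRS exclusion is a mechanical invocation of already-established classifications.
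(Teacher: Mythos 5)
Your proposal is correct and matches the paper's intent: the paper offers no written proof beyond ``Note that Theorem \ref{thm1} implies,'' and your deduction --- reduce everything to the classifications of irreducible GRSs and AGRSs, the only genuine content being the exclusion of the weak-GRS types $C(m,n)$ and $BC(m,n)$ via the uniqueness of odd reflections --- is exactly the argument the paper itself uses for that exclusion inside the proof of Lemma \ref{lem2}. The one point to tidy is the case $R=\tilde{A}(n,n)^{(1)}$ (the real roots of $gl(n+1|n+1)^{(1)}$), where $R$ is not itself an AGRS and axiom (2) cannot be cited verbatim, so the fact that at most one of $\beta\pm\alpha$ lies in $R$ for $\alpha$ isotropic must be (and easily is) verified directly.
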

\par When dealing with subsystems we notice two interesting facts about parity of roots: let us consider parity as in the first definition (as said, then it is unique). First, we see that parity is not invariant with respect to the restriction from a root system to a subsystem. For example, all roots of the form $\pm\delta_{i}$ are odd in $B(0,n)$, but when one thinks about these as roots in the subsystem $B_{n}\subset B(0,n)$ they are all even. Second, in most cases two roots $\alpha,\beta$ with $cl(\alpha)=cl(\beta)$ have the same parity. We see a nice example that it is not so, when taking $A(0,2n)^{(4)}$ as a root subsystem of $B(m,n)^{(1)}$. As before some odd roots in $B(m,n)^{(1)}$ (the roots $\{\delta_{i}+2s\delta\}_{s\in\mathbb{Z}}$) become even in $A(0,2n)^{(4)}$. Moreover, in $A(0,2n)^{(4)}$ all roots $\delta_{i}$ are even, but $\delta_{i}+\delta$ are odd. In $B(m,n)^{(1)}$ two roots $\alpha,\beta$ with $cl(\alpha)=cl(\beta)$ have the same parity.

\subsection{The missing basic classical Lie superalgebra in Serganova's classification}\label{sergexception}
The only absent set of real roots of the root systems of basic classical Lie superalgebras from Serganova's classification of GRSs is $A(1,1)$, which corresponds to the superalgebra $psl(2|2)=sl(2|2)/<\mathbb{1}_{4}>$. Interestingly, $A(1,1)\cong C(1,1)\cong cl(\tilde{A}(1,1))$, i.e. it is the image of the finite AGRS $\tilde{A}(1,1)$ under the quotient by the kernel of the bilinear form. This is not a GRS, but it is a weak GRS. Note that unlike the cases $cl:\tilde{A}(n,n)\to A(n,n),~n\geq2$, the map $cl$ is not bijective: the pre-image of any non-isotropic root (respectively isotropic root) has cardinality 1 (2). In that context see Serganova's third alternative definition for GRS (not involving inner product) and Theorem 7.2 in \cite{VGRS}.

\subsection{Observation on Kashiwara-Tanisaki's approach} \

M. Kashiwara and T. Tanisaki proved (see \cite{KT1}) that if $\Delta$ is the set of real roots of a semi-simple finite or affine Lie algebra, then any subsystem $\Delta'\subseteq\Delta$ is itself the set of real roots of a semi-simple finite or affine Lie algebra (this result easily follows from Macdonald's classification). Their approach is based on applying the Weyl group action on $\Delta'$.
\par In the context of Lie superalgebras, the classical Weyl group is generated by all even reflections: $W:=<r_{\alpha}>_{\alpha\in\Delta_{\ol{0}}}$ (that is exactly the group generated by all non-odd reflections). Our first idea was to generalize \cite{KT1}'s approach by applying the action of $\widetilde{W}$ on $\Delta'$. This generalization attempt was unsuccessful, as the action of this group on the set $\Delta$ does not map root subsystems to root subsystems, i.e. $\Delta'\subseteq\Delta$ is a root subsystem and $\omega\in\widetilde{W}$ does not imply $\omega(\Delta')\subseteq\Delta$ is a root subsystem. \par A counter example can be found even in finite systems: take $B_{2}:=\{\pm\epsilon_{1},\pm\epsilon_{2},\pm\epsilon_{1}\pm\epsilon_{2}\}\subseteq B(2,1)$ and take $\alpha=\epsilon_{2}-\delta_{1}\in B(2,1)$. Then $r_{\alpha}(\epsilon_{1}\pm\epsilon_{2})=\epsilon_{1}\pm\delta_1$, but
$r_{\epsilon_{1}-\delta_{1}}(\epsilon_{1}+\delta_{1})=2\delta_{1}\not\in r_{\alpha}(B_{2})=\{\pm\epsilon_{1},\pm\delta_{1},\pm\epsilon_{1}\pm\delta_1\}$, and so $r_{\alpha}(B_{2})$ is not a root system of any type.

\subsection{Further generalizations} \

Relaxing the conditions on $V$ and $R$ may lead to classifying further types of systems corresponding to other Lie structures. The two natural guesses would be either allowing $dimR^{\perp}\geq2$ or allowing accumulation points in the pre-image $cl^{-1}(\alpha)$ for some (which would probably imply for all) $\alpha\in R$.


\begin{thebibliography}{MMM}

\bibitem[C]{C1} E.~Cartan, a) {\em Sur la structure des groupes de transformations finis et continus} (Th\`{e}se), Paris (Nony), 1894 (={\OE}uvres compl\`{e}tes, Paris (Gauthier-Villars), 1952, t. I$_{1}$, 137--287); b) {\em Sur la r\`{e}duction \^{a} sa forme canonique de la structure d'un groupe de transformations fini et continu}, Amer. J. Math., 18 (1896), 1--46 (={\OE}uvres compl\`{e}tes, t. I$_{1}$, 293--253); c) {\em Le principe de dualit\`{e} et la th\`{e}orie des groupes simples et semi-simples}, Bull. Sci. Math., 49 (1925), 361--374 (={\OE}uvres compl\`{e}tes, t. I$_{1}$, 555--568); d) {\em La g\`{e}om\`{e}trie des groupes simples}, Ann. di Math. (4) 4 (1927), 209--256 (={\OE}uvres compl\`{e}tes, t. I$_{2}$, 793--840); e) {\em Sur certaines formes riemanniennes remarquables des g\`{e}om\`{e}ties \^{a} groupe fondamental simlpe}, Ann. Ec. Norm. Sup. (3), 44 (1927), 345--467 (={\OE}uvres compl\`{e}tes, t. I$_{2}$, 867--989); f) {\em Compl\'{e}ment au m\'{e}moire Sur la g\'{e}om\'{e}trie des groupes simples}, Ann. di Math. (4) 5 (1928), 253--260 (={\OE}uvres compl\`{e}tes, t. I$_{2}$, 1003--1010).

\bibitem[Ka1]{K2} V.~G.~Kac, {\em Infinite-dimensional Lie algebras},
Third edition, Cambridge University Press, 1990.

\bibitem[Ka2]{K1} V.~G.~Kac, {\em Lie superalgebras},  Adv.
    in Math., \textbf{26}, no.~1 (1977), 8--96.

\bibitem[Ki]{Killing1} W.Killing, {\em Die Zusammensetzung der stetigen endlichen Transformations-gruppen}, 1) Math. Ann., 31 (1888), 252--290; 2) ibid. 33 (1889), 1--48; c) ibid. 34 (1889), 57--122; 4) ibid. 36 (1890), 161--189.

\bibitem[KT]{KT1}  M.~Kashiwara, T.~Tanisaki, {\em Characters of the irreducible modules with non-critical
highest weights over affine Lie algebras}, in  Representations and quantizations (Shanghai, 1998),
China High. Educ. Press, Beijing, 2000.

\bibitem[Ma]{M1} I.G.Macdonald, {\em Affine Root Systems and Dedekind's $\eta$-Function},
Inventiones math. 15 (1972), 91-143.

\bibitem[MP]{moody} R.V.~Moody and A.~Pianzola,, {\em Lie Algebras with Triangular Decompositions}, Canadian Mathematical Society series of monographs and advanced texts, A Wiley-Interscience Publication, John Wiley and Sons (1995).

\bibitem[R]{R1} S.~Reif, {\em Denominator Identity for Twisted Affine Lie Superalgebras}, International Mathematics Research Notices, rnt053 (2013), 4146--4178.

\bibitem[Serg]{VGRS} V.~Serganova, {\em On generalizations of root systems},
Comm. in Algebra {\bf 24 (13)}
(1996) 4281--4299.

\bibitem[Serr]{Serre1} J.P.~Serre, {\em Lie algebras and Lie groups}, Springer (1964).

\bibitem[V]{V1} B.L.~van der Waerden, {\em Die Klassifikation der einfachen Lieschen Gruppen}, Mathematische Zeitschrift, no. 1 (1932), 446-–462.






\end{thebibliography}
\end{document}